\numberwithin{equation}{section}
\newtheorem{theorem}{Theorem}[section]
\newtheorem{lemma}[theorem]{Lemma}
\newtheorem{corollary}[theorem]{Corollary}
\newtheorem{cor}[theorem]{Corollary}
\theoremstyle{definition}\newtheorem{definition}[theorem]{Definition}
\newtheorem{proposition}[theorem]{Proposition}
\theoremstyle{definition}
\theoremstyle{definition}
\theoremstyle{definition}\newtheorem{remark}[theorem]{Remark}
\theoremstyle{definition}
\newcommand{\al}{\alpha}
\newcommand{\ga}{\gamma}
\newcommand{\Ga}{\Gamma}
\newcommand{\del}{\delta}
\newcommand{\Del}{\Delta}
\newcommand{\Lam}{\Lambda}
\newcommand{\Om}{\Omega}
\newcommand{\vphi}{\varphi}
\newcommand{\vre}{\varepsilon}
    \newcommand{\Xn}{{\mathcal{L}_n}}
\newcommand\crly[1]{\mathscr{#1}}
\newcommand{\sm}{\smallsetminus}
\newcommand{\df}{{\, \stackrel{\mathrm{def}}{=}\, }}
\newcommand\Name[1]{\label{#1}{\ifdraft{\sn
      [#1]}\else\ignorespaces\fi}}
\newcommand\eq[2]{{\ifdraft{\ \tt
      [#1]}\else\ignorespaces\fi}\begin{equation}\label{#1}{#2}\end{equation}} 
\newcommand {\equ}[1]{\eqref{#1}}
\newcommand{\BB}{{\mathcal{B}}}
\newcommand\Vol{\mathrm{Vol}}
\newcommand\covrad{\mathrm{covrad}}
\newcommand\conv{\mathrm{conv}}
\newcommand\supp{\mathrm{supp}}
\newcommand{\cA}{\mathcal{A}}
\newcommand{\cB}{\mathcal{B}}
\newcommand{\cD}{\mathcal{D}}
\newcommand{\cE}{\mathcal{E}}
\newcommand{\cI}{\mathcal{I}}
\newcommand{\cL}{\mathcal{L}}
\newcommand{\cM}{\mathcal{M}}
\newcommand{\cS}{\mathcal{S}}
\newcommand{\cU}{\mathcal{U}}
\newcommand{\cV}{\mathcal{V}}
\newcommand{\cW}{\mathcal{W}}
\newcommand{\cY}{\mathcal{Y}}
\newcommand{\spa}{{\rm span}}
\newcommand{\LL}{\mathcal{L}}
\newcommand{\bR}{\mathbb{R}}
\newcommand{\bZ}{\mathbb{Z}}
\newcommand{\R}{{\mathbb{R}}}
\newcommand{\Z}{{\mathbb{Z}}}
\newcommand {\ignore}[1]  {}
\newcommand{\SL}{\operatorname{SL}}
\newcommand{\defi}{\overset{\on{def}}{=}}
\newcommand\norm[1]{\left\|#1\right\|}
\newcommand\set[1]{\left\{#1\right\}}
\newcommand\pa[1]{\left(#1\right)}
\newcommand{\E}{\mathbf{e}}
\newcommand\av[1]{\left|#1\right|}
\newcommand\on[1]{\operatorname{#1}}
\newcommand\tb[1]{\textbf{#1}}
\newcommand\mat[1]{\pa{\begin{matrix}#1\end{matrix}}}
\newcommand\br[1]{\left[#1\right]}
\newcommand\smallmat[1]{\pa{\begin{smallmatrix}#1\end{smallmatrix}}}
\newcommand{\lra}{\longrightarrow}
\newcommand{\onto}{\xymatrix{\ar@{>>}[r]&}}
\newcommand{\da}[4]{\xymatrix{#1 \ar@<.5ex>[r]^{#2} \ar@<-.5ex>[r]_{#3} & #4}}
\newif\ifdraft\drafttrue
\font\sn = cmssi8 scaled \magstep0
\newcommand{\nerve}{{\rm Nerve}}
\newcommand{\order }{{\mathrm {ord}}}
\newcommand{\Lb}{{\mathrm {Leb}}}
\newcommand{\mesh}{{\mathrm {mesh}}}
\newcommand{\asdim}{{\rm asdim}}
\begin{document}
\title{On stable lattices and the diagonal group}
\author{Uri Shapira}
\address{Dept. of Mathematics, Technion, Haifa, Israel
{\tt put email here}
}
\author{Barak Weiss}
\address{Dept. of Mathematics, Tel Aviv University, Tel Aviv, Israel
{\tt barakw@post.tau.ac.il}}

\maketitle
\begin{abstract}
Inspired by work of McMullen, we show that any orbit of the diagonal
group in the space of lattices accumulates on the set of stable
lattices. As consequences, we settle a conjecture of Ramharter
concerning the asymptotic behavior of the Mordell constant, and
reduce Minkowski's conjecture on products of linear forms to a
geometric question, yielding two new proofs of the conjecture in
dimensions up to 7. We also answer a question of Harder on the volume
of the set of stable lattices.
\end{abstract}
\section{Introduction}
Let $n \geq 2$ be an integer, let $G \df \SL_n(\R), \, \Gamma \df
\SL_n(\Z)$, let $A \subset G$ be the subgroup of positive diagonal
matrices and let  $\Xn \df G/\Gamma$ be the space of unimodular 
lattices in $\R^n$. The purpose of this paper is to present a
dynamical result regarding the action of $A$ on $\Xn$, and to present
some consequences in the geometry of numbers. 

A lattice $x \in \Xn$ is called {\em stable} if for any
subgroup $\Lambda \subset x$, the covolume of $\Lambda$ in
$\spa(\Lambda)$ is at least 1. In particular the length of the
shortest nonzero vector in $x$ is at least 1. Stable lattices have
also been called `semistable', they were introduced in a broad
algebro-geometric context by Harder, Narasimhan and Stuhler
\cite{Stuhler, Harder}, and were used to develop a
reduction theory for the study of the topology of locally symmetric
spaces. See Grayson \cite{Grayson} for a clear exposition. 
\begin{theorem}\Name{thm: main} 
For any $x \in \Xn$, the orbit-closure $\overline{Ax}$ contains a
stable lattice. 
\end{theorem}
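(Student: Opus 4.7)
The plan is to realize a stable lattice as a global maximizer on $\overline{Ax}$ of a natural ``instability''-measuring function. Define
$$\phi:\Xn\to(0,\infty),\qquad \phi(y)\df\min\bigl\{\on{covol}_{\spa(\Lambda)}(\Lambda):\{0\}\neq\Lambda\subset y\text{ primitive sublattice}\bigr\},$$
so that $y$ is stable exactly when $\phi(y)\ge 1$. The minimum is attained since only finitely many primitive sublattices of $y$ have covolume below any fixed bound, and standard reduction theory gives continuity of $\phi$ on $\Xn$. Mahler's criterion makes the super-level sets $\{\phi\ge C\}$ compact (as $\phi(y)$ is bounded above by the length of a shortest nonzero vector of $y$), and globally $\phi$ is bounded above by the Hermite constant. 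Hence $\phi$ attains a maximum $M$ on the closed $A$-invariant set $\overline{Ax}$ at some $y_0$; everything reduces to proving $M\ge 1$.

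Toward a contradiction, suppose $M<1$, and list the primitive sublattices of $y_0$ of covolume $M$ as $\Lambda_1,\ldots,\Lambda_s$, with wedges $\omega_j=\sum_{|I|=k_j}c_{j,I}\,e_I\in\bigwedge^{k_j}\bR^n$. A direct coordinate computation yields
$$\frac{d}{dt}\bigg|_{t=0}\log\on{covol}(e^{tX}\Lambda_j)\;=\;\frac{\langle w_j,X\rangle}{\|\omega_j\|^2},\qquad w_{j,i}\df\sum_{I\ni i}c_{j,I}^2,$$
for $X=\on{diag}(x_1,\ldots,x_n)\in\lie(A)$. If some $X\in\lie(A)$ made every $\langle w_j,X\rangle>0$, then for small $t>0$ the covolumes of all the $\Lambda_j$ along $e^{tX}y_0$ would exceed $M$, while the remaining (finitely many) primitive sublattices of covolume near $M$ stay above $M$ by continuity; this would contradict the maximality of $\phi$ at $y_0$. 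Hence no such $X$ exists, and Gordan's alternative, applied after projecting the $w_j$ to $\lie(A)$, produces weights $\alpha_j\ge 0$ with $\sum_j\alpha_j=1$ and $\sum_j\alpha_j w_j=\lambda(1,\ldots,1)$ for some $\lambda>0$; equivalently, $\sum_j\alpha_j\sum_{I\ni i}c_{j,I}^2$ is independent of $i$.

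The main obstacle is to deduce $M\ge 1$ from this balance condition, which should reduce to a Hadamard/Brascamp--Lieb type inequality
$$\prod_{j=1}^s\on{covol}(\Lambda_j)^{\alpha_j}\;\ge\;\on{covol}(y_0)\;=\;1$$
for primitive sublattices of a unimodular lattice in a ``balanced'' configuration, from which $M=\prod_j M^{\alpha_j}\ge 1$ is immediate. The inequality is straightforward in dimension~$2$ (it follows from AM--GM applied to $(a\pm b)^2$ for pairs $(a,b),(b,a)$ with $|a^2-b^2|=1$), and in general I expect it to reduce to a multilinear inequality on the exterior algebra exploiting the primitivity of the $\Lambda_j$ and the unimodularity of $y_0$. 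Proving this inequality cleanly across all mixed-rank balanced configurations is the bulk of the work; given it, the theorem follows.
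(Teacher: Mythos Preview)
Your variational setup is sound and the first-order calculation is correct, but the approach founders at the last step: the ``Brascamp--Lieb type inequality'' you conjecture is \emph{false} for balanced configurations in general. Here is a two-dimensional counterexample. Fix a small $a>0$, set $b=1/(2a)$, and let $y_0$ be the unimodular lattice spanned by $v_1=(a,a)$ and $v_2=(-b,b)$. The unique primitive sublattice of minimal covolume is $\Lambda_1=\bZ v_1$, with $|\Lambda_1|=a\sqrt2<1$. Its weight vector is $w_1=(a^2,a^2)$, already a multiple of $(1,1)$, so your balance condition from Gordan's alternative holds trivially (with $s=1$, $\alpha_1=1$). Yet $\prod_j|\Lambda_j|^{\alpha_j}=a\sqrt2<1$. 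The reason this does not contradict your argument is that $y_0$ is not a maximizer of $\phi$ on $\overline{Ay_0}$: along $\mathrm{diag}(e^t,e^{-t})$ the length of $v_1$ becomes $a\sqrt{e^{2t}+e^{-2t}}$, which has a strict \emph{minimum} at $t=0$. Your first-order condition cannot see this---it detects critical points of $\phi$ along $A$, not whether they are maxima, saddles, or (as here) minima.

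So the gap is that the only consequence you extract from ``$y_0$ is a global maximizer on $\overline{Ax}$'' is the first-order balance, and that is strictly weaker than what you need. One could try to add second-order (Hessian) information, but once several $\Lambda_j$ of different ranks interact the analysis becomes delicate, and there is no evident route from a second-order condition to your inequality. The paper's proof sidesteps the variational picture entirely: it first reduces to bounded $A$-orbits (via Birch--Swinnerton-Dyer and an upper-triangular block argument), and then, following McMullen, constructs for each $\vre>0$ an open cover $\{U_k\}_{k=1}^n$ of $A$ indexed by the dimension of the span of ``almost-minimizing'' subgroups. A topological covering theorem forces $U_n\neq\varnothing$, and points of $U_n$ yield lattices $\vre$-close to stable. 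The topological input replaces any need to analyze $\phi$ near a critical point.
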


Theorem \ref{thm: main} is inspired by
a breakthrough result of McMullen \cite{McMullenMinkowski}. Recall that a lattice in $\Xn$ is
called {\em well-rounded} if its shortest nonzero vectors span $\R^n$.
In connection with his work on
Minkowski's conjecture, McMullen showed that the closure of any bounded
$A$-orbit in $\Xn$ contains a well-rounded lattice. The
set of well-rounded lattices neither contains, nor is contained in,
the set of stable lattices,
but the proof of Theorem \ref{thm:
  main} closely follows McMullen's strategy.

We apply Theorem \ref{thm: main} to two problems in the geometry of
numbers. 
Let $x \in \Xn$ be a unimodular lattice. By a {\em symmetric box} in
$\R^n$ we mean a set of the form $ [-a_1, a_1] \times \cdots \times
[-a_n, a_n]$, and we say that a symmetric box is 
 {\em admissible} for $x$ if it contains no nonzero points of
 $x$ in its interior.  The {\em Mordell constant} of $x$
 is defined to be  
\eq{eq: defn const}{
\kappa(x) \df \frac{1}{2^n} \sup_{\BB} \Vol(\BB), 
}
where the supremum is taken over admissible symmetric boxes $\BB$, and
where $\Vol(\BB)$ denotes the volume of $\BB$.
We also write 
\eq{eq: defn kappan}{\kappa_n \df \inf\{\kappa(x): x \in \Xn\}.
}
The infimum in this definition is in fact a minimum, and, as with many
problems in the geometry of numbers it is of interest to compute the
constants $\kappa_n$ and identify the lattices realizing the
minimum. However this appears to be a very difficult problem, which so
far has only been solved for $n=2,3$, the latter in a difficult paper
of Ramharter \cite{Ramharter_dim3}. It is also of interest to provide
bounds on the asymptotics of $\kappa_n$, and in \cite{Ramharter_conjecture},
Ramharter conjectured that $\limsup_{n \to \infty} \kappa_n^{1/n\log
  n}>0$. As a simple corollary of Theorem \ref{thm: main}, we validate
Ramharter's conjecture, with an explicit bound: 
\begin{cor}\Name{cor: Ramharter conj}
For all $n \geq 2,$ 
\eq{eq: our bound}{\kappa_n \geq n^{-n/2}.
} 
In particular
$$
\kappa_n^{1/n\log
  n}  \geq n^{-1/2\log n}  \longrightarrow_{n \to \infty} 
 \frac{1}{\sqrt{e}}. 
$$
\end{cor}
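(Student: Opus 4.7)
The plan is to combine Theorem \ref{thm: main} with two structural facts about $\kappa$: that it is $A$-invariant, and that for stable lattices it admits a clean lower bound via the length of the shortest vector. Given any $x \in \Xn$, I would produce a stable $y \in \overline{Ax}$, bound $\kappa(y)$ from below, and transfer the bound to $x$ by $A$-invariance plus a small perturbation argument.

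First I would verify that $\kappa$ is constant on $A$-orbits. For $a = \diag{t_1,\ldots,t_n} \in A$ and a symmetric box $\BB = \prod_{i=1}^n [-a_i, a_i]$, the image $a\BB = \prod_{i=1}^n [-t_i a_i, t_i a_i]$ is again a symmetric box of the same volume (since $\det a = 1$), and admissibility is preserved: $\BB$ is admissible for $x$ iff $a\BB$ is admissible for $ax$. Hence $\kappa(ax) = \kappa(x)$. Next I would bound $\kappa(y)$ from below when $y$ is stable. Applying stability to the rank-one sublattice $\Z v$ generated by any nonzero $v \in y$ gives $\|v\|_2 \geq 1$. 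Consequently, for any $r < 1/\sqrt{n}$ every point in the interior of $\BB = [-r,r]^n$ has Euclidean norm at most $\sqrt{n}\,r < 1$, hence is not a nonzero point of $y$. Thus $\BB$ is admissible for $y$ and $\kappa(y) \geq r^n$; letting $r \nearrow 1/\sqrt{n}$ yields $\kappa(y) \geq n^{-n/2}$.

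To combine, I would invoke Theorem \ref{thm: main} to choose $a_k \in A$ with $a_k x \to y$ for some stable $y$. Given an admissible symmetric box $\BB$ for $y$ and $\epsilon > 0$, the slightly shrunken box $\BB' \df (1-\epsilon)\BB$ contains no nonzero point of $y$ in its closure; since the nonzero lattice points inside any fixed compact neighborhood vary continuously with the lattice, $\BB'$ is admissible for $a_k x$ for all sufficiently large $k$. By $A$-invariance, $\kappa(x) = \kappa(a_k x) \geq (1-\epsilon)^n \Vol(\BB)/2^n$. Letting $\epsilon \to 0$ and taking the supremum over admissible $\BB$ gives $\kappa(x) \geq \kappa(y) \geq n^{-n/2}$. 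Since $x \in \Xn$ was arbitrary, $\kappa_n \geq n^{-n/2}$, and the asymptotic follows directly from $\log(n^{-1/(2\log n)}) = -1/2$.

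The only technical step is the semi-continuity argument transferring admissibility from $y$ to the approximating $a_k x$, but this is not a real obstacle: it is a standard consequence of the local constancy of lattice points on compact sets under small perturbations of the lattice. All the substance is packed into Theorem \ref{thm: main}; once one has a stable lattice in $\overline{Ax}$, the corollary reduces to the elementary remark that the cube $[-1/\sqrt{n}, 1/\sqrt{n}]^n$ is inscribed in the unit Euclidean ball.
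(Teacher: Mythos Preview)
Your proof is correct and follows essentially the same approach as the paper: $A$-invariance of $\kappa$, Theorem~\ref{thm: main} to produce a stable lattice, and the observation that the cube $[-1/\sqrt{n},1/\sqrt{n}]^n$ sits inside the unit Euclidean ball. The only organizational difference is that the paper packages the continuity step into Corollary~\ref{cor: Euclidean} (finding $a\in A$ with $\alpha_1(ax)\geq 1-\vre$, so one stays inside the orbit itself), whereas you pass to the stable limit $y\in\overline{Ax}$ and invoke lower semicontinuity of $\kappa$ directly; both are equivalent and equally valid.
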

We remark that Corollary \ref{cor: Ramharter conj} could also be
derived from McMullen's results and a theorem of Birch and
Swinnerton-Dyer. In \S \ref{sec: rankin
  bounds} we show that the bound \equ{eq: our bound} is not 
optimal and explain how to obtain better bounds, for all $n$ which are
not divisible by 4.  We refer the reader to
\cite{gruber} for more information on the possible values of
$\kappa(x), x \in \Xn$. 

Our second application concerns Minkowski's conjecture\footnote{It is not clear to us whether
  Minkowski actually made this conjecture.}, 
which posits that for any unimodular lattice $x$, one has 
\eq{eq: Minkowski conj}{
\sup_{u \in \R^n} \, \inf_{v \in x} |N(u-v)| \leq \frac{1}{2^n},
}
where $N(u_1, \ldots, u_d) \df \prod_j u_j.$
Minkowski solved the question for $n=2$ and several
authors resolved the cases $n \leq 5$. 
In \cite{McMullenMinkowski}, McMullen settled the case
$n=6$. In fact, using his theorem on the $A$-action on $\Xn$, McMullen
showed that in arbitrary dimension $n$, 
Minkowski's conjecture is implied by the statement that any well-rounded
lattice $x \subset \R^d$ with $d \leq n$ satisfies 
\eq{eq: covrad}{\covrad(x) \leq \frac{\sqrt{d}}{2},}
where $\covrad(x) \df \max_{u \in \R^d} \min_{v \in x} \|u-v\|$
and $\| \cdot \|$ is the Euclidean norm on $\R^d$. At the time of
writing \cite{McMullenMinkowski}, \equ{eq:
  covrad} was known to hold for well-rounded lattices in dimension at most $6$, and in recent work of
Hans-Gill, Raka, Sehmi and Leetika \cite{hans-gill1, hans-gill2, leetika}, \equ{eq: covrad} has
been proved for well-rounded lattices in dimensions $n=7,8,9$, thus settling Minkowski's question in
those cases. 

Our work gives two new  approaches to Minkowski's conjecture. A direct
application of Theorem \ref{thm: main} (see Corollary 
\ref{cor: for Minkowski 1}) shows that it follows in 
dimension $n$, from the assertion that for any stable $x \in \Xn$, \equ{eq: covrad} holds. Note
that we do not require \equ{eq: covrad} in dimensions less than
$n$. Using the strategy of Woods and Hans-Gill et al, in Theorem
\ref{thm: use of KZ diagonal} we define a compact subset  $\mathrm{KZS} \subset \R^n$ and a
collection 
of $2^{n-1}$ subsets $\{ \cW(\cI)\}$ of $ \R^n$. We show that the
assertion $\mathrm{KZS} \subset \bigcup_{\cI} \cW(\cI)$ implies
Minkowski's conjecture in dimension 
$n$.

Secondly, an induction using the naturality of stable lattices,
leads to the following sufficient condition: 

\begin{cor}\Name{cor: Minkowski}
Suppose that for some dimension $n$, for all $d\leq n$, any stable
lattice $x \in \LL_{d}$ which is a local maximum of the function covrad, satisfies
\equ{eq: covrad}. Then \equ{eq: Minkowski conj} holds for any $x \in \Xn$. 
\end{cor}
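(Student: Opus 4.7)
The plan is to strengthen the hypothesis inductively: for every $d \leq n$, every stable $x \in \LL_d$ satisfies \equ{eq: covrad}. Once this is established, Corollary \ref{cor: for Minkowski 1} immediately gives Minkowski's conjecture in dimension $n$. The base case $d=1$ is trivial, since $\Z$ is the only lattice in $\LL_1$ and $\covrad(\Z) = 1/2$.

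For the inductive step, fix $d \leq n$ and assume the conclusion for all $d'<d$. First I would observe that the set $S_d$ of stable lattices in $\LL_d$ is closed (the existence of a strictly destabilizing sublattice is an open condition) and, by Mahler's criterion, bounded, since stability forces the shortest nonzero vector to have length at least $1$. Hence $S_d$ is compact, and because $\covrad$ is continuous, it attains a maximum $C$ on $S_d$ at some $x_0$. If $x_0$ lies in the interior of $S_d$, then every sufficiently nearby lattice is stable with $\covrad \leq C$, so $x_0$ is a local maximum of $\covrad$ on all of $\LL_d$, and the hypothesis of the corollary directly gives $C \leq \sqrt{d}/2$.

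The remaining case is $x_0 \in \partial S_d$. Then one can find non-stable lattices $y_k \to x_0$, each equipped with a proper saturated sublattice $\Lam_k$ of covolume strictly less than $1$ in its span. A compactness argument --- the step I expect to require the most care --- should produce in the limit a proper saturated sublattice $\Lam \subset x_0$ of rank $k$, $0<k<d$, with covolume $\leq 1$ in its span $V$; stability of $x_0$ then forces this covolume to equal $1$. Writing $x_0'$ for the orthogonal projection of $x_0$ onto $V^\perp$, the short exact sequence $0 \to \Lam \to x_0 \to x_0' \to 0$ shows $x_0'$ is unimodular in $V^\perp$, and one verifies that both $\Lam \in \LL_k$ and $x_0' \in \LL_{d-k}$ inherit stability from $x_0$ (any destabilizer of $\Lam$ would destabilize $x_0$, and any destabilizer $M$ of $x_0'$ lifts to a destabilizer $\pi^{-1}(M) \cap x_0$ of $x_0$).

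By the inductive hypothesis, $\covrad(\Lam) \leq \sqrt{k}/2$ and $\covrad(x_0') \leq \sqrt{d-k}/2$. To bound $\covrad(x_0)$, given $u = u_1 + u_2 \in V \oplus V^\perp$ I would pick $v' \in x_0'$ with $\|u_2-v'\|\leq \covrad(x_0')$, lift to some $w \in x_0$, and translate by a suitable $\lam \in \Lam$ to approximate $u_1$ to within $\covrad(\Lam)$; orthogonality of the two components then gives
$$\covrad(x_0)^2 \;\leq\; \covrad(\Lam)^2 + \covrad(x_0')^2 \;\leq\; \frac{k}{4}+\frac{d-k}{4} \;=\; \frac{d}{4},$$
so $C \leq \sqrt{d}/2$, completing the induction. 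Apart from the limiting argument extracting $\Lam$, all the remaining ingredients are classical: Mahler's criterion, stability under the Harder--Narasimhan exact sequence, and a short orthogonal computation.
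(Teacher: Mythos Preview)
Your proposal is correct and follows essentially the same route as the paper: induction on $d$, compactness of the set of stable lattices, the interior/boundary dichotomy for the maximizer, extraction of a limiting sublattice $\Lambda$ of covolume $1$, stability of $\Lambda$ and of the orthogonal projection, and the Pythagorean bound $\covrad(x_0)^2 \le \covrad(\Lambda)^2 + \covrad(x_0')^2$ (stated in the paper as Lemma~\ref{first Woods lemma}). The only cosmetic difference is that the paper does not insist the $\Lambda_j$ be saturated; but since $|\Lambda|=1$ and $x_0$ is stable forces $\Lambda$ to equal $x_0 \cap \spa(\Lambda)$, this comes out the same.
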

The local maxima of the function covrad have been
studied in depth in recent work of Dutour-Sikiri\'c, Sch\"urmann
and Vallentin \cite{mathieu}, who characterized them and showed that there
are finitely many  in each dimension. 
These two approaches give two new
proofs of Minkowski's Conjecture  in dimensions $n \leq 7$.

A natural question is to what extent stable lattices are typical
in $\Xn$. The definition of stability may appear at first sight to be very
restrictive. Nevertheless in \S \ref{sec: volume computation} we show that as $n\to \infty$,
the probability that a random lattice is stable tends to 1 (where the
probability is taken 
with respect to the
natural $G$-invariant measure on $\Xn$). This answers a question of
G. Harder. In fact a stronger statement is true, see Proposition
\ref{prop: strengthening volume}. For the results of \S \ref{sec: volume computation}
we use Siegel's approach to measure volumes \cite{SiegelFormula} and
rely on computations of Thunder \cite{Thunder}.

A significant difference between Theorem \ref{thm:
  main} and McMullen's work on well-rounded lattices, is that we do
not need to assume that the orbit $Ax$ is bounded.    
One may wonder whether McMullen's result is valid without the
hypothesis that $Ax$ is bounded, namely is it true that the closure of
any $A$-orbit in $\Xn$ (or even the orbit itself) contains a
well-rounded lattice? We answer these 
questions affirmatively for {\em closed} orbits in \S \ref{sec: closed
orbits}.  To this end we use results
of Tomanov and the second-named author \cite{TW}, as well as a
covering result (which we learned from  Michael Levin) generalizing one of the results of
\cite{McMullenMinkowski}; this topological result appears to be
well-known to experts, but in order to keep this paper self-contained,
we give the proof in the appendix. For
another perspective on this and related questions, see \cite{PS}. 

\subsection{Acknowledgements} Our work was inspired by Curt McMullen's
breakthrough paper \cite{McMullenMinkowski}
and many of our arguments are adaptations of
arguments appearing in \cite{McMullenMinkowski}. 
We are also grateful to Curt McMullen for additional insightful remarks, and in
particular for the suggestion to study the set of stable lattices in
connection with the $A$-action on $\Xn$. We thank Michael Levin for
useful discussions on topological questions and for agreeing to
include the proof of Theorem \ref{thm: covering} in the appendix. We also thank Mathieu
Dutour-Sikiri\'c, Rajinder Hans-Gill, G\"unter Harder, Gregory Minton and
Gerhard Ramharter for useful discussions. The authors' work was
supported by ERC starter grant DLGAPS 279893 and ISF grant 190/08.

\section{Orbit closures and stable lattices}
Given a lattice $x\in \Xn$ and a subgroup $\Lam \subset x$, we denote by
$r(\Lam)$ the rank of $\Lam$ and by $\av{\Lam}$ the covolume of $\Lam$
in the linear subspace $\spa(\Lambda)$. 
Let 
\begin{align}\label{alpha}
\nonumber \cV(x)&\defi\set{\av{\Lam}^{\frac{1}{r(\Lam)}}: \Lam \subset
  x 
}, \\
\al(x)&\defi\min\cV(x).
\end{align}
Since we may take $\Lam = x$ we have 
$\alpha(x) \leq 1$ for all $x \in \Xn$, and $x$ is stable precisely if $\alpha(x)=1$. 
Observe that $\cV(x)$ 
is a countable discrete subset of the positive reals, and hence the
minimum in 
\eqref{alpha} is attained. 
Also note that the function $\al$ is a variant of the `length of the shortest
  vector'; it is continuous and the sets $\{x: \alpha(x) \geq \vre\}$
  are an exhaustion of $\Xn$ by compact sets. 

We begin by explaining the strategy for proving Theorem \ref{thm:
  main}, which is identical to the one used by McMullen. 
For a lattice $x\in X$ and $\vre>0$  we define an open cover  
$\cU^{x,\vre}=\set{U^{x,\vre}_k}_{k=1}^n$ 
of the diagonal group $A$, where if $a\in U^{x,\vre}_k$ then $\al(ax)$
is `almost attained' by a subgroup of rank $k$. In particular,  
if $a\in U^{x,\vre}_n$ then $ax$ is `almost stable'. 
The main point is to show that for any $\vre>0$, $U^{x,\vre}_n \neq
\varnothing$; for then, taking $\vre_j \to 0$ and $a_j \in A$ such
that $a_j\in U_n^{x,\vre_j}$, we find (passing to a subsequence) that
$a_jx$ converges to a stable lattice. 

In order to establish that
$U_n^{x,\vre}\ne\varnothing$, we apply a topological result of McMullen
(Theorem~\ref{topological input}) regarding open covers  
which is reminiscent of the classical result of Lebesgue
that asserts that in an open cover of Euclidean $n$-space by bounded balls
there must be a point which is covered $n+1$ times. We will work to
show that the 
cover $\cU^{x,\vre}$ satisfies the assumptions of
Theorem~\ref{topological input}. We will be able to verify these assumptions
when the orbit $Ax$ is bounded. In~\S\ref{sec: reduction to compact orbits} we reduce the proof of
Theorem~\ref{thm: main} to this case.

\subsection{Reduction to bounded orbits}\Name{sec: reduction to compact orbits}
Using a result of Birch and Swinnerton-Dyer, we
will now show that it suffices to prove  
Theorem~\ref{thm: main}
under the assumption that the orbit $Ax\subset \Xn$ is bounded; that is,
that $\overline{Ax}$ is compact. 
In this subsection we will denote $A,G$ by $A_n, G_n$ as various dimensions will appear. 

For a matrix $g\in G_n$ we denote by $\br{g}\in \Xn$ the corresponding lattice. If 
\eq{block form}{
g=\mat{g_1&*&\dots&*\\ 0& g_2&\dots&\vdots \\ \vdots& &\ddots&* \\ 0&\dots&0&g_k}
}
where $g_i\in G_{n_i}$ for each $i$, then we say that $g$ is in
\textit{upper triangular block form} 
and refer to the $g_i$'s as the \textit{diagonal blocks}. Note 
that in this definition, we insist that
each $g_i$ is of determinant one. 

\begin{lemma}\Name{lem: block stable is stable}
Let $x=\br{g}\in \Xn$  where $g$ is in upper triangular block form as
in~\eqref{block form} and for each $1\le i\le k$, $\br{g_i}$ is  
a stable lattice in $\mathcal{L}_{n_i}$. Then $x$ is stable.
\end{lemma}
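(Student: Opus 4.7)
The natural approach is to exploit the flag of coordinate subspaces induced by the block decomposition. Let $V = \mathbb{R}^n$ and let $V_i = \spa(e_1,\ldots,e_{N_i})$ where $N_i = n_1 + \cdots + n_i$, so that $0 = V_0 \subset V_1 \subset \cdots \subset V_k = V$. I will induce from this a filtration $x_i \df x \cap V_i$ of $x$ by saturated sublattices, and verify directly from the upper triangular block form \equ{block form} that the projection $V_i \to V_i/V_{i-1}$ identifies $x_i/x_{i-1}$ with the lattice $[g_i] \in \mathcal{L}_{n_i}$ sitting inside $V_i/V_{i-1} \cong \R^{n_i}$ with its inherited Euclidean structure. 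In particular each quotient $x_i/x_{i-1}$ is discrete and of covolume~$1$.

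Now let $\Lam \subset x$ be an arbitrary subgroup, and set $\Lam_i \df \Lam \cap V_i$, $W_i \df \spa(\Lam_i)$, $r_i \df \on{rank}(\Lam_i)$, $s_i \df r_i - r_{i-1}$. Because $x$ itself projects to a lattice in $V/V_{i-1}$, any subgroup of $x$ projects to a discrete subgroup there; in particular $\Lam_i/\Lam_{i-1}$ is a rank-$s_i$ lattice in its span inside $V_i/V_{i-1}$, and it embeds as a sublattice of $x_i/x_{i-1} = [g_i]$. Since $[g_i]$ is stable,
\[
 \av{\Lam_i/\Lam_{i-1}}^{1/s_i} \geq 1,
\]
with the covolume computed in the span of $\Lam_i/\Lam_{i-1}$ inside $V_i/V_{i-1}$. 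A short dimension count (using that $\Lam_{i-1}$ is saturated in $\Lam_i$) gives $W_i \cap V_{i-1} = W_{i-1}$, so this span coincides with the orthogonal complement $W_{i-1}^\perp \cap W_i$ up to isometric identification.

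To assemble the estimates, I will use the standard multiplicativity of covolume for an exact sequence of lattices equipped with an orthogonal splitting: from the orthogonal decomposition $W_i = W_{i-1} \oplus (W_{i-1}^\perp \cap W_i)$ one has
\[
\av{\Lam_i}_{W_i} = \av{\Lam_{i-1}}_{W_{i-1}} \cdot \av{\Lam_i/\Lam_{i-1}}_{W_i/W_{i-1}}.
\]
Iterating gives $\av{\Lam} = \prod_{i=1}^k \av{\Lam_i/\Lam_{i-1}} \geq 1$, so $x$ is stable.

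The only genuinely nontrivial step is the compatibility of the various covolumes, i.e.\ verifying that the quotient $\Lam_i/\Lam_{i-1}$ really is discrete in $V_i/V_{i-1}$ and that its covolume matches the one appearing in the multiplicative formula. This reduces to observing that $\Lam_{i-1}$ is saturated in $\Lam_i$ (since it is the intersection with a subspace) and that $x \cap V_{i-1}$ is a full-rank lattice in $V_{i-1}$, which is an immediate consequence of the block-triangularity of $g$ combined with $\det g_j = 1$ for each $j$.
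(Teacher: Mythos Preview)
Your overall strategy coincides with the paper's: filter $\Lambda$ by the flag $V_0\subset\cdots\subset V_k$, use stability of each $[g_i]$ to bound the covolume of the successive quotients from below by $1$, and multiply via the Gram--Schmidt formula. The paper reduces by induction to $k=2$, whereas you treat general $k$ directly; this is only a cosmetic difference.

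There is, however, a real gap in the line ``so this span coincides with the orthogonal complement $W_{i-1}^\perp\cap W_i$ up to isometric identification.'' The natural linear isomorphism
\[
W_{i-1}^\perp\cap W_i \;\cong\; W_i/W_{i-1} \;\longrightarrow\; p(W_i)\subset V_i/V_{i-1}
\]
is \emph{not} an isometry when $W_{i-1}\subsetneq V_{i-1}$. For instance, with $n_1=2$, $n_2=1$, $x=\Z^3$, and $\Lambda=\Z e_1\oplus\Z(e_2+e_3)$, one has $W_1=\R e_1$, $W_1^\perp\cap W_2=\R(e_2+e_3)$, while the image in $V_2/V_1\cong\R e_3$ is $\R e_3$; the generator has length $\sqrt{2}$ on one side and $1$ on the other. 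Thus the covolume appearing in your multiplicative formula, namely $\av{\Lambda_i/\Lambda_{i-1}}_{W_i/W_{i-1}}$, need not equal the covolume $\av{\Lambda_i/\Lambda_{i-1}}_{V_i/V_{i-1}}$ which you bound from below using stability of $[g_i]$.

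The fix is immediate: since $W_{i-1}\subset V_{i-1}$, the map above is the restriction of the orthogonal projection onto $V_{i-1}^\perp$, hence $1$-Lipschitz, and therefore can only \emph{decrease} covolumes. This yields
\[
\av{\Lambda_i/\Lambda_{i-1}}_{W_i/W_{i-1}}\;\ge\;\av{\Lambda_i/\Lambda_{i-1}}_{V_i/V_{i-1}}\;\ge\;1,
\]
and the multiplicative formula then gives $\av{\Lambda}\ge 1$. Replace ``isometric'' by ``norm-nonincreasing'' and the argument is complete.
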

\begin{proof}
By induction, in proving the Lemma we may assume that $k=2$. Let us
denote the standard basis of $\R^n$ by $\E_1, \ldots, \E_n$, let
us write $n=n_1+n_2$, 
$V_1 \df\on{span}\set{\E_1, \ldots, \E_{n_1}}$,
$V_2 \df \on{span}\set{\E_{n_1+1} \ldots, \E_n}$, and let $\pi: \R^n
\to V_2$ be the natural projection. By construction we have $x \cap
V_1 = [g_1], \pi(x) = [g_2]$. 
 
Let $\Lam \subset x$ be a subgroup, write 
$\Lam_1 \df \Lam\cap V_1$ and choose a direct complement 
$\Lam_2 \subset \Lam$, that is 
$$\Lam=\Lam_1+\Lam_2, \ \ \Lam_1 \cap \Lam_2 = \{0\}.$$ 
We claim that 
\eq{eq: claim 1}{\av{\Lam}=\av{\Lam_1}\cdot\av{\pi(\Lam_2)}.}
To see this we recall that one may compute
$|\Lam|$ via the Gram-Schmidt process. Namely, one begins with a set of generators $v_j$
of $\Lambda$ and successively defines $u_1=v_1$ and $u_j$ is the
orthogonal projection of $v_j$ on $\spa (v_1, \ldots,
v_{j-1})^\perp$. In these terms, $|\Lam| = \prod_j \|u_j\|$. Since $\pi$ is an orthogonal
projection and $\Lam \cap V_1$ is in $\ker \pi$, \equ{eq: claim 1} is clear from the
above description. 

The discrete subgroup
$\Lam_1$, when viewed as a subgroup of $\br{g_1}\in \mathcal{L}_{n_1}$ satisfies
$\av{\Lam_1}\ge 1$ because $\br{g_1}$ is assumed to be
stable. Similarly $\pi(\Lam_2) \subset [g_2] \in \mathcal{L}_{n_2}$
satisfies $\av{\pi(\Lam_2)}\ge 1$, hence 
$\av{\Lam}\ge 1$. 
\end{proof}
\begin{lemma}\Name{lem: compt red}
Let $x\in \Xn$ and assume that $\overline{Ax}$ contains a lattice
$\br{g}$ with $g$ of upper triangular block form 
as in~\eqref{block form}. For each $1\le i\le k$,  
suppose $\br{h_i}\in\overline{A_{n_i}\br{g_i}}\subset \mathcal{L}_{n_i}$. Then there
exists a lattice $\br{h}\in\overline{Ax}$ such that $h$ has the
form~\eqref{block form} with $h_i$ as its diagonal blocks. 
\end{lemma}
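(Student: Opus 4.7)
My plan is to produce a sequence $B_m\in A$ such that $B_m[g]$ converges in $\Xn$ to a lattice $[h]$ of the required form. Orbit closures are $A$-invariant since $A$ acts on $\Xn$ by homeomorphisms, and $[g]\in\overline{Ax}$, so any such limit will automatically lie in $\overline{Ax}$. Using that $[h_i]\in\overline{A_{n_i}[g_i]}$, choose $b_{i,m}\in A_{n_i}$ and $\gamma_{i,m}\in\SL_{n_i}(\Z)$ with $b_{i,m}g_i\gamma_{i,m}\to h_i$ in $G_{n_i}$. Define
$$B_m\defi \on{diag}(b_{1,m},\ldots,b_{k,m})\in A,\qquad \Gamma_m\defi \on{diag}(\gamma_{1,m},\ldots,\gamma_{k,m})\in\Gamma,$$
viewed as block diagonal matrices (note $B_m$ is genuinely in the full diagonal group). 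Then $B_m g\Gamma_m$ represents the lattice $B_m[g]$, and is block upper triangular with $(i,i)$-block equal to $b_{i,m}g_i\gamma_{i,m}\to h_i$. Its off-diagonal $(i,j)$-blocks for $i<j$ are $b_{i,m}u_{ij}\gamma_{j,m}$, where $u_{ij}$ denotes the corresponding block of $g$; these will in general be unbounded in $m$.

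The heart of the argument is to tame these off-diagonal blocks by right multiplication by an integer block upper \emph{unitriangular} matrix $N_m\in\Gamma$ (identity diagonal blocks, integer off-diagonal blocks). Right multiplication by $N_m$ does not alter the lattice $[B_m g\Gamma_m]=B_m[g]$ nor its diagonal blocks, but it modifies the $(i,j)$-block by adding $(B_m g\Gamma_m)_{ii}(N_m)_{ij}$ plus a $\Z$-linear combination of blocks $(B_m g\Gamma_m N_m)_{i\ell}$ with $i<\ell<j$. Since $(B_m g\Gamma_m)_{ii}\to h_i\in\GL_{n_i}(\R)$, the $\Z$-lattice spanned by the columns of $(B_m g\Gamma_m)_{ii}$ admits a fundamental domain of uniformly bounded diameter for large $m$. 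Processing the pairs $(i,j)$ in order of increasing $j-i$, I can inductively choose the integer blocks $(N_m)_{ij}\in\Mat_{n_i\times n_j}(\Z)$ so that the accumulated $(i,j)$-block of $B_m g\Gamma_m N_m$ lies in such a fundamental domain, hence is uniformly bounded in $m$.

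After this reduction the matrices $B_m g\Gamma_m N_m$ lie in a fixed compact subset of $G$, so a subsequence converges to some $h\in G$. By construction $h$ is block upper triangular with diagonal blocks $h_i$, and $[h]=\lim B_m[g]\in\overline{Ax}$ by $A$-invariance of the orbit closure, as required. The main difficulty is the bookkeeping in the inductive construction of $N_m$: one must order the pairs $(i,j)$ so that at each step the "error" to be cancelled depends only on blocks already rendered bounded, and one must verify that the fundamental domains used for reduction stay uniformly bounded as $m\to\infty$. The latter is immediate from the invertibility of the limits $h_i$, and the former is arranged by the $j-i$ ordering.
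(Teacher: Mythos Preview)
Your argument is correct and is essentially the same as the paper's: act by the block-diagonal subgroup $A_{n_1}\times\cdots\times A_{n_k}\subset A$ on $[g]$ and use compactness to extract a convergent subsequence in $\overline{A[g]}\subset\overline{Ax}$. The paper packages your explicit unipotent reduction as the single observation that the projection $[g]\mapsto([g_1],\ldots,[g_k])$ from the set of block-triangular lattices to $\mathcal{L}_{n_1}\times\cdots\times\mathcal{L}_{n_k}$ is $\widetilde A$-equivariant with compact fiber, so that a sequence whose image converges has a convergent subsequence; the content is identical.
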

\begin{proof}
Let $\Omega$ be the set of all lattices $[g]$ of a fixed triangular
form as in ~\eqref{block form}. Then $\Omega$ is a closed subset of
$\Xn$ and there is a projection 
$$\tau: \Omega \to \mathcal{L}_{n_1} \times
\cdots \times \mathcal{L}_{n_k}, \ \ \tau(\br{g}) =
(\br{g_1},\dots,\br{g_k}).$$ 
The map $\tau$ has a compact fiber and is equivariant with respect to the action
of $\widetilde{ A} \df A_{n_1} \times \cdots \times A_{n_k}$. 
By assumption, there is a sequence  $\tilde{a}_j = \left(a^{(j)}_1,
\ldots, a^{(j)}_k\right), \ a^{(j)}_i \in A_{n_i}$ in
$\widetilde{ A}$ such that $a^{(j)}_i [g_i] \to [h_i]$, then after passing to
a subsequence, $\tilde{a}_j [g] \to [h]$ where $h$ has the required
properties. Since $\overline{Ax} \supset \overline {A[g]}$, the claim
follows. 
\end{proof}
\begin{lemma}\label{BSD}
Let $x\in \Xn$. Then there is $[g] \in \overline{Ax}$ such that, up to
a possible permutation of the coordinates,
$g$ is of upper triangular block form as in~\eqref{block form} and 
each $A_{n_i}\br{g_i}\subset \mathcal{L}_{n_i}$ is bounded.
\end{lemma}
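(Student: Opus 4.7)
The plan is to proceed by induction on the dimension $n$, combining Mahler's compactness criterion with an application of Lemma~\ref{lem: compt red} and a theorem of Birch and Swinnerton-Dyer to handle the unbounded case. The base case $n=1$ is trivial: $\mathcal{L}_1$ is a single point, so we may take $k=1$ and there is nothing to prove.

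For the inductive step, fix $x \in \Xn$ and assume the result holds in every dimension smaller than $n$. If $Ax$ is bounded in $\Xn$, then the conclusion of the lemma already holds for $x$ itself, with $k=1$ and $g$ any representative of $x$. Otherwise, I would invoke the theorem of Birch and Swinnerton-Dyer to produce a lattice $y = [h_0] \in \overline{Ax}$ such that, after a suitable permutation of the coordinate axes, $h_0$ is in the block-triangular form of \eqref{block form} with $\ell \geq 2$ diagonal blocks $h_{0,i} \in G_{m_i}$ satisfying $m_1 + \cdots + m_\ell = n$ and $m_i < n$ for each $i$. The underlying mechanism is as follows: since $Ax$ is unbounded, Mahler's compactness criterion produces a sequence $a_j \in A$ and primitive sublattices $\Lam_j' \subset a_j x$ of a fixed rank $r \in \{1,\ldots,n-1\}$ with covolume $|\Lam_j'| \to 0$; after passing to subsequences so that $\spa(\Lam_j')$ converges in the Grassmannian to a subspace $V$, one exploits the structure of the diagonal group (which preserves all coordinate subspaces and scales their volumes independently) to find a coordinate permutation after which $V$ is itself a coordinate subspace, and a rescaling by further diagonal elements $b_j \in A$ that simultaneously normalizes $|b_j a_j \Lam_j'|$ and the covolume of the projection of $b_j a_j x$ to the complementary coordinate subspace. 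A subsequence of $b_j a_j x$ then converges in $\Xn$ to the desired lattice $y$ with the stated block form.

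With $y$ in hand, the rest is a clean assembly. Each block $[h_{0,i}] \in \mathcal{L}_{m_i}$ has $m_i < n$, so by the inductive hypothesis there exists $[h_i'] \in \overline{A_{m_i}[h_{0,i}]}$ of block-triangular form whose innermost diagonal blocks all have bounded $A$-orbit. Lemma~\ref{lem: compt red}, applied to $y$ with the $[h_i']$ playing the role of the $[h_i]$ in its statement, produces $[h'] \in \overline{Ay}$ whose block-triangular structure is the concatenation of the outer decomposition of $h_0$ with the inner refinements provided by the $h_i'$. Since $\overline{Ay} \subset \overline{Ax}$, this $[h']$ lies in $\overline{Ax}$ and has all innermost diagonal blocks with bounded $A$-orbit, as required. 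The main obstacle is the Birch--Swinnerton-Dyer step, where one must carefully choose the coordinate permutation and the diagonal rescaling so that the limit is a genuine lattice in $\Xn$ whose block structure aligns with a coordinate decomposition; the rest of the argument is a routine induction combined with the already-established Lemma~\ref{lem: compt red}.
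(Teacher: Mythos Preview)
Your argument is correct and matches the paper's: both cite Birch--Swinnerton-Dyer for the unbounded case and then apply induction together with Lemma~\ref{lem: compt red}. Your parenthetical sketch of the Birch--Swinnerton-Dyer ``mechanism'' is unnecessary and somewhat inaccurate (the limiting subspace $V$ need not be a coordinate subspace after a permutation---the actual argument in \cite{BirchSD} is subtler), but since you explicitly invoke the theorem as a black box this does not affect the validity of your proof.
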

\begin{proof}
If the orbit $Ax$ is bounded there is nothing to prove. According to
Birch and Swinnerton-Dyer \cite{BirchSD}, if $Ax$ is
unbounded 
then $\overline{Ax}$ contains a lattice with a
representative as in~\eqref{block form} (up to a possible permutation
of the coordinates) with $k=2$. Now the claim follows using 
induction and appealing to
Lemma~\ref{lem: compt red}. 
\end{proof}
\begin{proposition}\label{copt red prop}
It is enough to establish Theorem~\ref{thm: main} for
lattices having a bounded $A$-orbit.
\end{proposition}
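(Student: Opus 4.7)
The plan is to chain together the three preceding lemmas (Lemmas~\ref{lem: block stable is stable}, \ref{lem: compt red}, and~\ref{BSD}) by a simple induction on the dimension~$n$. For $n=1$ there is nothing to prove, so suppose inductively that Theorem~\ref{thm: main} has been verified for bounded orbits in $\Xn$, and has been verified in full in all strictly smaller dimensions. Given $x \in \Xn$ with $Ax$ unbounded, I would first invoke Lemma~\ref{BSD} to produce $[g] \in \overline{Ax}$ whose representative $g$ is (up to permutation of coordinates) in upper triangular block form \eqref{block form}, with blocks $g_1,\dots,g_k$ of strictly smaller dimensions $n_1,\dots,n_k<n$, each of which has bounded $A_{n_i}$-orbit in $\mathcal{L}_{n_i}$.

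Next, by the inductive hypothesis applied to each $[g_i]$ (indeed, the orbit $A_{n_i}[g_i]$ is already bounded, so only the "bounded" case of Theorem~\ref{thm: main} is required), each orbit closure $\overline{A_{n_i}[g_i]} \subset \mathcal{L}_{n_i}$ contains a stable lattice $[h_i]$. Lemma~\ref{lem: compt red} then lifts this collection of approximations to a single lattice $[h] \in \overline{Ax}$ having upper triangular block form with diagonal blocks exactly $h_1,\dots,h_k$. Finally, Lemma~\ref{lem: block stable is stable} guarantees that a lattice in upper triangular block form whose diagonal blocks are all stable is itself stable, so $[h] \in \overline{Ax}$ is the sought stable lattice. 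This completes the reduction.

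The only mild subtlety I anticipate is the "up to a permutation of coordinates" clause in Lemma~\ref{BSD}. Coordinate permutations are realized by signed permutation matrices $w$, which act orthogonally on $\R^n$; thus the property of being stable (which involves only covolumes in Euclidean subspaces) is invariant under $w$, and the block structure assumed by Lemmas~\ref{lem: compt red} and~\ref{lem: block stable is stable} also transports across such a conjugation since $w$ normalizes the diagonal group $A$. One can therefore simply conjugate all constructions by $w$ and work in the permuted coordinate frame; the whole argument goes through unchanged. In effect, the lemmas combine so cleanly that the proposition becomes a routine bookkeeping step, with no genuine obstacle beyond verifying that stability and the block form cooperate well under Weyl permutations.
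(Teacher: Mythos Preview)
Your proposal is correct and follows essentially the same route as the paper: apply Lemma~\ref{BSD} to obtain a block-triangular representative with bounded diagonal orbits, use the bounded case of Theorem~\ref{thm: main} on each block, lift via Lemma~\ref{lem: compt red}, and conclude stability from Lemma~\ref{lem: block stable is stable}. The paper simply phrases this as a direct implication (``assuming the bounded case, deduce the general case'') rather than as an explicit induction on $n$, and it leaves the coordinate-permutation issue implicit, but the substance of the argument is identical.
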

\begin{proof}
Let $x\in \Xn$ be arbitrary. By Lemma~\ref{BSD}, $\overline{A x}$
contains a lattice $\br{g}$ with $g$ of upper triangular block form
(up to a possible permutation of the coordinates)
with diagonal blocks representing lattices with bounded orbits under
the corresponding diagonal groups. Assuming Theorem~\ref{thm: main}
for lattices having bounded orbits, and applying Lemma~\ref{lem: compt
  red} we may take $g$ whose diagonal blocks represent  
stable lattices. By Lemma~\ref{lem: block stable is stable}, $\br{g}$ is
stable as well.
\end{proof}

\subsection{Some technical preparations}
We now discuss the  subgroups of a lattice $x\in \Xn$ which
almost attain the minimum $\al(x)$ in~\eqref{alpha}.  
 \begin{definition}\label{bn}
Given a lattice $x\in \Xn$ and $\del>0$, let 
\begin{align*}
\on{Min}_{\del}(x)&\defi\set{\Lam \subset x:\av{\Lam}^{\frac{1}{r(\Lam)}}<(1+\del)\al(x)},\\
\tb{V}_{\del}(x)&\defi\on{span}\on{Min}_{\del}(x),\\
\dim_\del(x)&\defi\dim\tb{V}_{\del}(x).
\end{align*}
\end{definition}
We will need the following technical statement. 
\begin{lemma}\label{for the inradius}
For any $\rho>0$ 
there exists a
neighborhood of the identity $W\subset 
G$ with the  
following property. Suppose  $ 2\rho \leq \delta_0 \leq
d+1$ and suppose  $x\in \Xn$ 
such that 
$\dim_{\delta_0-\rho}(x)=\dim_{\delta_0+\rho}(x)$. 
Then for any $g\in W$ and any 
$\del\in \left(\del_0-\frac{\rho}{2},\del_0+\frac{\rho}{2} \right)$ we have 
\begin{equation}\label{eq 1806}
\tb{V}_{\del}(gx)=g\tb{V}_{\del_0}(x).
\end{equation}
In particular, there is $1 \leq k \leq n$ such that  
for any $g\in W$ and any $\del\in
\left(\del_0-\frac{\rho}{2},\del_0+\frac{\rho}{2} \right)$, 
$\dim_\del(gx)=k$. 
\end{lemma}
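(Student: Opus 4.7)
The plan is to exploit uniform continuity of the normalized covolumes $\av{\Lam}^{1/r(\Lam)}$ under small perturbations by $g$, combined with the ``spectral gap'' hypothesis $\dim_{\del_0-\rho}(x)=\dim_{\del_0+\rho}(x)$. The basic estimate I would establish first is: for any $g\in G$ and any subgroup $\Lam\subset x$ of rank $r$,
$$
\sig_{\min}(g)\cdot\av{\Lam}^{1/r}\;\le\;\av{g\Lam}^{1/r}\;\le\;\sig_{\max}(g)\cdot\av{\Lam}^{1/r},
$$
where $\sig_{\min}(g),\sig_{\max}(g)$ are the smallest and largest singular values of $g$. This holds because the volume distortion of $g$ restricted to any $r$-dimensional subspace $V$ is a product of $r$ of its singular values, all lying in $[\sig_{\min}(g),\sig_{\max}(g)]$. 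Since $\Lam\mapsto g\Lam$ is a bijection between the subgroups of $x$ and of $gx$, minimizing over full-rank subgroups gives the companion bound $\al(gx)\in[\sig_{\min}(g),\sig_{\max}(g)]\cdot\al(x)$. Crucially, both estimates are uniform in $\Lam$ and in $x$.

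Next I would choose, purely algebraically, a small $\vre=\vre(\rho,n)>0$ so that for every $\del_0$ in the bounded interval $[2\rho,n+1]$ one has
$$
\frac{1+\vre}{1-\vre}\pa{1+\del_0-\rho}\;<\;1+\del_0-\tfrac{\rho}{2}
\ \ \text{and}\ \
\frac{1-\vre}{1+\vre}\pa{1+\del_0+\rho}\;>\;1+\del_0+\tfrac{\rho}{2}.
$$
Such a $\vre$ exists since $\del_0$ ranges over a compact interval depending only on $n$. I then define $W\defi\set{g\in G:\ \sig_{\min}(g),\sig_{\max}(g)\in[1-\vre,1+\vre]}$, an open neighborhood of $e$ depending only on $\rho$. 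By the first step, for any $g\in W$ and any subgroup $\Lam\subset x$ of rank $r$, the ratio $\av{g\Lam}^{1/r}/\al(gx)$ differs from $\av{\Lam}^{1/r}/\al(x)$ by a factor in $[(1-\vre)/(1+\vre),(1+\vre)/(1-\vre)]$, uniformly in $\Lam$ and in $x$.

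The inequalities on $\vre$ then yield, for every $g\in W$ and every $\del\in\left(\del_0-\frac{\rho}{2},\del_0+\frac{\rho}{2}\right)$, the two-sided sandwich
$$
g\cdot\on{Min}_{\del_0-\rho}(x)\;\subseteq\;\on{Min}_\del(gx)\;\subseteq\;g\cdot\on{Min}_{\del_0+\rho}(x),
$$
and hence, taking spans, $g\,\tb{V}_{\del_0-\rho}(x)\subseteq \tb{V}_\del(gx)\subseteq g\,\tb{V}_{\del_0+\rho}(x)$. The containment $\tb{V}_{\del_0-\rho}(x)\subseteq\tb{V}_{\del_0+\rho}(x)$ is automatic, and the hypothesis $\dim_{\del_0-\rho}(x)=\dim_{\del_0+\rho}(x)$ forces these two subspaces to coincide, hence to equal $\tb{V}_{\del_0}(x)$. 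All three terms in the sandwich must then coincide, which gives~\eqref{eq 1806} and simultaneously identifies the common dimension $k=\dim_{\del_0}(x)$.

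The main technical concern is that the perturbation estimate be uniform over \emph{all} subgroups $\Lam\subset x$ (not just those of bounded normalized covolume) and over all $x\in\Xn$; the singular-value formulation of the first step is precisely what delivers this uniformity, so all remaining ingredients are elementary.
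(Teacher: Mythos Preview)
Your proof is correct and follows essentially the same route as the paper's: choose a uniform distortion constant, establish the sandwich $g\cdot\on{Min}_{\del_0-\rho}(x)\subseteq\on{Min}_\del(gx)\subseteq g\cdot\on{Min}_{\del_0+\rho}(x)$, and collapse it using the dimension hypothesis. The only cosmetic difference is that you obtain the uniform bound $\av{g\Lam}^{1/r}\in[\sig_{\min}(g),\sig_{\max}(g)]\cdot\av{\Lam}^{1/r}$ via singular values, whereas the paper appeals to continuity of the $G$-action on $\bigoplus_k\bigwedge^k\bR^n$ to get a constant $c>1$ playing the role of your $(1+\vre)/(1-\vre)$; one minor point is that your $W$ as written (with closed brackets on the singular values) is not literally open, but replacing the brackets by strict inequalities fixes this immediately.
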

\begin{proof}
Let $c>1$ be chosen close enough to 1 so that for $2\rho \leq \delta_0
\leq d+1$ we have
\eq{eq: defn c}{c^2\left(1+\del_0+\frac{\rho}{2} \right) < 1+\del_0 +\rho \ \ \text{and
} \ \frac{1+\del_0-\frac{\rho}{2}}{c^2} > 
1+\del_0-\rho.}
Let $W$ be a small enough neighborhood of the identity
in $G$, so that for any discrete subgroup $\Lam \subset \bR^n$ we have 
\begin{equation}\label{22.2.2}
g\in W \ \ \implies \ \ c^{-1}\av{\Lam}^\frac{1}{r(\Lam)}\le
\av{g\Lam}^\frac{1}{r(g\Lam)}\le c\av{\Lam}^\frac{1}{r(\Lam)}. 
\end{equation}
Such a neighborhood exists since  the linear action of $G$
on $\bigoplus_{k=1}^n\bigwedge^k_1 \R^n$ is continuous, and since 
we can write $|\Lam| = \|v_1 \wedge \cdots \wedge v_r\|$
where $v_1, \ldots, v_r$ is a generating set for $\Lam$.
%
%
It follows from~\eqref{22.2.2} that for any $x\in \Xn$ and $g\in W$ we have 
\eq{22.2.3}{
c^{-1}\al(x)\le \al(gx)\le c\al(x).
}
 Let $\del\in \left(\del_0-\frac{\rho}{2}, \del_0+\frac{\rho}{2}
 \right)$ and $g\in W$. We will show below that 
 \begin{equation}\label{22.2.1'}
g\on{Min}_{\del_0-\rho}(x)\subset
\on{Min}_{\del}(gx)\subset
g\on{Min}_{\del_0+\rho}(x). 
\end{equation}
Note first that 
\equ{22.2.1'} implies the assertion of the Lemma; indeed, since
$\tb{V}_{\del_1}(x) \subset \tb{V}_{\del_2}(x)$ for $\delta_1 < \delta_2$, and since we assumed
that 
$\dim_{\delta_0-\rho}(x)=\dim_{\delta_0+\rho}(x)$, 
we see that 
$\tb{V}_{\del_0}(x)=\tb{V}_{\del}(x)$ for $\delta_0-\rho \leq \delta
\leq \delta_0+\rho$. So by \equ{eq: defn c}, the
subspaces spanned by the two sides of \eqref{22.2.1'}
are equal to $g\tb{V}_{\del_0}(x)$ and \eqref{eq 1806}
follows. 

It remains to prove~\eqref{22.2.1'}. Let
$\Lam\in\on{Min}_{\del_0-\rho}(x)$. Then we find 
\[
\begin{split}
\av{g\Lam}^{\frac{1}{r(g\Lam)}} & \stackrel{\eqref{22.2.2}}{\leq}
c\av{\Lam}^{\frac{1}{r(\Lam)}} \leq c(1+\delta_0 -\rho) \alpha(x) \\ &
\stackrel{\equ{eq: defn c}}{\le}
c^{-1}\left(1+\del_0-\frac{\rho}{2} \right)\al(x) \stackrel{\equ{22.2.3}}{<}(1+\del)\al(gx).
\end{split}\]
By definition this means that $g\Lam\in\on{Min}_{\del}(gx)$ which
establishes the first  inclusion in \eqref{22.2.1'}. The second
inclusion is similar and is left to the reader.  
\ignore{
For the second inclusion, let $\Lam \subset x$ such that
$g\Lam\in\on{Min}_{(\del)}(gx)$. Using the definition and~\eqref{22.2.2},\eqref{22.2.3} we conclude that 
\[\av{\Lam}^{\frac{1}{r(\Lam)}} \leq c\av{g\Lam}^{\frac{1}{r(g\Lam)}}\le
c(1+\del)\al(gx)< c^2 (1+\del_0+\frac{\rho}{2})\al(x).\]
I.e.\ $\Lam\in\on{Min}_{c^2(1+\del_0+\frac{\rho}{2})}(x)$ which
establishes the right inclusion in~\eqref{22.2.1'}. }
\end{proof}

\subsection{The cover of $A$} 
Let $x\in \Xn$ and let $\vre>0$ be given. Define
$\cU^{x,\vre}=\left\{U^{x,\vre}_i \right\}_{i=1}^n$ where 
\begin{equation}\label{the cover}
U^{x,\vre}_k\defi\set{a\in A: \on{dim}_\del(ax)=k\textrm{ for $\del$
    in a neighborhood of }k\vre}. 
\end{equation} 
\begin{theorem}\Name{order of cover}
Let $x\in \Xn$ be such that $Ax$ is bounded. Then for any $\vre \in (0,1)$, 
$U^{x,\vre}_n\neq \varnothing.$ 
\end{theorem}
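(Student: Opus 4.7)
The plan is to verify the hypotheses of McMullen's topological Theorem~\ref{topological input} for the open cover $\cU^{x,\vre}=\set{U^{x,\vre}_k}_{k=1}^n$ of $A$, so that its conclusion (in analogy with the Lebesgue covering lemma) forces $U^{x,\vre}_n$ to be nonempty. Three things must be checked: (i) the $U^{x,\vre}_k$ indeed cover $A$; (ii) each $U^{x,\vre}_k$ is open; (iii) the sets $U^{x,\vre}_k$ with $k<n$ satisfy the structural (``bounded in certain directions'') condition demanded by Theorem~\ref{topological input}.

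Parts (i) and (ii) are formal. For (i), fix $a\in A$ and observe that $\del\mapsto \dim_{\del}(ax)$ is a non-decreasing, right-continuous, $\set{1,\dots,n}$-valued step function of $\del>0$, hence has at most $n-1$ jumps. Among the $n$ values $\vre,2\vre,\dots,n\vre$ at least one must lie in the interior of a constancy interval, say with value $k$, which puts $a\in U^{x,\vre}_k$. For (ii), if $a\in U^{x,\vre}_k$ then $\dim_{\del}(ax)=k$ for $\del$ in some interval $(k\vre-\rho,k\vre+\rho)$; Lemma~\ref{for the inradius} (with $\delta_0=k\vre$) yields a neighborhood $\cO$ of the identity in $G$ such that $\dim_{\del}(gax)=k$ for all $g\in \cO$ and all $\del\in(k\vre-\rho/2,k\vre+\rho/2)$, so $\set{a'\in A : a'a^{-1}\in \cO}$ is an open neighborhood of $a$ contained in $U^{x,\vre}_k$.

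The main step (iii) is where the boundedness of $Ax$ enters, and is the principal difficulty. The picture: for $a\in U^{x,\vre}_k$ with $k<n$, Lemma~\ref{for the inradius} identifies $\tb{V}_{k\vre}(ax)=a\cdot W(a)$, where $W(a)\subset\R^n$ is a proper $k$-dimensional rational subspace spanned by a sublattice $\Lam(a)\subset x$ which nearly realizes the minimum, and $W(a)$ is locally constant in $a$. The membership condition $a\Lam(a)\in\on{Min}_{k\vre+\rho}(ax)$ forces $\av{a\Lam(a)}^{1/k}<1+k\vre+\rho$ (using $\al(\cdot)\le 1$). On the other hand, boundedness of $Ax$ and Mahler's criterion supply a uniform lower bound $\al(ax)\ge\vre_0>0$, so the rank-$k$ sublattices $a\Lam(a)$ are trapped in a compact subset of the associated Grassmannian fibration over $\overline{Ax}$. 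Fixing any rational $W\subset\R^n$ that arises and restricting to the locally constant piece $\{a\in U^{x,\vre}_k : W(a)=W\}$, the one-parameter subgroups of $A$ that expand $\bigwedge^k W$ must, after sufficiently long time, inflate $\av{a\Lam(a)}$ past the permissible bound $(1+k\vre+\rho)^k$, expelling $a$ from $U^{x,\vre}_k$. This is exactly the boundedness of $U^{x,\vre}_k$ in the ``expanding'' directions required by Theorem~\ref{topological input}; applying that theorem to $\cU^{x,\vre}$, viewed as an open cover of $A\cong\R^{n-1}$, then yields $U^{x,\vre}_n\neq\varnothing$. I expect the careful bookkeeping in step (iii) - precisely matching the boundedness picture with the directional hypothesis of Theorem~\ref{topological input} - to be the main obstacle.
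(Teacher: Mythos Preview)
Your approach is correct and matches the paper's. Parts (i) and (ii) are fine; the paper packages them as Lemma~\ref{lem: positive inradius}, which also notes that the neighborhood from Lemma~\ref{for the inradius} depends only on $\rho$ and not on the basepoint, so the openness is uniform and the cover has \emph{positive inradius} --- a hypothesis of Theorem~\ref{topological input} you do not explicitly name.

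In (iii) you have identified the correct picture but two concrete ingredients are missing. First, the hypothesis of Theorem~\ref{topological input} concerns connected components of intersections of $k$ \emph{distinct} elements of $\cU$, requiring each to be $(R,n-1-k)$-almost affine; you only discuss single $U_k$'s. The bridge is the trivial combinatorial observation that if $j_1<\cdots<j_k\le n-1$ then $j_1\le n-k$, so any component of $U_{j_1}\cap\cdots\cap U_{j_k}$ sits inside a component of $U_{j_1}$, and it suffices to prove each component of $U_j$ is $(R,j-1)$-almost affine (this is Lemma~\ref{flat things}). Second, your heuristic ``one-parameter subgroups expanding $\bigwedge^k W$ must escape'' does not by itself pin down the almost-affine dimension. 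Your bounds give $c\le |a\Lam|^{1/k}\le C$ on each component (your $\al(ax)\ge\vre_0$ is precisely the incompressibility lower bound), but showing that $\{a\in A:|a\Lam|^{1/k}\le C\}$ is $(R,j)$-almost affine with $j\le\gcd(k,n)-1\le k-1$ is a nontrivial black box: McMullen's \cite[Theorem~6.1]{McMullenMinkowski}, quoted in the paper as Theorem~\ref{finite distance from a group}. Once you invoke that, the argument closes exactly as you outline.
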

In this subsection we will reduce the proof of Theorem \ref{thm: main}
to Theorem \ref{order of cover}. This will be done via the following
statement, 
which could be interpreted as saying that a 
lattice satisfying $ \dim_{\delta}(x)=n
$ is `almost stable'. 

\begin{lemma}\label{not growing lemma}
For each $n$, there exists a positive function $\psi(\del)$ with
$\psi(\del) \to_{\del\to 0}0$, such that for any $x\in \Xn$,
\eq{eq: lemma first part}{\set{\Lam_i}_{i=1}^\ell\subset\on{Min}_{\del}(x) \ \implies \ 
\Lam_1+\dots+\Lam_\ell\in\on{Min}_{\psi(\del)}(x).
}
In particular, if $\dim_\del(x)=n$ then $\al(x)\ge (1+\psi(\del))^{-1}$.  
\end{lemma}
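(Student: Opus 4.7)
The key tool is the submodular inequality for lattice covolumes: for any two subgroups $\Lam_1,\Lam_2$ of a common lattice $x\in\Xn$, one has
\[
\av{\Lam_1+\Lam_2}\cdot\av{\Lam_1\cap\Lam_2}\le\av{\Lam_1}\cdot\av{\Lam_2},
\]
a standard fact which can be read off from the Gram--Schmidt picture already used in the proof of Lemma~\ref{lem: block stable is stable}. The plan is to build up the partial sums $M_j\defi\Lam_1+\cdots+\Lam_j$ one summand at a time, and control $\av{M_j}^{1/r(M_j)}$ by induction on $j$.

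Write $r_j\defi r(M_j)$, $s_j\defi r(\Lam_j)$, $c_j\defi r(M_{j-1}\cap\Lam_j)$, and abbreviate $\al\defi\al(x)$. By the hypothesis $\Lam_j\in\on{Min}_\del(x)$ we have $\av{\Lam_j}<((1+\del)\al)^{s_j}$, and by the definition of $\al(x)$ we have $\av{M_{j-1}\cap\Lam_j}\ge\al^{c_j}$. I would prove inductively that $\av{M_j}^{1/r_j}\le(1+\psi_j(\del))\al$, distinguishing two cases. In the \emph{rank-preserving} case $\on{span}(\Lam_{j+1})\subset\on{span}(M_j)$ (equivalently $r_{j+1}=r_j$), the inclusion $M_j\subset M_{j+1}$ of lattices of equal rank immediately gives $\av{M_{j+1}}\le\av{M_j}$, so we may take $\psi_{j+1}=\psi_j$. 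In the \emph{rank-increasing} case $r_{j+1}>r_j$, the submodular inequality yields
\[
\av{M_{j+1}}\le\frac{\av{M_j}\av{\Lam_{j+1}}}{\av{M_j\cap\Lam_{j+1}}}\le(1+\psi_j)^{r_j}(1+\del)^{s_{j+1}}\al^{\,r_{j+1}},
\]
using the rank identity $r_j+s_{j+1}-c_{j+1}=r_{j+1}$. Taking $r_{j+1}$-th roots, and using $r_j\le r_{j+1}$ together with $s_{j+1}\le r_{j+1}$ (the latter since $\Lam_{j+1}\subset M_{j+1}$), one obtains $\av{M_{j+1}}^{1/r_{j+1}}\le(1+\psi_j)(1+\del)\al$; so we may set $\psi_{j+1}=(1+\psi_j)(1+\del)-1$.

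The main obstacle to anticipate is that $\ell$ is a priori unbounded, so naive iteration of this recursion blows up. This is overcome by the observation that $r_j$ is non-decreasing and bounded by $n$, so the rank-increasing case occurs in at most $n$ steps of the whole induction. Starting from $\psi_0=0$ and applying the recursion at most $n$ times therefore yields the uniform bound $\psi(\del)\defi(1+\del)^n-1$, which manifestly tends to $0$ as $\del\to 0$.

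For the final assertion, if $\dim_\del(x)=n$ then $\tb{V}_\del(x)=\R^n$, so one can choose finitely many $\Lam_i\in\on{Min}_\del(x)$ whose sum $\Lam$ has rank $n$. The first part of the lemma gives $\av{\Lam}^{1/n}<(1+\psi(\del))\al(x)$. Since $\Lam$ is a full-rank subgroup of the unimodular lattice $x$ we have $\av{\Lam}\ge 1$, and hence $\al(x)\ge(1+\psi(\del))^{-1}$.
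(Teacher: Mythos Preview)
Your proof is correct and follows the same core strategy as the paper: induct via the submodular inequality $\av{\Lam+\Lam'}\cdot\av{\Lam\cap\Lam'}\le\av{\Lam}\cdot\av{\Lam'}$, using $\av{\Lam\cap\Lam'}\ge\al^{r(\Lam\cap\Lam')}$ to absorb the denominator. The paper organizes the induction slightly differently---splitting off $\Lam_1$ from $\Lam_2+\cdots+\Lam_\ell$ rather than building up partial sums---and handles unbounded $\ell$ by extracting a subsequence of at most $n$ subgroups with the same span; your rank-preserving/rank-increasing dichotomy is a cleaner way to see that the recursion is applied at most $n$ times, and it has the pleasant side effect of giving the explicit bound $\psi(\del)=(1+\del)^n-1$.
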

\begin{proof}
Let $\Lam,\Lam'$ be two discrete subgroups of $\bR^d$.
The following inequality is straightforward to 
prove via the Gram-Schmidt procedure for computing $|\Lam|$: 
\begin{equation}\label{volume formula}
\av{\Lam+\Lam'}\le\frac{\av{\Lam}\cdot\av{\Lam'}}{\av{\Lam\cap\Lam'}}.
\end{equation}
Here we adopt the convention that  $\av{\Lam\cap\Lam'}=1$ when
$\Lam\cap\Lam'=\set{0}$. 
Let $x\in \Xn$ and let $\set{\Lam_i}_{i=1}^\ell\subset
\on{Min}_{\del}(x)$. Assume first that  
$\ell\le n$. We prove by induction on $\ell$ the existence of a
function  $\psi_\ell(\del)\overset{\del\to0}{\lra}0$  
for which
$\Lam_1+\dots+\Lam_\ell\in\on{Min}_{\psi_\ell(\del)}(x)$. For
$\ell=1$ one can trivially pick $\psi_1(\del)=\del$.  
Assuming the existence of $\psi_{\ell-1}$,  set $\Lam=\Lam_1$,
$\Lam'=\Lam_2+\dots+\Lam_\ell$, $\al=\al(x)$ and note  
that $r(\Lam+\Lam')=r(\Lam)+r(\Lam')-r(\Lam\cap\Lam')$. We deduce
from~\eqref{volume formula} and the definitions that 
\begin{align}\label{eneq2007}
\nonumber \av{\Lam+\Lam'}&\le
\frac{\av{\Lam}\cdot\av{\Lam'}}{\av{\Lam\cap\Lam'}}
\le
\frac{\pa{(1+\del)\al}^{r(\Lam)}\pa{(1+\psi_{\ell-1}(\del))\al}^{r(\Lam')}}{\al^{r(\Lam\cap\Lam')}}\\  
&=(1+\del)^{r(\Lam)}(1+\psi_{\ell-1}(\del))^{r(\Lam')}\al^{r(\Lam+\Lam')}.
\end{align}
Hence, if we set 
$$\psi_\ell(\del)\defi \max
\pa{(1+\del)^{r(\Lam)}(1+\psi_{\ell-1}(\del))^{r(\Lam')}}^{\frac{1}{r(\Lam+\Lam')}}-1,$$
where  
the maximum is taken over all possible values of $r(\Lam), r(\Lam'),
r(\Lam+\Lam')$ then $\psi_\ell(\del)\lra_{\del\to 0}0$  
and~\eqref{eneq2007} implies that 
$\Lam+\Lam'\in\on{Min}_{\psi_\ell(\del)}(x)$ as desired. We take
$\psi(\del)\defi\max_{\ell=1}^n\psi_\ell(\del).$ 
Now if $\ell >n$  one can find a subsequence 
$1\le i_1<i_2\dots<i_d\le n$  such that
$r(\sum_{i=1}^\ell\Lam_i)=r(\sum_{j=1}^d\Lam_{i_j})$ and in
particular,  
$\sum_{j=1}^d\Lam_{i_j}$ is of finite index in $\sum_{i=1}^\ell\Lam_i$. From the first part of the 
argument we see that $\sum_{j=1}^d\Lam_{i_j}\in \on{Min}_{\psi(\del)}(x)$ and as the covolume of 
$\sum_{i=1}^\ell\Lam_i$ is not larger than that of $\sum_{j=1}^d\Lam_{i_j}$ we deduce that 
$\sum_{i=1}^\ell\Lam_i \in\on{Min}_{\psi_\ell(\del)}(x)$ as well.

To verify the last assertion, note that
when $\dim_\del(x)=n$, \equ{eq: lemma first part} implies the
existence of a finite index subgroup $x'$ 
of $x$ belonging to $\on{Min}_{\psi(\del)}(x)$. In particular, 
$1\leq \av{x'}^{\frac{1}{n}}\le(1+\psi(\del))\al(x)$ as desired.
\end{proof}

\ignore{
The following statement essentially 
says that a lattice $x$ satisfying $\dim_\del(x)=n$ with $\del$ small
can be considered to be `almost stable'.
\begin{corollary}\label{hitting stable cor}
If $x_j\in \Xn$ satisfies $\dim_{\del_j}(x_j)=n$ for some sequence
$\del_j \to 0$, then any accumulation point of 
$\set{x_j}$ is a stable lattice.
\end{corollary}
\begin{proof}
By Lemma~\ref{not growing lemma} we have  
$$1\ge
\limsup\al(x_j)\ge \liminf \al(x_j)\ge \lim (1+\psi(\del_j))^{-1}=1,$$ 
which shows that $\lim\al(x_j)=1$.  
The function $\al$ is continuous on $\Xn$ and therefore if $x$  is an
accumulation point of $\set{x_j}$ then $\al(x)=1$, i.e. $x$ is stable.
\end{proof}
}
\begin{proof}[Proof of Theorem~\ref{thm: main} assuming
  Theorem~\ref{order of cover}] 
By Proposition~\ref{copt red prop} we may assume that $Ax$ is
bounded. Let $\vre_j \in (0,1)$ so that $\vre_j \to_j 0$. By
Theorem~\ref{order of cover} we know that
$U^{x,\vre_j}_n 
\neq \varnothing$. This means there is a sequence $a_j\in A$ such that 
$\dim_{\del_j}(a_jx)=n$
where $\del_j=n\vre_j\to 0$. 
The sequence $\set{a_jx}$ is
  bounded, and hence has limit points, so passing to a subsequence we
  let $x' \df \lim a_jx.$ 
By Lemma~\ref{not growing lemma} we have  
$$1\ge
\limsup_j\al(a_jx)\ge \liminf_j \al(a_j x)\ge \lim_j (1+\psi(\del_j))^{-1}=1,$$ 
which shows that $\lim_j\al(a_j x)=1$.  
The function $\al$ is continuous on $\Xn$ and therefore $\al(x')=1$,
i.e. $x' \in \overline{Ax}$ is stable.
\end{proof}
\section{Covers of Euclidean space}\Name{establishing
  topological input}  
In this section we will prove Theorem \ref{order of cover}, thus
completing the proof of Theorem \ref{thm: main}. Our main
tool will be McMullen's 
Theorem~\ref{topological input}. Before stating it we introduce some
terminology. We fix an invariant metric on $A$, and let $R>0$ and $k \in \{0, \ldots, n-1\}$. 
\begin{definition}\Name{def: almost affine}
We say that a subset $U\subset A$ is $(R,k)$-\textit{almost affine} if it is
contained in an $R$-neighborhood of a coset of a connected $k$-dimensional 
subgroup of $A$. 
\end{definition}
\begin{definition}\Name{def: inradius}
An open cover $\cU$ of $A$ is said to have \textit{inradius} $r>0$ if
for any $a\in A$ there exists $U\in\cU$ such that $B_r(a)\subset
U$, where $B_r(a)$ denotes the ball in $A$ of radius $r$ around $a$. 
\end{definition}
\begin{theorem}[Theorem 5.1 of~\cite{McMullenMinkowski}]\Name{topological input}
Let $\cU$ be an open cover of $A$ with inradius $r>0$ and let
$R>0$. Suppose that for any $1\le k\le n-1$, 
every connected component $V$ of the intersection of  
$k$ distinct elements of $\cU$ is
$(R,(n-1-k))$-almost affine. Then there is 
a point in $A$ which belongs to  
at least $n$ distinct elements of $\cU$. In particular, there are at
least $n$ distinct non-empty sets in $\cU$. 
\end{theorem}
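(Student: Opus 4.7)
The plan is to argue by contradiction: assume that every point of $A$ lies in at most $n-1$ distinct members of $\cU$, and derive a topological contradiction from the fact that $A$ (being isomorphic to $\R^{n-1}$) has covering dimension $n-1$. The almost-affine hypothesis will serve as a compactness substitute, compensating for the fact that members of $\cU$ are a priori unbounded.

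Fix an isomorphism $A \cong \R^{n-1}$. The first step is to use the inradius hypothesis together with paracompactness to replace $\cU$ by a locally finite refinement whose inradius is still bounded below. A subordinate partition of unity $\{\rho_U\}_{U \in \cU}$ then defines a continuous ``nerve map'' $f : A \to |\mathrm{Nerve}(\cU)|$; under the contradictory assumption, the image of $f$ lies in the $(n-2)$-skeleton.

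The heart of the argument is to exploit the almost-affine hypothesis at every level $k = 1,\ldots,n-1$ simultaneously. For $k=1$ each $U$ is already a thickening of a codimension-one affine subspace $H_U$; for larger $k$, intersections of $k$ distinct members of $\cU$ are constrained to $R$-neighborhoods of affine subspaces of codimension $k$. Inductively in $k$, I would ``straighten'' the cover on a large ball $B_M \subset A$ by replacing each $U$ with an honest $R'$-neighborhood of $H_U$, using the inradius to preserve the covering property and the almost-affine hypotheses at each level to keep the nerve combinatorics intact. After straightening, the restricted cover becomes, up to homotopy, a central arrangement of thickened hyperplanes in $\R^{n-1}$, for which the multiplicity at the common point is directly at least $n$. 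Pulling back this point through the straightening then yields a point of multiplicity $n$ for the original $\cU$, contradicting the initial assumption.

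The main obstacle is this straightening step: one must verify that the perturbations of the individual $U$'s are compatible across all values of $k$, so that no accidental lower-multiplicity configurations arise. An equivalent and perhaps cleaner packaging is a degree argument: on a large sphere $\partial B_M \subset A$, the map $f|_{\partial B_M}: S^{n-2}\to |\mathrm{Nerve}(\cU)|^{(n-2)}$ is, by the almost-affine hypothesis, homotopic to the nerve map of a genuine central hyperplane arrangement in $\R^{n-1}$, which represents a nonzero top-dimensional class; on the other hand $f|_{\partial B_M}$ extends to $f|_{B_M}$, forcing this class to vanish, and producing the contradiction. In either formulation, the real work is to show that the full family of almost-affine hypotheses for $k=1,\ldots,n-1$ genuinely forces the combinatorics of $\cU$ near infinity to be those of a generic hyperplane arrangement of rank $n-1$, rather than some lower-dimensional degeneration.
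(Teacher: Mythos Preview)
Your proposal is a plan rather than a proof, and the gap you yourself flag---the straightening step---is a genuine one. The almost-affine hypothesis constrains each connected component of each $k$-fold intersection separately: each such component lies near \emph{its own} affine subspace, with no compatibility imposed across components or across different members of $\cU$. There is therefore no reason the large-scale combinatorics of $\cU$ should resemble a single central hyperplane arrangement, and your degree formulation inherits the same difficulty: to homotope the nerve map on $\partial B_M$ to that of an arrangement you would need exactly the global coherence you have not established. The inradius hypothesis does not supply it either; it only says the cover is coarse, not that its pieces are aligned.

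The paper's proof (given as the case $s=0$ of the more general Theorem~\ref{thm: covering}) avoids this issue by using the almost-affine hypothesis only through asymptotic dimension. An $(R,d)$-almost affine set lies in a tube around a $d$-dimensional affine subspace and hence has asymptotic dimension at most $d$; thus the components of the $k$-fold intersections are uniformly of asymptotic dimension $\le (n-1)-k$. Proposition~\ref{p2} then shows, by a downward induction on $k$, that under such bounds a locally finite cover of order $\le m$ can be refined to a \emph{uniformly bounded} open cover of order $\le m$. Once the cover is bounded, one places a large simplex in $\R^{n-1}$ so that every element of the refined cover misses at least one face, and Proposition~\ref{p3}---a Sperner/Lebesgue-type nerve argument---forces the order to be at least $n$, giving the contradiction. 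So the structural input is coarse-geometric (asymptotic dimension and a bounded refinement) rather than any rigidification to a hyperplane arrangement.
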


The hypotheses of McMullen's theorem were slightly weaker but the
version above is sufficient for our purposes. 
We give a different proof of 
Theorem \ref{topological input} in this paper; namely it follows from
the more general Theorem
\ref{thm: covering}, which is proved
in Appendix \ref{appendix: Levin}. 

\ignore{
\begin{proposition}\label{assumptions hold}
If $x\in \Xn$ has a bounded $A$-orbit and $\vre>0$ then the collection
$\cU^{x,\vre}$ is an open cover of $A$ with positive inradius  
such that at least one of the following two possibilities hold:
\begin{enumerate}
\item $U^{x,\vre}_n\ne\varnothing$.
\item The hypothesis of Theorem~\ref{topological input} are satisfied.
\end{enumerate}
\end{proposition}
\begin{proof}[Proof of Theorem~\ref{order of cover} assuming
  Proposition~\ref{assumptions hold}] 
By the Proposition the possibility that $U^{x,\vre}_d=\varnothing$ is
ruled out as if this is the case then we may apply  
Theorem~\ref{topological input} and deduce that $\cU^{x,\vre}$ must
contain at least $n$ non-empty sets and in particular,  
$U_n^{x,\vre}\ne\varnothing$ which contradicts our assumption.
\end{proof} 
}

\subsection{Verifying the hypotheses of Theorem \ref{topological
    input} }
Below we fix a compact set $K\subset \Xn$ and a lattice $x$ for which
$Ax\subset K$. Furthermore, we fix $\vre>0$ and denote  
the collection $\cU^{x, \vre}$ defined in~\eqref{the cover} by
$\cU=\set{U_i}_{i=1}^n$.
\begin{lemma}\Name{lem: positive inradius}
The collection $\cU$ forms an open cover of $A$ with positive inradius.
\end{lemma}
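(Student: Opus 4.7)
My plan is to verify three claims: each $U_k$ is open in $A$; the $U_k$ cover $A$; and the cover has positive uniform inradius. Openness follows directly from Lemma \ref{for the inradius}. If $a \in U_k$, then $\dim_\delta(ax) = k$ on some open interval $(k\vre - \eta, k\vre + \eta)$, and applying the lemma with $\delta_0 = k\vre$ and $\rho = \min(\eta, k\vre/2)$ produces a neighborhood $W$ of the identity in $G$ such that $\dim_\delta(gax) = k$ for $g \in W$ and $\delta$ near $k\vre$; thus $Wa \subset U_k$.

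For the cover property, fix $a$ and set $f(\delta) := \dim_\delta(ax)$: a non-decreasing step function on $(0, \infty)$ with values in $\{1, \ldots, n\}$ and $f(\delta) = n$ for large $\delta$. Let its jumps be $0 < \delta_1 < \cdots < \delta_m$ (so $m \leq n-1$) with $f \equiv v_i$ on $(\delta_i, \delta_{i+1}]$ and $v_0 < \cdots < v_m = n$. Producing $k$ with $a \in U_k$ reduces to finding an index $i$ with $v_i\vre \in (\delta_i, \delta_{i+1})$. Let $g(i)$ be the index with $v_i\vre \in (\delta_{g(i)}, \delta_{g(i)+1}]$ and $h(i) := g(i) - i$. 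Non-decreasingness of $g$ gives $h(i+1) \geq h(i) - 1$. If $n\vre > \delta_m$ then $h(m) = 0$ and $i = m$ works (yielding $a \in U_n$). Otherwise $h(m) < 0 \leq h(0)$, and since $h$ drops by at most one per step the zero set $I^* := \{i : h(i) = 0\}$ is non-empty. Suppose for contradiction every $i \in I^*$ satisfies $v_i\vre = \delta_{i+1}$, and set $i^* := \max I^*$. Then $v_{i^*+1}\vre > v_{i^*}\vre = \delta_{i^*+1}$ forces $g(i^*+1) \geq i^*+1$, and maximality of $i^*$ rules out equality, giving $g(i^*+1) \geq i^*+2$ and $h(i^*+1) \geq 1$. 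Since $h(m) < 0$ and $h$ drops by at most one, it must return to $0$ at some $j \in (i^*, m]$, contradicting $i^* = \max I^*$. Hence some $i \in I^*$ is interior and $a \in U_{v_i}$.

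For positive uniform inradius, I exploit the compactness of $K \supset Ax$. The cover argument produces, for each $y \in K$, a pair $(k_y, \eta_y)$ with $\dim_\delta(y) = k_y$ on $(k_y\vre - \eta_y, k_y\vre + \eta_y)$; Lemma \ref{for the inradius} shows the same $k_y$ works with width $\eta_y/2$ on an open neighborhood of $y$ in $\Xn$. A finite subcover of $K$ then yields a uniform $\rho_0 > 0$ and a choice function $y \mapsto k(y) \in \{1, \ldots, n\}$ with $\dim_\delta(y) = k(y)$ on $(k(y)\vre - \rho_0, k(y)\vre + \rho_0)$ for every $y \in K$. A final application of Lemma \ref{for the inradius} with $\rho = \rho_0$ produces a single neighborhood $W$ of the identity in $G$ independent of $a$, and any ball $B_r$ in $A$ contained in $W$ satisfies $B_r a \subset U_{k(ax)}$ for all $a \in A$, giving inradius $\geq r$. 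The main technical subtlety I anticipate is precisely this final paragraph: carefully verifying that the local cover data $(k_y, \eta_y)$ propagate via Lemma \ref{for the inradius} to a neighborhood of $y$ in $\Xn$, so that a finite subcover of the compact $K$ really does yield a single $\rho_0$.
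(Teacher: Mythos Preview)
Your proof is correct, but it takes a noticeably different route from the paper's. For the cover property, the paper gives a one-line argument: take $k_0$ to be the minimal $k$ with $\dim_{(k+\frac12)\vre}(ax) \leq k$ (which holds trivially for $k=n$); minimality then forces $\dim_\delta(ax) = k_0$ on the \emph{closed} interval $[(k_0-\tfrac12)\vre,(k_0+\tfrac12)\vre]$, so $a \in U_{k_0}$. Your step-function / index-chasing argument reaches the same conclusion but is more elaborate. The real payoff of the paper's choice is that the interval of constancy always has width $\vre$, independently of $a$: one may then apply Lemma~\ref{for the inradius} once with the fixed value $\rho=\vre/2$, obtaining a single neighborhood $W$ uniformly, and positive inradius follows immediately \emph{without any appeal to the compactness of $K$}. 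In your route the width $\eta_y$ may vary with $y$, so you (correctly) recover uniformity by a finite-subcover argument on $K$; this is valid, but it is worth noting that the paper's proof of this particular lemma does not use the boundedness hypothesis at all. One small technical point: when you finally invoke Lemma~\ref{for the inradius} with $\rho=\rho_0$, its hypothesis asks that $\dim_{\delta_0-\rho_0}(y)=\dim_{\delta_0+\rho_0}(y)$, which requires the \emph{closed} interval $[\delta_0-\rho_0,\delta_0+\rho_0]$ to lie inside your open interval of constancy; shrinking $\rho_0$ by a factor of two fixes this.
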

\begin{proof}
The fact that the sets $U_i\subset A$ are open follows readily from
the requirement in~\eqref{the cover} that $\on{dim}_\del$ is constant
in a neighborhood of $\del=k\vre$.  
Given $a\in A$, let $1\le k_0\le n$ be the minimal number $k$ for which
$\dim_{(k+\frac{1}{2})\vre}(ax)\le k$  
(this inequality holds trivially for $k=n$). 
From the minimality of $k_0$ we conclude that 
$\dim_\del(ax)=k_0$ for
any  
$\del\in \left[\pa{k_0-\frac{1}{2}}\vre,\pa{k_0+\frac{1}{2}}\vre \right]$. 
This
shows that $a\in U_{k_0}$ so 
$\cU$ is indeed a cover of $A$. 

We now show that the cover has positive inradius. 
Let $W \subset G$ be the open neighborhood of the identity obtained
from 
Lemma~\ref{for the inradius} for  $\rho \df \frac{\vre}{2}$.
Taking $\delta_0 \df k_0 \vre$ we find that 
for any 
$g\in W$,   
$\del\in \pa{\pa{k_0-\frac{1}{4}}\vre,\pa{k_0+\frac{1}{4}}\vre}$ we
have that $\dim_\del(gax)=k_0$. This shows that  
$(W\cap A)a\subset U_{k_0}$. Since $W\cap A$ is an open neighborhood
of the identity in $A$ and the metric on $A$ is invariant under  
translation by elements of $A$, there exists $r>0$ 
(independent of $k_0$ and $a$) so that $B_r(a)\subset U_{k_0}$. In
other words, the inradius of $\cU$ is positive as desired. 
\end{proof}

The following will be used for verifying the second hypothesis of
Theorem~\ref{topological input}.
\begin{lemma}\Name{flat things}
There exists $R>0$ such that any connected component of $U_k$ is $(R,k-1)$-almost affine.
\end{lemma}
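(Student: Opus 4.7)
The plan is as follows.

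By Lemma~\ref{for the inradius}, applied with $\rho=\vre/2$ and $\del_0=k\vre$, the function $a\mapsto a^{-1}\tb V_{k\vre}(ax)$ is locally constant on $U_k$, hence constant on the connected set $C$. Let $V_0\subset\R^n$ denote its value; it is spanned by vectors of $x$, so $\Lam_0\defi x\cap V_0$ is a primitive rank-$k$ sublattice. By Lemma~\ref{not growing lemma}, the sum of subgroups in $\on{Min}_{k\vre}(ax)$ lies in $\on{Min}_{\psi(k\vre)}(ax)$, spans $aV_0$, and has finite index in $ax\cap aV_0=a\Lam_0$; consequently $|a\Lam_0|^{1/k}\le 1+\psi(k\vre)$, while the matching lower bound $|a\Lam_0|^{1/k}\ge\al_0\defi\inf_K\al>0$ comes from the compactness hypothesis $Ax\subset K$.

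Choose a basis $v_1,\ldots,v_k$ of $\Lam_0$ and set $w\defi v_1\wedge\cdots\wedge v_k\in\bigwedge^k\R^n$. Since $|a\Lam_0|=\|aw\|$ is bounded above and below uniformly on $C$, the vector $aw$ stays in a fixed compact annulus of $\bigwedge^k\R^n\sm\{0\}$. Hence the projective class $[aw]\in\bP(\bigwedge^k\R^n)$ remains in a compact subset of the $A$-orbit $A\cdot[w]$. Since the $A$-stabilizer of $[w]$ equals the subtorus
$$A_{V_0}\defi\{a\in A:aV_0=V_0\}$$
(using that $w$ is the Pl\"ucker vector of $V_0$), and the orbit map $A/A_{V_0}\to A\cdot[w]$ is proper, $C$ is contained in an $R$-neighborhood of a single coset of $A_{V_0}$.

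It remains to prove $\dim A_{V_0}\le k-1$; this is the main obstacle. The key input is the compactness of $Ax$, which makes $A_x$ cocompact in $A$, so that $A_{V_0}\cap A_x$ is cocompact in the closed subgroup $A_{V_0}$ and, being a discrete subgroup of the torus $A_{V_0}$, has rank equal to $\dim A_{V_0}$. Any $a\in A_{V_0}\cap A_x$ preserves $\Lam_0=x\cap V_0$, so in a basis of $V_0$ simultaneously diagonalizing the abelian group $A_{V_0}$ acting on $V_0$, the restriction $a|_{V_0}$ is a positive diagonal matrix of determinant one preserving $\Lam_0$. A Dirichlet-type regulator argument (the logarithmic map has finite kernel and image lying in the hyperplane of $\R^k$ cut out by $\sum x_i=0$) bounds the rank of such a group by $k-1$. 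Combining, $\dim A_{V_0}\le k-1$, and the conclusion follows.
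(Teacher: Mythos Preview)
Your first two paragraphs are essentially correct and match the paper's setup: the local constancy of $a\mapsto a^{-1}\tb V_{k\vre}(ax)$ on $U_k$ via Lemma~\ref{for the inradius}, the upper bound $|a\Lam_0|^{1/k}\le 1+\psi(k\vre)$ from Lemma~\ref{not growing lemma}, and the uniform positive lower bound coming from $Ax\subset K$. The gap is in the last paragraph. You assert that ``the compactness of $Ax$ \ldots makes $A_x$ cocompact in $A$,'' but the standing hypothesis is only that $Ax$ is \emph{bounded} (contained in the compact set $K$), not that the orbit itself is compact. For a generic lattice with bounded $A$-orbit (for $n=2$, take $x$ corresponding to a badly approximable number that is not a quadratic irrational) the stabilizer $A_x$ is trivial, and your regulator argument collapses, leaving the bound $\dim A_{V_0}\le k-1$ unproved. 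There are further difficulties even granting a cocompact $A_x$: the claim that $A_{V_0}\cap A_x$ is cocompact in $A_{V_0}$ would require $A_{V_0}$ to be rational with respect to that lattice (a line of irrational slope meets $\Z^2$ trivially); and the passage from ``$aw$ lies in a compact annulus'' to ``$C$ lies in an $R$-neighborhood of a coset of $A_{V_0}$'' needs the orbit $A\cdot w$ to be closed in the ambient space and the resulting $R$ to be uniform over all connected components --- neither is addressed.

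The paper avoids all of this by invoking McMullen's Theorem~\ref{finite distance from a group} as a black box: once one knows that $\Lam_0$ is incompressible with $c(\Lam_0)\ge c>0$ (from $Ax\subset K$, via Mahler and Minkowski) and that $|a\Lam_0|^{1/k}\le C\df 1+\psi(k\vre)$ for every $a$ in the component, that theorem directly yields that the component is $(R,j)$-almost affine for some $j\le\gcd(k,n)-1\le k-1$, with $R$ depending only on $c$ and $C$ and hence uniform. What your third paragraph attempts is, in effect, a reproof of McMullen's theorem under weaker information, and that is where the genuine content lies.
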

\begin{definition}\label{cv}
For a discrete subgroup $\Lam \subset \bR^d$ of rank $k$, 
let $$c(\Lam)\defi\inf\set{\av{a\Lam}^{1/k}:a\in A},$$ and say that
$\Lam$ is {\it incompressible} if $c(\Lam)>0$. 
\end{definition}
Lemma~\ref{flat things} follows from:
\begin{theorem}[{\cite[Theorem 6.1]{McMullenMinkowski}}]\Name{finite
    distance from a group} 
For any positive $c,C$ there exists $R>0$ such that if 
$\Lam \subset \bR^n$ is an incompressible discrete subgroup of rank
$k$ with $c(\Lam)\ge c$ then  
$\set{a\in A: \av{a\Lam}^{1/k}\le C}$ is $(R,j)$-almost affine for some $j\le
\gcd(k,n)-1$. 
\end{theorem}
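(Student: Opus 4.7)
My plan is to pass to the exterior algebra $\bigwedge^k \R^n$ and reduce the theorem to a combinatorial fact about stabilizers of pure wedges in the diagonal torus.

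Choose generators $v_1,\ldots,v_k$ of $\Lam$ and set $w := v_1\wedge\cdots\wedge v_k$; this pure wedge determines $\Lam$ up to sign, and the standard inner product on $\bigwedge^k\R^n$ gives $\av{a\Lam} = \|aw\|$. In the basis $\{e_I\}$ indexed by $k$-subsets $I\subset\{1,\ldots,n\}$, write $w = \sum_I w_I e_I$, so that $\|aw\|^2 = \sum_I \chi_I(a)^2 w_I^2$ where $\chi_I(a) = \prod_{i\in I} a_{ii}$. Passing to logarithmic coordinates $t_i = \log a_{ii}$, which identifies $A$ with $\goa := \{t\in\R^n : \sum_i t_i = 0\}$, the set $\{a : \av{a\Lam}^{1/k} \leq C\}$ becomes the sublevel set $\{t : F(t) \leq 2k\log C\}$ of the convex function $F(t) = \log\sum_I w_I^2 e^{2\chi_I(t)}$, where $\chi_I(t) = \sum_{i\in I} t_i$ is now a linear functional on $\goa$. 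Setting $S := \supp(w)$, this sublevel set is contained in the polytope $P := \bigcap_{I\in S}\{t : \chi_I(t) \leq k\log C - \log|w_I|\}$.

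A preliminary observation, which I would establish next, is that incompressibility of $\Lam$ is equivalent to $0 \in \conv\{\chi_I : I \in S\} \subset \goa^*$. Necessity is Hahn--Banach: a separating $t_0$ gives $\chi_I(t_0) < 0$ for all $I \in S$, whence $\|e^{st_0}w\|\to 0$ as $s\to\infty$. Sufficiency uses that when $0$ lies in the convex hull, $\max_{I\in S}\chi_I(t) \geq 0$ for every $t \in \goa$, giving $\|e^tw\| \geq \min_{I\in S}|w_I| > 0$.

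The flat direction of $P$ is $V^\perp := \bigcap_{I\in S}\ker\chi_I \subset \goa$: for $t \in V^\perp$ we have $e^tw = w$, so $P$ is translation-invariant under $V^\perp$ and is therefore $(R,\dim V^\perp)$-almost affine once $R$ is large enough. The central claim is that $\dim V^\perp \leq \gcd(k,n) - 1$, and this is where purity of $w$ is essential. Let $A' \subset A$ be the connected subgroup with Lie algebra $V^\perp$. As a subtorus of the diagonal group, $A'$ has joint eigenspaces $E_j := \spa(e_i : i \in B_j)$ for some partition $B_1,\ldots,B_r$ of $\{1,\ldots,n\}$ with $n_j := |B_j|$. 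Since $A'$ fixes $w$, it preserves the subspace $W := \spa(v_1,\ldots,v_k)$, and decomposes $W = \bigoplus_j W_j$ with $W_j := W\cap E_j$. Setting $k_j := \dim W_j$, expansion of $w$ as a wedge of bases of the $W_j$ shows that every $I \in S$ satisfies $|I\cap B_j| = k_j$ for all $j$. Incompressibility then forces the vector $(kn_1/n,\ldots,kn_r/n)$ to lie in $\conv\{(|I\cap B_j|)_j : I\in S\} = \{(k_j)_j\}$, hence $k_j = kn_j/n$ for every $j$. In particular each $n_j$ is divisible by $n/\gcd(k,n)$, so $r \leq \gcd(k,n)$ and $\dim V^\perp = r - 1 \leq \gcd(k,n) - 1$.

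It remains to bound $R$ uniformly in terms of $c$ and $C$. By translating $\Lam$ by an appropriate $a_0 \in A$ one may assume $\|w\|$ is close to $c(\Lam)^k$, so that $c^k \lesssim \|w\| \leq C^k$ (assuming the sublevel set is nonempty, which forces $c(\Lam)\leq C$). The function $F$ descends to a strictly convex function $\bar F$ on $\goa/V^\perp$ whose minimum equals $2k\log c(\Lam) \geq 2k\log c$, so the height of the sublevel set above $\min\bar F$ is at most $2k\log(C/c)$; uniformity of the transverse diameter then follows by compactness of the space of normalized pure wedges with each fixed support (modulo the $A'$-action, which preserves $\bar F$), of which there are only finitely many combinatorial types. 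The main obstacle is the dimension bound above: deriving $k_j = kn_j/n$ uses both purity of $w$ (to get $W = \bigoplus_j W_j$ and so a constant block profile on $S$) and incompressibility (to place the barycenter $(kn_j/n)_j$ inside the one-point set $\{(k_j)_j\}$), and it is the interplay of these two structural inputs—rather than general convex geometry—that yields the $\gcd(k,n)-1$ bound.
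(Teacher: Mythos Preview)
The paper does not prove this statement—it is quoted from \cite{McMullenMinkowski} and used as a black box in the proof of Lemma~\ref{flat things}—so there is no in-paper argument to compare against. Your outline is very much in the spirit of McMullen's proof, and the combinatorial core is correct and nicely explained: once one knows that $A':=\exp V^\perp$ fixes $w$ and hence preserves $W$, the block decomposition $W=\bigoplus_j W_j$ together with incompressibility forces $k_j=kn_j/n$, whence $r\le\gcd(k,n)$ and $\dim V^\perp=r-1\le\gcd(k,n)-1$.

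There is, however, a genuine gap. You write that $P$ ``is translation-invariant under $V^\perp$ and is therefore $(R,\dim V^\perp)$-almost affine once $R$ is large enough,'' but translation-invariance under $V^\perp$ only says that $P$ is a union of $V^\perp$-cosets; it does not say $P$ lies near a \emph{single} coset. For that you need the sublevel set of $F$ to be bounded in $\goa/V^\perp$, which is the same as saying that the recession cone $\{t:\chi_I(t)\le 0\text{ for all }I\in S\}$ equals $V^\perp$. Your incompressibility criterion $0\in\conv\{\chi_I:I\in S\}$ only identifies the \emph{lineality space} of this cone; it does not rule out extra half-rays. Excluding them is equivalent to $0$ lying in the \emph{relative interior} of $\conv\{\chi_I\}$, i.e.\ to the $\chi_I$ positively spanning $(\goa/V^\perp)^*$, i.e.\ to the infimum $\inf_a\|aw\|$ actually being attained. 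This is a nontrivial structural fact about incompressible \emph{pure} wedges (purity is again essential), and it requires its own argument, which your proposal does not supply.

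The uniformity paragraph inherits the same lacuna. The set of pure wedges with a prescribed support $S$ and bounded norm is not compact (individual coordinates $|w_I|$ may tend to $0$), so ``compactness of the space of normalized pure wedges with each fixed support'' is not available as stated; and without first knowing that a minimizer of $a\mapsto\|aw\|$ exists you cannot translate to make $\|w\|=c(\Lam)^k$ and obtain the two-sided control $c^k\le\|w\|\le C^k$ that your argument uses.
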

\ignore{
\begin{lemma}\label{ending lemma}
There are positive constants $c,C$ such that if  $V\subset U_k$ is a
connected component, then there exists 
$\Lam \subset x$ with $c(\Lam)>c$ such that  $V\subset\set{a\in A: \av{a\Lam}^{1/k}\le C}$.
\end{lemma} 

The following Lemma gives us the lower bound $c$ that appears in
Lemma~\ref{ending lemma}. This is the only place in the proof 
where the boundedness of the orbit $Ax$ really necessary.
\begin{lemma}\label{why bounded}
There exists a constant $c>0$ (that depends only on the compact set
$K$ which contains $Ax$), such that  
for any discrete subgroup $\Lam \subset x$ we have that $c(\Lam)\ge c$.
\end{lemma}
\begin{proof}
Let $\rho>0$ be a lower bound for the lengths of non-zero vectors
belonging to the lattices in $K$ (by Mahler's criterion 
the compactness of $K$ implies the existence of such $\rho$). Observe
that there is an upper bound $\ell_d$ (related to the  
so called Hermite constants) on the lengths of the shortest non-zero
vectors of discrete subgroups $\Lam<\bR^d$ satisfying
$\av{\Lam}=1$. This in turn implies that for $a\in A$, $\Lam<x$ we
must have that $\rho\av{a\Lam}^{-1/r(\Lam)}\le\ell_d$,  
or equivalently $\frac{\rho}{\ell_d}\le\av{a\Lam}^{1/r(\Lam)}$, which
concludes the proof.
\end{proof}

For any $1\le k\le d$, write $\tb{gr}_k$ for the Grassmannian of
$k$-dimensional subspaces of $\bR^d$.  
Define a map $\cM:U_k\to \tb{gr}_k$ by 
$$U_k\ni a\mapsto \cM(a)\defi a^{-1}\tb{V}_{(1+k\vre)}(ax).$$ 
\begin{lemma}\Name{locally constant}
The function $\cM$ is locally constant on $U_k$.
\end{lemma}
\begin{proof}
By definition, the fact that $a_0\in U_k$ means that there exists
$0<\rho<k\vre$ such that 
$\dim_\del(a_0x)=k$ for any $\del\in(k\vre-\rho,k\vre+\rho)$. Applying
Lemma~\ref{for the inradius}  
for the lattice $a_0x$ with $\rho$ and $\del_0=k\vre$
we see by~\eqref{eq 1806} that for any $a$ in a certain neighborhood
of the identity  
\begin{align*}
\cM(aa_0)&=a_0^{-1}a^{-1}\tb{V}_{(1+k\vre)}(aa_0x)
=a_0^{-1}\tb{V}_{(1+k\vre)}(a_0x)=\cM(a_0).
\end{align*}
\end{proof}

\begin{proof}[Proof of Lemma~\ref{ending lemma}]
Let $c=\inf c(\Lam)$ where the infimum 
is taken over all discrete subgroups  $x$. By Lemma~\ref{why bounded} we have that 
$c>0$.

Let $\Lam=\cM(a)\cap x$ where $a\in V$ is chosen arbitrarily. By
Lemma~\ref{locally constant} $\Lam$ is independent of the choice of
$a\in V$.  
Given $a\in V$, by the definition of $\Lam$ and $\cM(a)$ we see that 
\begin{align*}
a\Lam&=a(x\cap\cM(a)) =a(x\cap a^{-1}\tb{V}_{(1+k\vre)}(ax))=ax\cap\tb{V}_{(1+k\vre)}(ax).
\end{align*}
By Lemma~\ref{not growing lemma} we have that 
\begin{align*}
\av{a\Lam}^{1/k}=\av{
  ax\cap\pa{\tb{V}_{(1+k\vre)}(ax)}}^{1/k}<(1+\psi(k\vre))\al(ax)\le
C, 
\end{align*}
where $C$ is an absolute constant that depends only on the dimension
$d$ (because $\al$ is bounded by 1 and $\psi$ is bounded  
and depends only on $d$). This finishes the proof of the Lemma and by
that concludes the proof of Proposition~\ref{assumptions hold} as
well. 

\end{proof}
}
\begin{proof}[Proof of Lemma~\ref{flat things}]
We first claim that there exists $c>0$ such that  
for any discrete subgroup $\Lam \subset x$ we have that $c(\Lam)\ge
c$. To see this, recall that $Ax$ is contained in a compact subset
$K$, and hence by Mahler's compactness criterion,  there is a positive
lower bound on 
the length of any non-zero vector
belonging to a lattice in $K$. 
On the other
hand, Minkowski's convex body theorem shows that the shortest nonzero
vector in a discrete subgroup $\Lambda \subset \R^n$ is bounded above
by a constant multiple of $|\Lam|^{1/r(\Lam)}$. This implies the
claim.

In light of Theorem \ref{finite
    distance from a group}, it suffices to show that there is $C>0$
such that  if $V\subset U_k$ is a
connected component, then there exists 
$\Lam \subset x$ such that  $V\subset\set{a\in A: \av{a\Lam}^{1/k}\le C}$.
For any $1\le k\le n$, write $\tb{gr}_k$ for the Grassmannian of
$k$-dimensional subspaces of $\bR^n$.  
Define
$$
\cM:U_k\to \tb{gr}_k, \ \ 
\cM(a)\defi a^{-1}\tb{V}_{k\vre}(ax).$$ 
Observe that $\cM$ is locally constant on $U_k$. Indeed, by
definition of $U_k$, 
for $a_0\in U_k$ there exists 
$0<\rho< \frac{\vre}{2}$ such that 
$\dim_\del(a_0x)=k$ for any $\del\in(k\vre-\rho,k\vre+\rho)$. Applying
Lemma~\ref{for the inradius}  
for the lattice $a_0x$ with $\rho$ and $\del_0=k\vre$
we see 
that for any $a$ in a neighborhood
of the identity in $A$, 
\begin{align*}
\cM(aa_0)&=a_0^{-1}a^{-1}\tb{V}_{k\vre}(aa_0x)
=a_0^{-1}\tb{V}_{k\vre}(a_0x)=\cM(a_0).
\end{align*}

Now let $\Lam \df x\cap \cM(a)$ where $a\in V$; $\Lam$ is well-defined
since $\cM$ is locally
constant. 
Then 
for $a \in V$, 
\begin{align*}
a\Lam&=a(x\cap\cM(a)) =a(x\cap a^{-1}\tb{V}_{k\vre}(ax))=ax\cap\tb{V}_{k\vre}(ax).
\end{align*}
By Lemma~\ref{not growing lemma} we have that 
\begin{align*}
\av{a\Lam}^{1/k}=\av{
  ax\cap \tb{V}_{k\vre}(ax)}^{1/k}<(1+\psi(k\vre))\al(ax). 
\end{align*}
Since $\alpha(ax) \leq 1$ we may take $C \df 1+\psi(k\vre)$ to
complete the proof. 
%
\end{proof}

\begin{proof}[Proof of Theorem \ref{order of cover}]
Assume by contradiction that $Ax$ is bounded but $U_n^{x, \vre} =
\varnothing$ for some $\vre \in (0,1)$. Then by Lemma \ref{lem: positive inradius}, 
$$\cU \df \left \{U_1,
\ldots, U_{n-1} \right \}, \text{ where } U_j \df U_j^{x, \vre}, $$  
is a cover of $A$ of positive inradius. Moreover, if $V$ is a
connected component of $U_{j_1} \cap \cdots \cap U_{j_k}$ with $j_1 < \cdots < j_k \leq n-1$, 
then $V_k \subset U_{j_1}$ and $j_1 \leq n-k$. So in
light of Lemma \ref{flat things}, 
the hypotheses of Theorem~\ref{topological input} are satisfied.
We 
deduce that $\cU = \left\{U_1, \ldots, U_{n-1} \right \}$
contains at least $n$ elements, which is impossible. 
\end{proof}

\section{Bounds on Mordell's constant}\Name{sec: rankin bounds} 
In analogy with~\eqref{alpha} we define for
any $x\in \Xn$ and $1\le k\le n$,  
\begin{align}\Name{eq: k quantities}
\cV_k(x)&\defi\set{\av{\Lam}^{1/r(\Lam)}:\Lam \subset x, r(\Lam)=k},\\ 
\al_k(x)&\defi\min\cV_k(x). 
\end{align}

The following is clearly a consequence of Theorem \ref{thm: main}:
\begin{cor}\Name{cor: Euclidean}
For any $x \in \Xn$, any $\vre>0$  and any $k \in \{1, \ldots, n\}$ there is $a \in
A$ such that $\alpha_k(ax) \geq 1-\vre$. 
\end{cor}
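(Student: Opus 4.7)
The plan is to deduce the Corollary directly from Theorem~\ref{thm: main} by invoking stability together with a continuity argument for $\alpha_k$.

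First, apply Theorem~\ref{thm: main} to the given $x\in\Xn$ to obtain a stable lattice $y\in\overline{Ax}$. By the very definition of stability, every subgroup $\Lam\subset y$ satisfies $|\Lam|^{1/r(\Lam)}\ge 1$; specializing to subgroups of rank exactly $k$ gives $\alpha_k(y)\ge 1$. Thus the claim reduces to showing that whenever $a_j x\to y$ with $a_j\in A$, one has $\alpha_k(a_j x)\to \alpha_k(y)$, for then $\alpha_k(a_j x)>1-\vre$ for all sufficiently large $j$.

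The remaining step is the continuity of $\alpha_k$ on $\Xn$, which I would justify by a direct repetition of the estimates used in Lemma~\ref{for the inradius}. Explicitly, for a small neighborhood of the identity $W\subset G$ and $g\in W$, the bound
\[
c^{-1}|\Lam|^{1/r(\Lam)}\le |g\Lam|^{1/r(g\Lam)}\le c\,|\Lam|^{1/r(\Lam)}
\]
(with $c\to 1$ as $W$ shrinks) gives $c^{-1}\alpha_k(x)\le \alpha_k(gx)\le c\,\alpha_k(x)$. Passing this through the convergence $a_j x\to y$ yields $\alpha_k(a_j x)\to\alpha_k(y)$. The only subtlety is that $\alpha_k$ is defined as a minimum over the discrete set $\cV_k(x)$, but this minimum is attained because a Minkowski-type bound together with the fact that only finitely many rank-$k$ primitive sublattices have covolume below a fixed threshold ensures the minimum is realized, and the bi-Lipschitz estimate above transports minimizers correctly.

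I do not expect any genuine obstacle here: Theorem~\ref{thm: main} does all the real work, and the observation that stability controls every $\alpha_k$ simultaneously is immediate from the definition. The only point requiring a short verification is the continuity of $\alpha_k$, and that follows verbatim from the covolume comparison estimate already used in Section~2.
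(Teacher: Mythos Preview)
Your proposal is correct and matches the paper's approach: the paper simply states that the corollary ``is clearly a consequence of Theorem~\ref{thm: main}'' without further argument, and your proof is precisely the natural unpacking of that claim---pick a stable $y\in\overline{Ax}$, note $\alpha_k(y)\ge 1$, and invoke continuity of $\alpha_k$ (which indeed follows from the covolume bi-Lipschitz estimate~\eqref{22.2.2}) to find $a\in A$ with $\alpha_k(ax)>1-\vre$. Nothing is missing; if anything you have written out more than the paper does.
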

As the lattice $x = \Z^n$ shows, the constant 1 appearing in
this corollary cannot be improved for any $k$. Note also that the case
$k=1$ of 
Corollary \ref{cor: Euclidean}, although not stated explicitly in
\cite{McMullenMinkowski}, could be derived easily from McMullen's results in
conjunction with \cite{BirchSD}. 

\begin{proof}[Proof of Corollary \ref{cor: Ramharter conj}]
 Since the $A$-action maps a symmetric box $\mathcal{B}$ to a
 symmetric box of the same volume, the function $\kappa : \Xn \to \R$
 in \equ{eq: defn const} is $A$-invariant. By the case $k=1$ of
 Corollary \ref{cor: Euclidean}, for any $\vre>0$ and any $x \in \Xn$
 there is $a \in A$ such that $ax$ does not contain nonzero vectors of
 Euclidean length at most $1-\vre$, and hence does not contain nonzero vectors
 in the cube $\left [-\left(\frac{1}{\sqrt{n}} - \vre\right),
 \left(\frac{1}{\sqrt{n}} - \vre\right) \right ]^n$. This implies that
$\kappa(x) \geq \left(\frac{1}{\sqrt{n}} \right)^n$, as claimed. 
\end{proof}

We do not know whether the bound $\kappa_n \geq n^{-n/2}$ is
asymptotically optimal. However, it is not optimal for any fixed
dimension $n$:
\begin{proposition} \Name{prop: not optimal}
For any $n$, $\kappa_n > n^{-n/2}$. 
\end{proposition}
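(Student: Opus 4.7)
The plan is to deduce $\kappa_n > n^{-n/2}$ from the stronger statement that every stable lattice $x^* \in \Xn$ satisfies $\kappa(x^*) > n^{-n/2}$, via a compactness argument. I would first record two soft properties of $\kappa$: it is $A$-invariant, since the diagonal group preserves the volume of symmetric boxes; and it is lower semicontinuous on $\Xn$, since any admissible box $\BB$ for $x$ can be slightly shrunk so that its closure contains no nonzero points of $x$, a property stable under small perturbations of the lattice. Combined with Theorem \ref{thm: main}, these two properties imply that if $x_0$ realizes the minimum $\kappa_n$, then $\overline{Ax_0}$ contains a stable lattice $x^*$ with $\kappa(x^*) = \kappa_n$. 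It therefore suffices to show $\kappa(x^*) > n^{-n/2}$ for every stable $x^*$.

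Fix such $x^*$ and call $v \in x^* \setminus\{0\}$ a \emph{corner vector} if $|v_i| = 1/\sqrt{n}$ for each $i$. Stability yields a useful dichotomy: any nonzero $v \in x^*$ with $\max_i|v_i| \leq 1/\sqrt{n}$ satisfies $\|v\|^2 \leq 1$ and, by stability, $\|v\|^2 \geq 1$, forcing equality throughout and showing that $v$ must be a corner vector. By discreteness of $x^*$, the quantity
\[
\eta \df \inf\left\{\max_i \log\bigl(\sqrt{n}\,|v_i|\bigr) : v \in x^* \setminus\{0\} \text{ non-corner}\right\}
\]
is strictly positive: any sequence approaching the infimum stays bounded in norm and hence must eventually be constant by discreteness, producing in the limit a corner vector and contradicting the non-corner hypothesis.

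With $\eta > 0$ in hand, set $\varepsilon = \eta/2$ and consider the box $\BB$ with half-sides $a_i = (1/\sqrt{n})\,e^{t_i}$, where $t_1 = \cdots = t_{n-1} = \varepsilon$ and $t_n = -(n-1)\varepsilon/2$. Since $\sum_i t_i = (n-1)\varepsilon/2 > 0$, one obtains $\Vol(\BB) = 2^n n^{-n/2} e^{(n-1)\varepsilon/2} > 2^n n^{-n/2}$. The box is admissible for $x^*$: every corner vector satisfies $|v_n| = 1/\sqrt{n} > a_n$ (since $t_n < 0$), while every non-corner $v$ has some index $i_0$ with $\log(\sqrt{n}\,|v_{i_0}|) \geq \eta > \varepsilon \geq t_{i_0}$, so $|v_{i_0}| \geq a_{i_0}$; in both cases $v$ lies outside the open box $\BB$. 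This contradicts $\kappa(x^*) = n^{-n/2}$ and completes the argument. The main obstacle I anticipate is making the discreteness argument for the gap $\eta > 0$ precise, since it requires combining stability with the local finiteness of $x^*$; once this is secure, the choice of the enlarged box is essentially forced by the observation that the extremal cube can only be blocked from uniform expansion by corner vectors, which in turn can be defeated by slightly contracting a single coordinate at a sub-linear rate.
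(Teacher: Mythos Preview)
Your proof is correct and follows essentially the same approach as the paper: reduce to a stable lattice via $A$-invariance and lower semicontinuity of $\kappa$, observe that stability forces any nonzero lattice point in the closed cube $[-1/\sqrt n,1/\sqrt n]^n$ to be a corner vector, and then use discreteness to perturb the cube into a strictly larger admissible box. The only difference is cosmetic: the paper simply enlarges one side of the cube (corner vectors remain on the boundary in the other $n-1$ coordinates, hence outside the open box), whereas you shrink one side and enlarge the rest; the paper's choice is slightly simpler and avoids the need to balance the $t_i$.
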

\begin{proof}
It is clear from the definition of the functions $\kappa$ and
$\alpha_k$ that if $x_j \to x_0$ in $\Xn$, then 
$$\kappa(x_0) \leq \liminf_j \kappa(x_j) \ \ \text{and } \
\alpha_k(x_0) \geq \limsup_j \alpha_k(x_j).$$
A simple compactness argument implies that the
infimum in \equ{eq: defn kappan} is attained, that is there is 
$x \in \Xn$ such that $\kappa_n = \kappa(x)$; moreover, for any $x_0
\in \overline{Ax}, \kappa(x_0) = \kappa(x)=\kappa_n$. Using the case $k=1$ of Corollary
\ref{cor: Euclidean}, we let $x_0$ be a stable lattice in
$\overline{Ax}$ such that $\alpha_1(x_0)
\geq 1$.  That is, $x_0$ contains no vectors in the open unit
Euclidean ball, so the open cube 
$C \df \left( -\frac{1}{\sqrt{n}},   \frac{1}{\sqrt{n}}\right)^n$ is
admissible. Moreover, the only possible vectors in $x_0$ on $\partial
\, C$ are on the corners of $C$, so there is $\vre>0$ such that the box
$C' \df \left( -\frac{1}{\sqrt{n}},   \frac{1}{\sqrt{n}}\right)^{n-1} \times
\left(-\left(\frac{1}{\sqrt{n}} + \vre \right) ,
  \frac{1}{\sqrt{n}}+ \vre\right)$ is also admissible. Taking closed
boxes $\cB \subset C'$ with volume arbitrarily close to that of $C'$,
we see that 
$$
\kappa_n = \kappa(x_0) \geq \frac{\Vol(C')}{2^n} > n^{-n/2}.
$$ 
\end{proof}

\ignore{
We take this opportunity to mention another connection between the
Mordell constant and the dynamics of the $A$-action on
$\Xn$. 

\begin{proposition}\Name{prop: conjecture minimizers}
There is $x \in \Xn$ with a bounded n $\kappa_n$ is attained on a compact $A$-orbit. 
\end{proposition} 
\begin{proof}

\end{proof} 

The following is a well-known conjecture:
$$
\text{(CSDM)} \ \ \text{any bounded } A \text{-orbit on } \Xn \text{ is compact.} 
$$
This first appeared in the paper \cite{} of 
 of Cassels and
Swinnerton-Dyer, and was recast in dynamical terms by Margulis in
\cite{}. Moreover compact
$A$-orbits correspond to algebraic lattices obtained from orders in
totally real number fields, see \cite{LW}.
}
Our next goal is Corollary \ref{cor: 1 mod 4} which gives an explicit
lower bound on $\kappa_n$, which 
improves \equ{eq: our bound} for $n$ congruent to 1 mod 4. To obtain our bound 
we treat separately lattices with bounded or unbounded
$A$-orbits. If $Ax$ is unbounded we bound $\kappa(x)$ by using an inductive
procedure and the work of Birch and Swinnerton-Dyer, as in \S
\ref{sec: reduction to compact orbits}.  In the bounded case we use arguments of
McMullen and known 
upper bounds for Hadamard's determinant problem. Our method
applies with minor modifications whenever $n$ is not divisible by
4. 
We begin with an analogue of Lemma \ref{lem: block stable is stable}. 

\begin{lemma}\Name{lem: bound on kappa in blocks}
Suppose $x = [g] \in \Xn$ with $g$ in upper triangular block form as
in \equ{block form}. Then $\kappa(x) \geq \prod_1^k \kappa
([g_i])$. In particular $\kappa(x) \geq \prod_1^k n_i^{-n_i/2}.$ 
\end{lemma}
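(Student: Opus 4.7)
The plan is to exhibit, for any choice of admissible symmetric boxes $\mathcal{B}_i$ for the diagonal block lattices $[g_i] \in \mathcal{L}_{n_i}$, a concretely admissible symmetric box for $x$ in $\R^n$ whose volume is the product $\prod \mathrm{Vol}(\mathcal{B}_i)$. Taking suprema then yields the first claim, and the second follows from Corollary~\ref{cor: Ramharter conj} applied to each block.

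Concretely, write $\R^n = \R^{n_1} \oplus \cdots \oplus \R^{n_k}$ by grouping coordinates according to the block sizes, and let $V_j \df \R^{n_1} \oplus \cdots \oplus \R^{n_j}$. Let $\pi_j : \R^n \to \R^{n_j}$ be the projection onto the $j$-th factor. The upper triangular block structure \equ{block form} gives, just as in the proof of Lemma~\ref{lem: block stable is stable}:
\begin{itemize}
\item $x \cap V_{k-1}$ is a lattice in $V_{k-1}$ with a basis arranged in upper triangular block form with diagonal blocks $g_1, \ldots, g_{k-1}$;
\item $\pi_k(x) = [g_k]$, since the last $n_k$ rows of $g$ vanish on the first $n-n_k$ columns and agree with $g_k$ on the last $n_k$ columns.
\end{itemize}

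Now let $\mathcal{B}_i \subset \R^{n_i}$ be admissible symmetric boxes for $[g_i]$, and set
$$\mathcal{B} \df \mathcal{B}_1 \times \cdots \times \mathcal{B}_k \subset \R^n,$$
which is again a symmetric box with $\mathrm{Vol}(\mathcal{B}) = \prod_i \mathrm{Vol}(\mathcal{B}_i)$. I verify admissibility by induction on $k$. Suppose $v \in x$ lies in the interior of $\mathcal{B}$; writing $v = (v_1, \ldots, v_k)$ with $v_j \in \R^{n_j}$, the product structure of $\mathcal{B}$ forces $v_j \in \mathrm{int}(\mathcal{B}_j)$ for each $j$. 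Applying $\pi_k$, we have $v_k \in \pi_k(x) = [g_k]$ with $v_k \in \mathrm{int}(\mathcal{B}_k)$; admissibility of $\mathcal{B}_k$ for $[g_k]$ forces $v_k = 0$. Then $v \in x \cap V_{k-1}$, which is a block triangular lattice with blocks $g_1, \ldots, g_{k-1}$, and $(v_1, \ldots, v_{k-1})$ lies in the interior of the product box $\mathcal{B}_1 \times \cdots \times \mathcal{B}_{k-1}$. The inductive hypothesis yields $v = 0$, so $\mathcal{B}$ is admissible for $x$.

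It follows that
$$\kappa(x) \geq \frac{\mathrm{Vol}(\mathcal{B})}{2^n} = \prod_{i=1}^k \frac{\mathrm{Vol}(\mathcal{B}_i)}{2^{n_i}},$$
and taking the supremum over admissible $\mathcal{B}_i$ independently gives $\kappa(x) \geq \prod_i \kappa([g_i])$. The second statement then follows by combining this with the bound $\kappa([g_i]) \geq \kappa_{n_i} \geq n_i^{-n_i/2}$ from Corollary~\ref{cor: Ramharter conj}. There is no serious obstacle here; the only point requiring care is the verification that $\pi_k(v) \in [g_k]$ and that $x \cap V_{k-1}$ inherits the block triangular form, both of which are immediate from \equ{block form}.
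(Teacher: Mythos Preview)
Your proof is correct and follows essentially the same approach as the paper: form the product of admissible boxes for the diagonal block lattices, and verify admissibility by projecting onto the last block to kill the last component, then recurse. The paper reduces explicitly to the case $k=2$ and uses an $\varepsilon$-argument in place of your ``take suprema independently'' (both are fine, since the inequality $\kappa(x)\ge\prod_i \tfrac{\Vol(\mathcal B_i)}{2^{n_i}}$ holds for all choices and the right side factors); your direct induction peeling off the last block is a cosmetic variant of the same idea, and your derivation of the ``in particular'' clause from Corollary~\ref{cor: Ramharter conj} is exactly what is intended.
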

\begin{proof}
By induction, it suffices to prove the Lemma in case $k=2$. In this
case there is a direct sum decomposition $\R^n = V_1 \oplus V_2$ where
the $V_i$ are spanned by standard basis vectors, and if we write $\pi:
\R^n \to V_2$ for the corresponding projection, then $[g_1] = x \cap
V_1, [g_2] = \pi(x)$. Write $\kappa^{(i)} \df \kappa([g_i])$. Then for
$\vre>0$, 
there are symmetric boxes $\mathcal{B}_i \subset V_i$ 
such that $\mathcal{B}_i$ is admissible for $[g_i]$ and 
$$\Vol(\mathcal{B}_i) \geq \frac{\kappa^{(i)} - \vre}{2^{n_i}}.$$
We claim that $\mathcal{B} \df \mathcal{B}_1 \times \mathcal{B}_2$ is
admissible for $x$. To see this, suppose $u \in x \cap
\mathcal{B}$. Since $\pi(u) \in \mathcal{B}_2$ and $\mathcal{B}_2$ 
is admissible for $\pi(x) = [g_2]$ we must have
$\pi(u) =0$, i.e. $u \in x \cap V_1 = [g_1]$; since
$\mathcal{B}_1$ is admissible for $[g_1]$ we must have $u=0$. 

This implies
$$\kappa(x) \geq 2^n \Vol(\mathcal{B})  = 2^{n_1} \Vol(\mathcal{B}_1)
\cdot 2^{n_2} 
\Vol(\mathcal{B}_2) \geq (\kappa^{(1)} -
  \vre)(\kappa^{(2)}-\vre),$$ 
and the result follows taking $\vre \to 0$. 
\end{proof}

\begin{corollary}\Name{cor: kappa unbounded orbits}
If $x \in \Xn$ is such that $Ax$ is unbounded then 
\eq{eq: kappa star}{
\kappa(x) \geq 
(n-1)^{-(n-1)/2}.
}
\end{corollary}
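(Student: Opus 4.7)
The plan is to use Birch--Swinnerton-Dyer to degenerate $x$ to a block-form lattice in $\overline{Ax}$, bound its Mordell constant using Lemma~\ref{lem: bound on kappa in blocks}, and then optimize over admissible block partitions.

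More precisely, since $Ax$ is unbounded, Lemma~\ref{BSD} produces $[g] \in \overline{Ax}$ (after a coordinate permutation, which preserves $\kappa$ since symmetric boxes are symmetric under coordinate permutations) such that $g$ is in upper triangular block form \equ{block form} with diagonal blocks of sizes $n_1, \ldots, n_k$, where $k \geq 2$ because the orbit is unbounded and BSD produces a nontrivial decomposition. Since $\kappa$ is $A$-invariant (the $A$-action permutes admissible symmetric boxes volume-preservingly) and lower semi-continuous, as noted in the proof of Proposition~\ref{prop: not optimal}, choosing $a_j \in A$ with $a_j x \to [g]$ yields
\[
\kappa([g]) \leq \liminf_j \kappa(a_j x) = \kappa(x).
\]
Combined with Lemma~\ref{lem: bound on kappa in blocks}, this gives $\kappa(x) \geq \prod_{i=1}^k n_i^{-n_i/2}$.

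It remains to establish the combinatorial inequality
\eq{eq: combinatorial}{
\prod_{i=1}^k n_i^{n_i} \leq (n-1)^{n-1}
}
for any partition $n = n_1 + \cdots + n_k$ with $k \geq 2$ and $n_i \geq 1$, since taking square roots and reciprocals turns \equ{eq: combinatorial} into $\prod n_i^{-n_i/2} \geq (n-1)^{-(n-1)/2}$. Taking logarithms, \equ{eq: combinatorial} becomes $\sum n_i \log n_i \leq (n-1)\log(n-1)$, and the function $m \mapsto m \log m$ is convex on $[1,\infty)$ (with value $0$ at $m=1$). On the polytope $\{(n_1, \ldots, n_k) \in \R^k : \sum n_i = n, \ n_i \geq 1\}$, the extreme points are the permutations of $(1, 1, \ldots, 1, n-k+1)$, so by convexity the maximum of $\sum n_i \log n_i$ over integer partitions is $(n-k+1)\log(n-k+1)$. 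Since $k \geq 2$ and $m \log m$ is increasing on $[1,\infty)$, this is at most $(n-1)\log(n-1)$, with equality for the partition $(1, n-1)$.

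The only potentially delicate point is ensuring that the lower semi-continuity step is applied correctly despite the coordinate permutation appearing in Lemma~\ref{BSD}; this is harmless because symmetric boxes are invariant under permutations of the coordinate axes, so $\kappa$ descends through the permutation. Otherwise the argument is purely formal and the combinatorial step is a textbook convexity exercise.
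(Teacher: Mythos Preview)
Your proof is correct and follows the same route as the paper: degenerate via Birch--Swinnerton-Dyer to a block-form lattice $[g]\in\overline{Ax}$ with $k\ge 2$ blocks, use lower semicontinuity and $A$-invariance of $\kappa$ together with Lemma~\ref{lem: bound on kappa in blocks}, and then establish the combinatorial inequality $\prod n_i^{n_i}\le (n-1)^{n-1}$. The only difference is cosmetic: the paper verifies the $k=2$ case $j^{j/2}(n-j)^{(n-j)/2}\le (n-1)^{(n-1)/2}$ directly and then inducts on $k$, whereas you handle all $k$ at once by observing that $\sum n_i\log n_i$ is convex on the simplex $\{n_i\ge 1,\ \sum n_i=n\}$ and hence maximized at an extreme point $(1,\dots,1,n-k+1)$.
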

\begin{proof}
If $Ax$ is unbounded then by \cite{BirchSD}, up to a permutation of
the axes, there is $x' \in
\overline{Ax}$ so that $x' = [g]$ is in upper triangular form, with $k
\geq 2$ blocks. Let the
corresponding parameters as in \equ{block form} be $n= n_1+ \cdots
+n_k$. Since $\kappa(x)
\geq \kappa(x')$, by Lemma \ref{lem: bound on kappa in blocks} it
suffices to prove 
that 
\eq{eq: suffices to prove that}{
\prod_{i=1}^k \frac{1}{n_i^{n_i/2}} \geq \frac{1}{(n-1)^{\frac{n-1}{2}}}.
}
It is easy to check that for $j=1, \ldots, n-1$, 
$$
j^{\frac{j}{2}}(n-j)^{\frac{n-j}{2}} \leq (n-1)^{\frac{n-1}{2}},
$$ and the case $k=2$ of \equ{eq: suffices to prove that} follows. 
By induction on $k$ one then shows that 
$
\prod_{i=1}^k n_i^{-n_i/2} \geq (n-k+1)^{-\frac{n-k+1}{2}}
$
and this implies \equ{eq: suffices to prove that} for all $k\geq 2$. 
\end{proof}
To treat the bounded orbits we will use known bounds on the Hadamard
determinant problem, which we now
recall. Let 
\eq{eq: defn hn}{
h_n \df \sup \left \{ |\det (a_{ij})|: \forall i,j \in \{1, \ldots, n\},
|a_{ij}| \leq 1 \right \}.
}
Hadamard showed that $h_n \leq n^{n/2}$ and proved that this bound is
not optimal unless $n$ is equal to 1,2 or is a multiple of 4. Explicit 
upper bounds for  such $ n $ have been obtained
by Barba, Ehlich and Wojtas (see \cite{brenner, wiki_hadamard}). 

\begin{proposition}\Name{prop: improving using Hadamard}
If $x \in \Xn$ has a bounded $A$-orbit then $\kappa(x) \geq
\frac{1}{h_n}$. 
\end{proposition}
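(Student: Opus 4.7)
The plan is to combine Theorem~\ref{thm: main} with a direct construction applying Hadamard's bound to the blocking vectors of a maximum-volume admissible box.

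By Theorem~\ref{thm: main}, since $Ax$ is bounded, $\overline{Ax}$ contains a stable lattice $x_0$. The function $\kappa$ is $A$-invariant, because the diagonal group sends symmetric boxes to symmetric boxes of the same volume, and the proof of Proposition~\ref{prop: not optimal} established the lower semicontinuity $\kappa(x_0) \leq \liminf_j \kappa(x_j)$. Applied to a sequence $a_j x \to x_0$ with $a_j \in A$, this yields $\kappa(x) \geq \kappa(x_0)$. It therefore suffices to prove $\kappa(x_0) \geq 1/h_n$ for every stable lattice $x_0 \in \Xn$.

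Fix such an $x_0$ and let $(a_1^*,\dots,a_n^*)$ maximize $\prod_i a_i$ over those positive $n$-tuples for which $\BB^* \df \prod_i [-a_i,a_i]$ is admissible for $x_0$; the maximum exists by a compactness argument using the Minkowski bound $\prod_i a_i \leq 1$ and the fact that stability keeps all coordinates bounded away from $0$ and $\infty$ at any maximizer. At this maximum, for each $i$ one cannot increase $a_i^*$ without destroying admissibility, so there is a \emph{blocking vector} $v^{(i)} \in x_0 \sm \{0\}$ with $|v^{(i)}_i| = a_i^*$ and $|v^{(i)}_j| \leq a_j^*$ for all $j$.

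Assuming the $v^{(i)}$ can be taken linearly independent, form the matrix $M \df (v^{(1)} \mid \cdots \mid v^{(n)}) \in \Mat_n(\R)$ and $D \df \diag{a_1^*,\dots,a_n^*}$. Every entry of $D^{-1}M$ has absolute value at most $1$, so $|\det(D^{-1}M)| \leq h_n$ by the definition of $h_n$ in \eqref{eq: defn hn}, and hence $|\det M| \leq h_n \prod_i a_i^*$. On the other hand, stability of $x_0$ applied to the full-rank sublattice $\Z v^{(1)} + \cdots + \Z v^{(n)} \subset x_0$ gives $|\det M| \geq 1$. Combining, $\kappa(x_0) \geq \Vol(\BB^*)/2^n = \prod_i a_i^* \geq 1/h_n$.

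The main obstacle is the linear independence claim, because a single vector $v \in x_0$ can simultaneously serve as a blocking vector for several coordinate directions (when several of its coordinates saturate the bound $|v_j| = a_j^*$), so a priori the natural $v^{(i)}$ could all lie in a common proper subspace $V \subsetneq \R^n$. I would resolve this via a perturbation argument: supposing $L \df (\overline{\BB^*} \cap x_0) \sm \{0\} \subset V$ for some $V \subsetneq \R^n$, one seeks a direction $\xi \in \R^n$ with $\sum_j \xi_j/a_j^* > 0$ such that every $v \in L$ has a saturated coordinate $j$ (with $|v_j| = a_j^*$) satisfying $\xi_j \leq 0$; then $\prod_j \bigl[-(a_j^* + t\xi_j),\,a_j^* + t\xi_j\bigr]$ is admissible for small $t > 0$ and has strictly larger volume, contradicting maximality of $a^*$. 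Producing such a $\xi$ is a combinatorial condition on the saturation sets $\{j : |v_j| = a_j^*\}$ of the $v \in L$ and is the most delicate step of the argument; it may alternatively be addressed by induction on $n$ using the rank-$(\dim V)$ sublattice $x_0 \cap V$, its orthogonal projection, and Lemma~\ref{lem: bound on kappa in blocks}.
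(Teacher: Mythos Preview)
Your route diverges from the paper's, and the divergence matters. The paper does \emph{not} pass through Theorem~\ref{thm: main} or stable lattices at all: it invokes McMullen's well-rounded theorem, observing that McMullen's entire argument goes through verbatim for the $L^p$ norm (any $1<p<\infty$). Taking $p$ large makes $L^p$ and $L^\infty$ $(1+\vre)$-biLipschitz, so one obtains $a\in A$ and linearly independent $v_1,\dots,v_n\in ax$ with $\|v_i\|_\infty\le (1+\vre)^2 r$, where $r$ is the $L^p$-shortest length; the cube of half-side $r/(1+\vre)$ is admissible, and Hadamard's bound on $\det(v_1|\cdots|v_n)$ finishes the job after $\vre\to 0$. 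The linear independence of the $v_i$ comes for free from the definition of well-rounded.

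Your argument replaces this with a maximal-box construction and then needs the blocking vectors to be linearly independent. You are right that this is the crux, but the perturbation sketch does not close the gap. Reducing to cubes via a diagonal change, your claim becomes: in a compact $A$-orbit closure, the lattice maximizing the $L^\infty$-shortest vector is $L^\infty$-well-rounded. This is exactly the $p=\infty$ analogue of McMullen's theorem, and it is genuinely delicate because the $L^\infty$ ball is not strictly convex; a single vector can saturate several faces simultaneously, and your combinatorial condition (a proper hitting set for the saturation sets $S_v$) does not follow from the hypothesis ``all $v$ lie in a proper subspace $V$'' by any argument you have indicated. The alternative inductive route via Lemma~\ref{lem: bound on kappa in blocks} does not apply either, since that lemma requires a coordinate-aligned block decomposition, whereas your $V$ is an arbitrary subspace. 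In short, you have reduced to precisely the statement the paper's $L^p$ approximation is designed to sidestep.

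Two smaller remarks. First, the appeal to stability for $|\det M|\ge 1$ is unnecessary: any $n$ linearly independent vectors in a unimodular lattice generate a sublattice of covolume $\ge 1$. Second, the existence of the maximal admissible box for a given stable $x_0$ is not automatic (if $x_0$ has no vectors on a coordinate axis, boxes can be arbitrarily elongated); one must pass to a limit in $\overline{Ax_0}\subset\overline{Ax}$, at which point stability may be lost---harmless in view of the previous remark, but it shows the stable-lattice detour is not doing real work here.
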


\begin{proof}[Sketch of proof]
Let $\vre>0$. There is $p <\infty$ such that the $L^p$ norm and the
$L^\infty$ norm on $\R^n$ are $1+\vre$-biLipschitz; i.e. for any $v
\in \R^n$, 
\eq{eq: bilipschitz}{
\frac{\|v\|_p}{1+\vre} \leq \|v\|_{\infty} \leq (1+\vre) \|v\|_p.
}
In \cite{McMullenMinkowski}, McMullen showed that the closure of any
bounded $A$-orbit contains a well-rounded lattice, i.e. a lattice
whose shortest nonzero vectors span $\R^n$. In McMullen's paper, the
length of the shortest vectors was measured using the Euclidean
norm, but {\em McMullen's arguments apply equally well to the shortest
  vectors with respect to the $L^p$ norm}. Thus there is $a \in A$ and
vectors $v_1, \ldots, v_n \in ax$ spanning $\R^n$,  such that  for $i=1, \ldots, n$, 
$$ \|v_i\| \in [r, (1+\vre)r]. 
$$
Here $r$ is the length, with respect to
the $L^p$-norm, of the shortest nonzero vector of $ax$. Using the two
sides of \equ{eq: bilipschitz} we find that $ax$
contains an admissible symmetric box of sidelength $r/(1+\vre)$, and
the $L^\infty$ norm of the $v_i$ is at most $(1+\vre)^2 r$. Let $A$ be
the matrix whose columns are the $v_i$. Since the $v_i$ span $\R^n$,
$\det A \neq 0$, and since $x$ is unimodular, $|\det A| \geq
1$. Recalling \equ{eq: defn hn} we find that
$$1 \leq |\det A| \leq \left((1+\vre)^2)r\right)^{n} h_n,
$$
and by definition of $\kappa$ we find
$$
\kappa(x) = \kappa(ax) \geq \left(\frac{r}{1+\vre} \right)^n.
$$
Putting these together and letting $\vre \to 0$ we see that 
$\kappa(x) \geq \frac{1}{h_n}$, as claimed. 
\end{proof}
\begin{corollary}\Name{cor: 1 mod 4}
If $n \geq 5$ is congruent to 1 mod 4, then 
\eq{eq: better bound}{
\kappa_n \geq \frac{1}{\sqrt{2n-1}(n-1)^{(n-1)/2}}.
}
\end{corollary}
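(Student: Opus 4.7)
The plan is to combine the two previously established estimates and insert a classical bound on the Hadamard determinant constant. Given $x \in \Xn$, we proceed by a dichotomy on whether the orbit $Ax$ is bounded or not (which is the same dichotomy used in Lemma \ref{BSD} and in the proof of Proposition \ref{copt red prop}).

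If $Ax$ is unbounded, Corollary \ref{cor: kappa unbounded orbits} gives the estimate $\kappa(x) \geq (n-1)^{-(n-1)/2}$. Since $\sqrt{2n-1} \geq 1$, this is already stronger than the right-hand side of \equ{eq: better bound}, so there is nothing more to prove in this case. Notice that this step does not use the residue of $n$ modulo $4$ at all.

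If $Ax$ is bounded, Proposition \ref{prop: improving using Hadamard} gives $\kappa(x) \geq 1/h_n$. At this point we invoke Barba's sharpening of Hadamard's bound: for every odd $n$, and in particular for $n \equiv 1 \pmod{4}$, one has the inequality $h_n \leq \sqrt{2n-1}\,(n-1)^{(n-1)/2}$ (this is one of the explicit upper bounds for the Hadamard determinant problem cited in the discussion preceding \equ{eq: defn hn}; see \cite{brenner, wiki_hadamard}). Substituting yields exactly the bound $\kappa(x) \geq 1/\bigl(\sqrt{2n-1}\,(n-1)^{(n-1)/2}\bigr)$.

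Combining the two cases and taking the infimum over $x \in \Xn$ produces \equ{eq: better bound}. There is no real obstacle here: the work has all been done, and the corollary is simply the synthesis of the bounded-orbit estimate (via Hadamard/Barba) and the unbounded-orbit estimate (via the Birch--Swinnerton-Dyer reduction of Corollary \ref{cor: kappa unbounded orbits}). The only minor point to double-check is the elementary numerical comparison in the unbounded case, which is trivial; the hypothesis $n \equiv 1 \pmod{4}$ enters solely to justify applying Barba's bound rather than Hadamard's original $n^{n/2}$ bound. An entirely analogous statement holds for $n \equiv 3 \pmod{4}$, and an analogous (slightly different) bound holds for $n \equiv 2 \pmod{4}$ via Ehlich--Wojtas, which is presumably the content of the remark that the method ``applies with minor modifications whenever $n$ is not divisible by $4$.''
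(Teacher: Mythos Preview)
Your proof is correct and follows essentially the same approach as the paper: split into the bounded and unbounded orbit cases, apply Corollary~\ref{cor: kappa unbounded orbits} and Proposition~\ref{prop: improving using Hadamard} respectively, and in the bounded case insert Barba's bound $h_n \leq \sqrt{2n-1}(n-1)^{(n-1)/2}$. Your closing remark about the cases $n\equiv 2,3 \pmod 4$ also matches the paper's comment following the corollary.
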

\begin{proof}
The right hand side of
\equ{eq: better bound} is clearly smaller than the right hand side of
\equ{eq: kappa star}. Now the
claim follows from Corollary \ref{cor: kappa unbounded orbits} and
Proposition \ref{prop: improving using Hadamard}, using Barba's bound
\eq{eq: Barba}{
h_n \leq \sqrt{2n-1}(n-1)^{(n-1)/2}.
} 
\end{proof}
The same argument applies in the other cases in which $n$ is
sufficiently large and is not divisible by 4, since in these cases
there are explicit upper bounds for the numbers $h_n$ which could be
used in place \equ{eq: Barba}. 

\section{Two strategies for Minkowski's conjecture}\Name{sec: MC} 
We begin by recalling the well-known Davenport-Remak strategy
for proving Minkowski's conjecture. The function 
$N(u) = \prod_1^n u_i$ is clearly $A$-invariant, and 
it follows that the quantity 
$$\widetilde{ N}(x) \df  \sup_{u \in \R^n} \inf_{v
  \in x} |N(u-v)|$$ 
appearing in \equ{eq: Minkowski conj} is
$A$-invariant. Moreover, it is easy to show that if $x_n \to x$ in
$\Xn$ then $\widetilde{ N}(x) \geq \limsup_n \widetilde{ N}(x_n)$. Therefore, in order
to show the estimate \equ{eq: Minkowski conj} for $x' \in \Xn$, it is
enough to show it for some $x \in \overline{Ax'}$. Suppose that $x$
satisfies \equ{eq: covrad} with $d=n$; that is for every $u \in \R^n$ there is $v
\in x$ such that $\|u-v\| \leq \frac{\sqrt{n}}{2}$. 
Then applying  
the inequality of arithmetic and geometric means one finds
$$\prod_1^n \left(|u_i-v_i|^2 \right)^{\frac1n} \leq \frac{1}{n} \sum_1^n |u_i-v_i|^2 \leq \frac{1}{4}
$$
which implies $|N(u-v)| \leq \frac{1}{2^n}$. 
The upshot is that in order to prove Minkowski's conjecture, it is
enough to prove that for every $x' \in \Xn$ there is $x \in
\overline{Ax}$ satisfying \equ{eq: covrad}. So in light of Theorem
\ref{thm: main} we obtain:
\begin{corollary}\Name{cor: for Minkowski 1}
If all stable lattices in $\Xn$ satisfy \equ{eq: covrad}, then
Minkowski's conjecture is true in dimension $n$. 
\end{corollary}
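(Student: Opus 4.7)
The plan is to combine Theorem~\ref{thm: main} with the Davenport--Remak reduction reviewed in the paragraph preceding the corollary. Given an arbitrary $x' \in \Xn$, I will apply Theorem~\ref{thm: main} to produce a stable lattice $x \in \overline{Ax'}$. By the standing hypothesis, $x$ satisfies \equ{eq: covrad} with $d=n$, so I only need to verify \equ{eq: Minkowski conj} for $x$ and then transfer it to $x'$.

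To transfer, I will use the two invariance/semicontinuity facts already noted: the $A$-invariance of $\widetilde{N}$ (which follows from $N(au) = N(u)$ for $a \in A$), and the upper semicontinuity $\widetilde{N}(x) \geq \limsup_n \widetilde{N}(x_n)$ whenever $x_n \to x$ in $\Xn$. Writing $x = \lim_j a_j x'$ with $a_j \in A$, these together give
\[
\widetilde{N}(x') = \widetilde{N}(a_j x') \geq \limsup_j \widetilde{N}(a_j x') \;\;\text{compared with}\;\; \widetilde{N}(x) \geq \limsup_j \widetilde{N}(a_j x'),
\]
which is enough to conclude $\widetilde{N}(x') \leq \widetilde{N}(x)$, so that the bound \equ{eq: Minkowski conj} for $x$ implies the same bound for $x'$.

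It remains to verify \equ{eq: Minkowski conj} for the stable lattice $x$. Given any $u \in \R^n$, choose $v \in x$ with $\|u-v\| \leq \sqrt{n}/2$, which is possible by \equ{eq: covrad}. Applying the inequality of arithmetic and geometric means to the nonnegative numbers $|u_i - v_i|^2$ yields
\[
|N(u-v)|^{2/n} \;=\; \prod_{i=1}^n \left(|u_i - v_i|^2\right)^{1/n} \;\leq\; \frac{1}{n}\sum_{i=1}^n |u_i - v_i|^2 \;=\; \frac{\|u-v\|^2}{n} \;\leq\; \frac{1}{4},
\]
so $|N(u-v)| \leq 2^{-n}$. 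Taking the supremum over $u$ and infimum over $v \in x$ gives $\widetilde{N}(x) \leq 2^{-n}$, completing the proof.

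I do not anticipate any real obstacle: all the substantive content is packaged in Theorem~\ref{thm: main} and in the Davenport--Remak argument recalled just above the corollary statement. The proof is essentially a two-line assembly once those ingredients are in hand.
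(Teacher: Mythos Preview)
Your proof is correct and follows exactly the paper's approach: the Davenport--Remak reduction (AM--GM plus $A$-invariance and upper semicontinuity of $\widetilde N$) combined with Theorem~\ref{thm: main} to land on a stable lattice in $\overline{Ax'}$. One cosmetic remark: in your displayed transfer step the sequence $\widetilde N(a_j x')$ is constant, so the first ``$\geq$'' is in fact an equality, which is what makes the conclusion $\widetilde N(x') \leq \widetilde N(x)$ go through.
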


In the next two subsections, we outline two strategies for
establishing that all stable lattices satisfy
\equ{eq: covrad}. 
Both strategies yield affirmative answers in dimensions $n
\leq 7$, thus providing new proofs of Minkowski's conjecture in these
dimensions. 

\subsection{Using Korkine-Zolotarev reduction}
Korkine-Zolotarev reduction is a classical method for
choosing a basis $v_1, \ldots, v_n$ of a lattice $x \in \Xn$. Namely
one takes for $v_1$ a shortest nonzero vector of $x$ and
denotes its length by $A_1$. Then, proceeding inductively, for $v_i$ one takes 
a vector whose projection onto $(\spa(v_1, \ldots, v_{i-1}))^\perp$ is
shortest (among those with nonzero projection), and denotes the length
of this
projection by $A_i$. In case there is more
than one shortest vector the process 
is not uniquely defined. Nevertheless we call $A_1, \ldots, A_n$ the
{\em diagonal KZ coefficients of $x$} (with the understanding that
these may be multiply defined for some measure zero subset of $\Xn$). Since $x$
is unimodular we always have 
\eq{eq: det one}{\prod A_i 
=1.}
Korkine and Zolotarev proved the bounds 
\eq{eq: KZ bounds}{
A_{i+1}^2 \geq \frac34 A_i^2, \ \ A_{i+2}^2 \geq \frac23 A_i^2.
}

A method introduced by Woods \cite{Woods_n=4} and developed further in
\cite{hans-gill1} leads to an upper bound on
$\covrad(x)$ in terms of the diagonal KZ coefficients. 
The method relies on the following estimate. Below
$\gamma_n \df \sup_{x \in \Xn} \alpha_1(x)$, where $\alpha_1$
is defined via \equ{eq: k quantities}, that is, $\gamma^2_n$ is the
so-called Hermite constant. 

\begin{lemma}[Woods]\Name{lemma of Woods}
Suppose that $x$ is a lattice in $\R^n$ of covolume $d$, and suppose
that $2 A_1^n \geq
d \gamma_{n+1}^{n+1}$. Then 
$$
\covrad^2(x) \leq A_1^2 -\frac{A_1^{2n+2}}{d^2 \gamma_{n+1}^{2n+2}}.
$$
\end{lemma}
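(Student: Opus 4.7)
My plan (following Woods and the later refinements in \cite{hans-gill1}) is a one-dimension-higher extension trick. Given an arbitrary $u \in \R^n$, I would let $v_0 \in x$ be a closest lattice point to $u$ and set $\tilde u \df u - v_0$, so that $\|\tilde u\| = \dist(u,x)$ and $\|\tilde u - w\| \geq \|\tilde u\|$ for every $w \in x$. For a parameter $h > 0$ to be chosen, I would form the lattice $L_h \subset \R^{n+1}$ generated by $x \times \{0\}$ together with $(\tilde u, h)$; this has covolume $dh$, and its general element has the form $(w + k\tilde u, kh)$ with $w \in x$ and $k \in \Z$. By the definition of the Hermite constant $\gamma_{n+1}$, $L_h$ contains a nonzero vector
\[
v^* = (w + k\tilde u, kh), \qquad \|v^*\|^2 = \|w + k\tilde u\|^2 + k^2 h^2 \leq \gamma_{n+1}^2(dh)^{2/(n+1)}.
\]

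The natural choice is $h := h_* \df A_1^{n+1}/(d\gamma_{n+1}^{n+1})$, which makes the Hermite bound equal to $A_1^2$ and gives $h_*^2 = A_1^{2n+2}/(d^2\gamma_{n+1}^{2n+2})$, exactly the term subtracted in the conclusion. Under this choice the hypothesis $2A_1^n \geq d\gamma_{n+1}^{n+1}$ rewrites as the compact inequality $h_* \geq A_1/2$. The heart of the argument will be to show that one can arrange $|k|=1$ in the short vector $v^*$. If $k=0$ then $v^*=(w,0)$ with $w \in x\setminus\{0\}$, forcing $\|v^*\|\geq A_1$; I would rule this out by first working at some $h < h_*$ so that the Hermite bound is strictly below $A_1$. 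If $|k|\geq 2$ then $\|v^*\|^2 \geq k^2h^2 \geq 4h^2$, which exceeds $A_1^2$ once $h > A_1/2$, contradicting $\|v^*\|^2 \leq A_1^2$. This is precisely where the hypothesis of the lemma enters.

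With $|k|=1$ in hand, WLOG $k = -1$ (otherwise replace $v^*$ by $-v^*$), so that $\|w-\tilde u\|^2 \leq A_1^2 - h^2$. Since $w + v_0 \in x$, the closest-point property of $v_0$ yields
\[
\|w-\tilde u\| = \|u - (w+v_0)\| \geq \|u-v_0\| = \|\tilde u\|,
\]
hence $\|\tilde u\|^2 \leq A_1^2 - h^2$. Letting $h \nearrow h_*$ and then taking the supremum over $u \in \R^n$ gives $\covrad(x)^2 \leq A_1^2 - h_*^2 = A_1^2 - A_1^{2n+2}/(d^2\gamma_{n+1}^{2n+2})$, as required.

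The main delicacy I anticipate is the boundary case $h_* = A_1/2$ (equality in the hypothesis), where both $k=0$ and $|k|=2$ become borderline-possible at $h = h_*$ itself; the limiting argument --- running the case analysis at $h$ slightly smaller than $h_*$ where both exclusions are strict, and then passing to the limit --- handles this uniformly. Apart from this minor point, the whole proof reduces to a single clean application of the Hermite bound to $L_h$ combined with the closest-vector property of $v_0$.
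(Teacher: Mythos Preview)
The paper does not give a proof of this lemma; it is stated with attribution to Woods \cite{Woods_n=4}. Your outline is exactly Woods' one-dimension-up argument, and in the strict case $2A_1^n > d\gamma_{n+1}^{n+1}$ (equivalently $h_* > A_1/2$) it goes through cleanly: pick any $h \in (A_1/2,\, h_*)$, so that the Hermite bound is strictly below $A_1^2$ (excluding $k=0$) while $4h^2 > A_1^2$ (excluding $|k|\geq 2$), forcing $|k|=1$; then let $h \nearrow h_*$.

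Your handling of the boundary case $h_*=A_1/2$ is, however, incorrect. You assert that at $h$ slightly below $h_*$ ``both exclusions are strict,'' but the two exclusions pull in opposite directions: ruling out $k=0$ needs $h<h_*$, while your argument ruling out $|k|\geq 2$ needs $h>A_1/2$. When $h_*=A_1/2$ there is no such $h$. Concretely, for $h<h_*=A_1/2$ the Hermite bound $\gamma_{n+1}^2(dh)^{2/(n+1)}$ decreases like $h^{2/(n+1)}$, more slowly than $4h^2$, so in fact $4h^2$ lies \emph{below} the Hermite bound and a shortest vector with $|k|=2$ is not excluded. The limiting procedure you describe cannot close this gap. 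One way to finish: work at $h=h_*$ itself and suppose no $|k|=1$ vector has norm $\leq A_1$. Then every nonzero vector of $L_{h_*}$ has norm $\geq A_1$, so $L_{h_*}$ realises the Hermite constant $\gamma_{n+1}$ and is therefore perfect by Voronoi's theorem; but its shortest vectors are then confined to $(w,0)$ with $\|w\|=A_1$ together with at most $\pm(0,A_1)$, and the associated rank-one forms $vv^T$ cannot span the off-diagonal block of symmetric $(n{+}1)\times(n{+}1)$ matrices, contradicting perfection.
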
 
Woods also used the following observation:
\begin{lemma}\Name{first Woods lemma}
Let $x$ be a lattice in $\R^n$, let $\Lambda$ be a subgroup, and let
$\Lambda'$ denote the projection of $x$ onto $(\spa 
\Lambda)^{\perp}$. Then 
$$
\covrad^2(x) \leq \covrad^2(\Lambda) +\covrad^2(\Lambda')
$$
\end{lemma}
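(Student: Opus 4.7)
The plan is as follows. Given an arbitrary target point $u \in \R^n$, I would aim to construct a lattice vector $v \in x$ such that the orthogonal decomposition of $u - v$ along $V \df \spa(\Lambda)$ and along $V^{\perp}$ has each component controlled by the corresponding covering radius. The Pythagorean theorem then supplies the squared bound, and taking the supremum over $u$ yields the result.

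The construction proceeds in two steps. Let $\pi \colon \R^n \to V^{\perp}$ denote the orthogonal projection, so that $\Lambda' = \pi(x)$. First, applying the definition of $\covrad(\Lambda')$ to the point $\pi(u) \in V^{\perp}$, I would find $w \in \Lambda'$ with $\|\pi(u) - w\| \leq \covrad(\Lambda')$. Since $\Lambda'$ is by definition the image $\pi(x)$, there is a lift $v_1 \in x$ with $\pi(v_1) = w$. The $V^{\perp}$-component of $u - v_1$ is then exactly $\pi(u) - w$, which has length at most $\covrad(\Lambda')$.

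Second, the $V$-component of $u - v_1$, call it $u'$, lies in $V = \spa(\Lambda)$. Applying the definition of $\covrad(\Lambda)$ inside $V$, I would find $v_2 \in \Lambda$ with $\|u' - v_2\| \leq \covrad(\Lambda)$. Setting $v \df v_1 + v_2$, one has $v \in x$ because $\Lambda \subset x$, and by construction $u - v$ decomposes as $(u' - v_2) + (\pi(u) - w)$ with the two terms in $V$ and $V^{\perp}$ respectively. The Pythagorean theorem then gives
\[
\|u - v\|^2 = \|u' - v_2\|^2 + \|\pi(u) - w\|^2 \leq \covrad^2(\Lambda) + \covrad^2(\Lambda'),
\]
as desired.

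I do not anticipate a serious obstacle; the argument is essentially a routine exercise in orthogonal decomposition. The one point worth flagging is that it is crucial that $\Lambda' = \pi(x)$ rather than $\pi(\Lambda)$ (which would be trivial): it is this which guarantees that the point $w$ chosen in the first step can be lifted to a genuine element $v_1$ of $x$, making $v = v_1 + v_2 \in x$ in the final step.
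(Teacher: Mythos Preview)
Your argument is correct and is the standard proof of this observation. Note that the paper itself does not supply a proof of this lemma: it is stated as an observation due to Woods and then used as a black box, so there is no ``paper's proof'' to compare against. Your write-up would serve perfectly well as the omitted justification.
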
 
As a consequence of Lemmas \ref{lemma of Woods} and \ref{first Woods
  lemma},  we obtain:
\begin{proposition}\Name{prop: combining Woods}
Suppose $A_1, \ldots, A_n$ are diagonal KZ coefficients of $x \in
\Xn$ and suppose $n_1, \dots, n_k$ are positive integers with $n = n_1 + \cdots
+ n_k$. 
Set
\eq{eq: defn mi di}{m_i \df n_1 + \cdots
+ n_i \ \text{and } d_i \df \prod_{j=m_{i-1}+1}^{m_{i}} A_j.}
If 
\eq{eq: assumption of woods}{
2A_{m_{i-1}+1} \geq d_i \gamma_{n_i+1}^{n_i+1} 
}
for each $i$, then 
\eq{eq: consequence}{
\covrad^2(x ) \leq \sum_{i=1}^k \left(A^2_{m_{i-1}+1} -
  \frac{A^{2n_i+2}_{m_{i-1}+1}}{d_i^2 \gamma_{n_i+1}^{n_i+1}} \right) 
}

\end{proposition}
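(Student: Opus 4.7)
The plan is to decompose $x$ into $k$ rank-$n_i$ pieces $\Lambda_1, \ldots, \Lambda_k$ adapted to the given partition, apply Lemma~\ref{first Woods lemma} iteratively to bound $\covrad^2(x)$ by the sum of $\covrad^2(\Lambda_i)$, and then apply Lemma~\ref{lemma of Woods} to each $\Lambda_i$ separately. The construction of the $\Lambda_i$ should be arranged so that $\Lambda_i$ has covolume $d_i$ and first KZ coefficient (equivalently, shortest vector length) equal to $A_{m_{i-1}+1}$, so that the outputs of Lemma~\ref{lemma of Woods} line up with the summands in \equ{eq: consequence}.

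Concretely, fix a Korkine-Zolotarev basis $v_1, \ldots, v_n$ of $x$ with Gram-Schmidt orthogonalization $u_1, \ldots, u_n$ (so $\|u_j\|=A_j$). Let $P_i$ denote the orthogonal projection onto $\spa(v_1,\ldots,v_{m_{i-1}})^\perp$, and define
$$
\Lambda_i \df \Z\text{-span}\bigl\{P_i(v_{m_{i-1}+1}),\, \ldots,\, P_i(v_{m_i})\bigr\}.
$$
Since $v_j - u_j \in \spa(u_1, \ldots, u_{j-1})$, the vectors $P_i(v_{m_{i-1}+j})$ differ from $u_{m_{i-1}+j}$ by a triangular combination of the preceding $u$'s with $1$'s on the diagonal; thus $\Lambda_i$ has rank $n_i$ and covolume $\prod_{j=m_{i-1}+1}^{m_i} A_j = d_i$ in its span $\spa(u_{m_{i-1}+1},\ldots,u_{m_i})$. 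For the shortest vector: $\Lambda_i \subset P_i(x)$, and by the definition of the KZ process, the shortest nonzero vector of $P_i(x)$ has length $A_{m_{i-1}+1}$; on the other hand $u_{m_{i-1}+1}=P_i(v_{m_{i-1}+1}) \in \Lambda_i$, so its shortest vector length is exactly $A_{m_{i-1}+1}$.

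With this setup, iteration is direct: apply Lemma~\ref{first Woods lemma} to $x$ with the subgroup $\Lambda_1$, noting that the resulting projected lattice is $P_2(x)$; then apply Lemma~\ref{first Woods lemma} to $P_2(x)$ with the subgroup $\Lambda_2 \subset P_2(x)$, whose orthogonal projection yields $P_3(x)$, and so on. This gives
$$
\covrad^2(x) \,\leq\, \sum_{i=1}^k \covrad^2(\Lambda_i).
$$
Hypothesis \equ{eq: assumption of woods} (read with the power $n_i$ on $A_{m_{i-1}+1}$, matching the rank-$n_i$ form of Lemma~\ref{lemma of Woods}) lets us apply Lemma~\ref{lemma of Woods} to each $\Lambda_i$, producing exactly the $i$-th summand of \equ{eq: consequence}. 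The main point requiring care is the identification of the shortest vector of $\Lambda_i$: one must notice that even though $\Lambda_i$ is only a sublattice of the full projected lattice $P_i(x)$, it still inherits the sharp lower bound $A_{m_{i-1}+1}$ because it contains the Gram-Schmidt vector $u_{m_{i-1}+1}$. The remainder of the argument is bookkeeping combining the two Woods lemmas.
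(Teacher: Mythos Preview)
Your proposal is correct and follows essentially the same route as the paper: both define $\Lambda_i$ as the orthogonal projection of $\spa_{\Z}(v_{m_{i-1}+1},\ldots,v_{m_i})$ onto $\spa(v_1,\ldots,v_{m_{i-1}})^\perp$, verify that $\Lambda_i$ has covolume $d_i$, apply Lemma~\ref{lemma of Woods} to each $\Lambda_i$, and combine via an iterated use of Lemma~\ref{first Woods lemma}. Your write-up is in fact slightly more explicit than the paper's in checking that the shortest vector of $\Lambda_i$ has length exactly $A_{m_{i-1}+1}$ (using that $\Lambda_i \subset P_i(x)$ and $u_{m_{i-1}+1}\in\Lambda_i$), a point the paper leaves implicit.
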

\begin{proof}
Let $v_1, \ldots, v_n$ be the basis of $x$
obtained by the Korkine Zolotarev reduction process. Let 
$\Lambda_1$ be the subgroup of $x$ generated by $v_1, \ldots,
v_{n_1}$, and for $i=2, \ldots, k$ let
$\Lambda_i$ be the projection onto
$(\bigoplus_1^{i-1} \Lambda_j)^{\perp}$ of the subgroup of $x$
generated by $v_{m_{i-1}+1}, \ldots, v_{m_{i}}$. This is a lattice of
dimension $m_i$, and arguing as in the proof of \equ{eq: claim 1} we
see that it has covolume $d_i$. The assumption
\equ{eq: assumption of woods} says that we may apply Lemma \ref{lemma
  of Woods}
to each $\Lambda_i$.  We obtain 
$$\covrad^2(\Lambda_i) \leq
A^2_{m_{i-1}+1} - \frac{A^{2n_i+2}_{m_{i-1}+1}}{d_i^2
  \gamma_{n_i+1}^{n_i+1}}  
$$ for each $i$,  and we combine these estimates using Lemma
\ref{first Woods lemma} and an obvious induction. 
\end{proof}

\begin{remark}
Note that it is an open question to determine the numbers $\gamma_n$;
however, if we have a bound $\tilde{\gamma}_n \geq \gamma_n$ we may
substitute it into Proposition \ref{prop: combining Woods} in place of $\gamma_n$, as this
only makes the requirement \equ{eq: assumption of woods} stricter and
the conclusion \equ{eq: consequence}
weaker.  
\end{remark}

Our goal is to apply this method to the problem of bounding the covering
radius of stable lattices. We note:
\begin{proposition}\Name{prop: KZ of stable}
If $x$ is stable then we have the inequalities 
\eq{eq: defn KZ stable}{
A_1 \geq 1, \ \ A_1 A_2 \geq 1, \ \ \ldots \ \ A_1 \cdots A_{n-1} \geq 1.
}
\end{proposition}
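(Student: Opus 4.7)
The plan is essentially immediate given the definitions: the Korkine--Zolotarev procedure already constructs, at each stage $k$, a natural rank-$k$ subgroup of $x$ whose covolume is exactly $A_1 \cdots A_k$, and stability forces each such covolume to be at least $1$.

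More precisely, let $v_1, \ldots, v_n \in x$ be the basis arising from the KZ reduction process, and for each $k \in \{1, \ldots, n-1\}$ set
\[
\Lambda_k \defi \mathbb{Z} v_1 + \cdots + \mathbb{Z} v_k \subset x.
\]
This is a discrete subgroup of $x$ of rank $k$, so by the definition of stability, $|\Lambda_k| \geq 1$. It remains to identify $|\Lambda_k|$ with $A_1 A_2 \cdots A_k$. For this I would invoke the same Gram--Schmidt computation already used in the proof of \eqref{eq: claim 1}: if $u_1, \ldots, u_k$ are the Gram--Schmidt orthogonalizations of $v_1, \ldots, v_k$, then $|\Lambda_k| = \prod_{i=1}^{k} \|u_i\|$. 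But by the definition of the KZ coefficients, $u_i$ is precisely the orthogonal projection of $v_i$ onto $\operatorname{span}(v_1, \ldots, v_{i-1})^\perp$, and $A_i = \|u_i\|$. Hence
\[
|\Lambda_k| = A_1 A_2 \cdots A_k \geq 1,
\]
which is exactly \eqref{eq: defn KZ stable}.

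There is no real obstacle: the only thing to verify is the compatibility between the Gram--Schmidt formula for lattice covolume and the inductive definition of $A_i$, which is essentially tautological. Note also that the case $k = n$ is consistent with \eqref{eq: det one}, where equality $A_1 \cdots A_n = 1$ holds because $x$ is unimodular; the proposition stops at $k = n-1$ because the final inequality is then automatic.
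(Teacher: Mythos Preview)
Your proof is correct and follows exactly the same approach as the paper: identify $A_1\cdots A_k$ as the covolume of the subgroup generated by $v_1,\ldots,v_k$ and invoke stability. Your write-up is in fact more detailed than the paper's, which simply asserts this identification in one sentence.
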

\begin{proof}
In the above terms, the number $A_1 \cdots A_i$ is equal to
$|\Lambda|$ where $\Lambda$ is the subgroup of $x$ generated by $v_1,
\ldots, v_i$. 
\end{proof}
This motivates the following:
\begin{definition}
We say that an $n$-tuple of positive real numbers $A_1, \ldots, A_n$
is {\em KZ stable} if the inequalities \equ{eq: det one}, \equ{eq: KZ
  bounds}, \equ{eq: defn KZ stable} are 
satisfied. We denote the set of KZ stable $n$-tuples by
$\mathrm{KZS}$. 
\end{definition}
Note that $\mathrm{KZS}$ is a compact subset of
$\R^n$. Recall that a {\em composition of $n$} is an ordered $k$-tuple $(n_1,
\ldots, n_k)$ of positive integers, such that $n=n_1+\ldots +n_k$. As
an immediate application of Corollary \ref{cor: for 
  Minkowski 1} and
Propositions \ref{prop: combining Woods} and \ref{prop: KZ of stable}
we obtain:
\begin{theorem}\Name{thm: use of KZ diagonal}
For each composition $\cI \df (n_1, \ldots, n_k)$ of $n$, define $m_i, d_i$ by
\equ{eq: defn mi di} and let $\mathcal{W}(\cI)$ denote
the set 
$$\left\{(A_1, \ldots, A_n): \forall
  i, \, 
\equ{eq: assumption of woods} \text{ holds, and } \sum_{i=1}^k \left(A^2_{m_{i-1}+1} -
  \frac{A^{2n_i+2}_{m_{i-1}+1}}{d_i^2 \gamma_{n_i+1}^{n_i+1}} \right) \leq
  \frac{n}{4} \right \}.
$$
If 
\eq{eq: covering suffices}{
\mathrm{KZS} \subset \bigcup_{\cI} \cW(\cI)
}
then Minkowski's conjecture holds in
dimension $n$. 
\end{theorem}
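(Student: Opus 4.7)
The plan is to assemble the theorem directly from the pieces already developed in the excerpt; no new geometric input is required.

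First, by Corollary \ref{cor: for Minkowski 1}, to conclude Minkowski's conjecture in dimension $n$ it suffices to prove that every stable lattice $x \in \Xn$ satisfies $\covrad(x) \leq \sqrt{n}/2$, equivalently $\covrad^2(x) \leq n/4$. So I fix a stable $x \in \Xn$ and choose a Korkine--Zolotarev basis $v_1, \ldots, v_n$ of $x$, with associated diagonal KZ coefficients $A_1, \ldots, A_n$. (If multiple choices are available, any one will do, since the bound we shall produce is a property of $x$ alone, not of the basis.)

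Next I verify that the tuple $(A_1, \ldots, A_n)$ lies in $\mathrm{KZS}$. The identity \equ{eq: det one} holds because $x$ is unimodular; the inequalities \equ{eq: KZ bounds} are the classical Korkine--Zolotarev estimates recalled in the excerpt; and the inequalities \equ{eq: defn KZ stable} hold by Proposition \ref{prop: KZ of stable}, since for each $1 \leq i \leq n-1$ the product $A_1 \cdots A_i$ equals $|\Lambda_i|$ where $\Lambda_i$ is the sublattice generated by $v_1, \ldots, v_i$, and stability of $x$ forces $|\Lambda_i| \geq 1$.

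By the hypothesis \equ{eq: covering suffices}, there is some composition $\cI = (n_1, \ldots, n_k)$ of $n$ such that $(A_1, \ldots, A_n) \in \cW(\cI)$. Membership in $\cW(\cI)$ includes the assertion that the hypothesis \equ{eq: assumption of woods} of Proposition \ref{prop: combining Woods} is satisfied for this composition $\cI$ and these $A_i$. Applying Proposition \ref{prop: combining Woods} yields the upper bound \equ{eq: consequence} on $\covrad^2(x)$. The second clause in the definition of $\cW(\cI)$ says precisely that the right-hand side of \equ{eq: consequence} is at most $n/4$. Combining, $\covrad^2(x) \leq n/4$, which completes the proof when fed back into Corollary \ref{cor: for Minkowski 1}.

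There is no real obstacle: the theorem is a packaging result that chains together Corollary \ref{cor: for Minkowski 1}, Proposition \ref{prop: KZ of stable}, and Proposition \ref{prop: combining Woods}, using the definition of $\mathrm{KZS}$ and $\cW(\cI)$ as the bookkeeping device that matches the hypotheses of the latter proposition to the data carried by a stable lattice. The only minor point worth flagging is that the KZ coefficients are defined uniquely outside a measure zero set; but since the argument above works for any valid choice of KZ basis, this indeterminacy is harmless.
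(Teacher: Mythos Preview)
Your proof is correct and follows exactly the approach the paper intends: it is the direct concatenation of Corollary~\ref{cor: for Minkowski 1}, Proposition~\ref{prop: KZ of stable}, and Proposition~\ref{prop: combining Woods}, with the definitions of $\mathrm{KZS}$ and $\cW(\cI)$ serving as the bookkeeping that links them. The paper itself states the theorem as an ``immediate application'' of precisely these three results, and you have simply written out the chain explicitly.
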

Rajinder Hans-Gill has informed the authors  that using
arguments as in \cite{hans-gill1, hans-gill2}, it is possible to
verify \equ{eq: covering suffices} 
in dimensions up to 7, thus
reproving Minkowski's conjecture in these dimensions. 

\subsection{Local maxima of covrad}
The aim of this subsection is to prove Corollary \ref{cor: Minkowski}, 
which shows that in
order to establish that all stable lattices in $\R^n$ satisfy the covering
radius bound \equ{eq: covrad}, it suffices to check this on a finite
list of lattices in each dimension $d \leq n$.


The function $\covrad : \Xn \to \R$ may have local maxima, in the
usual sense; that is, lattices $x \in \Xn$ for which there is a
neighborhood $\mathcal{U}$ of $x$ in $\Xn$ 
such that for all $x' \in \cU$ we have $\covrad(x') \leq
\covrad(x)$. Dutour-Sikiri\'c, Sch\"urmann and Vallentin
\cite{mathieu} gave a geometric characterization of lattices
which are local maxima of the function 
$\covrad$, and showed that there are finitely many in each dimension.
Corollary \ref{cor: Minkowski} asserts that Minkowski's conjecture
would follow if all local maxima of covrad satisfy the bound \equ{eq:
  covrad}.   
\begin{proof}[Proof of Corollary \ref{cor: Minkowski}]
We prove by induction on $n$ that any stable lattice satisfies the
bound \equ{eq: covrad} and apply Corollary \ref{cor: for Minkowski 1}. 
Let $\cS$ denote the set of stable lattices in $\Xn$. It is compact
so the function $\covrad$ attains a maximum on $\cS$, and it suffices
to show that this maximum is at most $\frac{\sqrt{n}}{2}$. Let $x \in \cS$
be a point at which the maximum is attained. If $x$ is an interior
point of $\cS$ then necessarily $x$ is a
local maximum for $\covrad$ and the required bound holds by
hypothesis. Otherwise, there is a sequence $x_j \to x$ such that
$x_j \in \Xn \sm \cS$; thus each $x_j$ contains a discrete subgroup
$\Lambda_j$ with $|\Lambda_j| <1$ and $r(\Lambda_j) <n$. Passing to a subsequence we may
assume that that $r(\Lambda_j)=k<n$ is the same for all $j$, and
$\Lambda_j$ converges to a discrete subgroup $\Lambda$ of $x$. Since
$x$ is stable we must have $|\Lambda|=1$. Let $\pi: \R^n \to (\spa
\Lambda)^{\perp}$ by the orthogonal projection and let 
$\Lambda' \df \pi(x)$. 

It suffices to show that both $\Lambda$
and $\Lambda'$ are stable. Indeed, if this holds then by the induction
hypothesis, both $\Lambda$
and $\Lambda'$ satisfy \equ{eq: 
  covrad} in their respective dimensions $k, n-k$, and by Lemma \ref{first
  Woods lemma}, so does $x$. To see that $\Lambda$ is stable, note
that any subgroup $\Lambda_0 \subset \Lambda$ is also a subgroup of
$x$, and since $x$ is stable, it satisfies $|\Lambda_0| \geq 1$.   To
see that $\Lambda'$ is stable, note that if $\Lambda_0 \subset
\Lambda'$ then $\widetilde{\Lambda_0} \df x \cap \pi^{-1}(\Lambda_0)$ is
a discrete subgroup of $x$ so satisfies $|\widetilde{\Lambda_0}| \geq
1$. Since $|\Lambda|=1$ and $\pi$ is orthogonal, we argue as in the
proof of \equ{eq: claim 1} to obtain
$$1 \leq |\widetilde{\Lambda_0}| = |\Lambda | \cdot |\Lambda_0| =
|\Lambda_0|,$$
so $\Lambda'$ is also stable, as required. 
\end{proof}

In \cite{mathieu}, it was shown that there is a unique local maximum
for covrad in dimension 1, none in dimensions 2--5, and a unique one in
dimension 6. Local maxima of covrad in dimension 7 are classified in
the manuscript \cite{mathieu2}; there are 2 such lattices. Thus
in total, in dimensions $n \leq 7$ there are 4 local maxima of the
function covrad. We
were informed by Mathieu Dutour-Sikiri\'c that these lattices all satisfy the covering radius bound
\equ{eq: covrad}. Thus Corollary  \ref{cor: Minkowski} yields another proof of
Minkowski's conjecture, in dimensions $n \leq 7$. In \cite[\S
7]{mathieu}, an infinite list of lattices (denote there by  $[L_n,
Q_n]$) is defined. The list consists of  one lattice in each
dimension $n \geq 6$, each of which is a local maximum for the
function covrad, and satisfies the bound \equ{eq:
  covrad}. It is expected that for each $n$, this lattice has the largest covering
radius among all
local maxima in dimension $n$. In light of Corollary \ref{cor: Minkowski}, the validity
of the latter assertion would imply  Minkowski's conjecture in all
dimensions.

\section{A volume computation}
\Name{sec: volume computation}
The goal of this section is the following. 
\begin{theorem}\Name{vol est theorem}
Let $m$ denote the $G$-invariant probability measure on
$\Xn$ derived from Haar measure on $G$, and let $\cS^{(n)} $
denote the subset of stable lattices in $\Xn$. Then $m\left(\cS^{(n)} \right)\lra 1$ as $n \to \infty$. 
\end{theorem}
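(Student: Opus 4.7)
The plan is to bound $m(\Xn \sm \cS^{(n)})$ by a union bound over the possible ranks of destabilizing sublattices, to evaluate each term in the bound using Siegel's mean-value formula, and to verify that the resulting sum tends to zero as $n \to \infty$.

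As a preliminary observation, if $\Lambda \subset x$ has rank $k$, its saturation $\Lambda^{\mathrm{sat}} \df x \cap \spa(\Lambda)$ satisfies $r(\Lambda^{\mathrm{sat}}) = r(\Lambda)$ and $|\Lambda^{\mathrm{sat}}| \le |\Lambda|$, since $\Lambda \subset \Lambda^{\mathrm{sat}}$ is a finite-index inclusion. Consequently, $x$ is stable if and only if every primitive sublattice of $x$ has covolume at least $1$. Setting
\begin{equation*}
N_k(x) \df \#\bigl\{\Lambda \subset x : \Lambda \text{ primitive}, \, r(\Lambda) = k, \, |\Lambda| < 1\bigr\},
\end{equation*}
one has $\Xn \sm \cS^{(n)} \subset \bigcup_{k=1}^{n-1}\{N_k \ge 1\}$, and Markov's inequality yields
\begin{equation*}
m(\Xn \sm \cS^{(n)}) \le \sum_{k=1}^{n-1}\int_{\Xn} N_k(x)\, dm(x).
\end{equation*}

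The integrals $\int N_k\, dm$ are then evaluated using Siegel's mean-value formula \cite{SiegelFormula}, in the higher-rank primitive form developed by Thunder \cite{Thunder}. In the simplest case $k=1$, the formula gives $\int_{\Xn} N_1 \, dm = \Vol(B_n)/\zeta(n)$, where $B_n$ is the open unit ball in $\R^n$, and this quantity decays super-exponentially like $(2\pi e/n)^{n/2}/\sqrt{\pi n}$. For general $k$, Thunder's computation expresses $\int N_k\, dm$ as an integral of an explicit density over a moduli space of primitive rank-$k$ sublattices of $\R^n$ with bounded covolume, with the answer a product of values of $\zeta$ times a geometric factor (a volume of rank-$k$ lattices of covolume less than $1$, up to $\GL_k(\Z)$ on the right).

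The main obstacle is the asymptotic analysis: one must show that the sum $\sum_{k=1}^{n-1}\int N_k\,dm$ goes to zero as $n \to \infty$. Each individual term decays super-exponentially, but there are $n-1$ of them, and the $k$-dependence of the constants appearing in Thunder's formula must be controlled uniformly. I would extract explicit bounds from \cite{Thunder} and use standard estimates on the gamma- and zeta-factors to obtain a summable upper bound; the convergence will not be automatic and careful balancing of the parameters is required. Once Theorem \ref{vol est theorem} is established in this way, the strengthening in Proposition \ref{prop: strengthening volume} should follow from the same scheme, with the covolume threshold $1$ in the definition of $N_k$ replaced by a suitable sequence $c_n$ tending to infinity slowly enough that the modified sum still goes to zero.
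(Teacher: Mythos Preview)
Your proposal is correct and follows essentially the same route as the paper: the paper likewise decomposes $\Xn\smallsetminus\cS^{(n)}$ according to the rank $k$ of a destabilizing (primitive) sublattice, bounds $m(\Xn\smallsetminus\cS^{(n)}_k)$ by the Siegel--Weil integral $\int_{\Xn}\hat{\chi}_1\,dm$ (your $\int N_k\,dm$, up to a harmless factor of $2$ from $\pm w_\Lambda$), invokes Thunder's computation to get the explicit value $B(n,k)/n$, and then carries out precisely the ``gamma- and zeta-factor'' estimate you anticipate, obtaining $B(n,k)\le(C/n)^{k(n-k)/2}$ via Stirling. Your last paragraph about replacing the threshold $1$ by a growing $c_n$ is also exactly how the paper obtains Proposition~\ref{prop: strengthening volume}.
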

Recalling the notation \equ{eq: k quantities}, for $k=1, \ldots, n-1$,
let 
$$
\cS^{(n)}_k(t) \defi\set{x\in \Xn: \al_k(x)\ge t}, \ \ \ \cS^{(n)}_k
\df \cS^{(n)}_k(1).
$$
It is clear that 
$\cS^{(n)}=\bigcap_{k=1}^{n-1}\cS^{(n)}_k$. 
In order to prove Theorem~\ref{vol est theorem} it is enough to prove
that 
\eq{1312}{
\max_{k=1, \ldots, n-1} m\left(\Xn \smallsetminus \cS^{(n)}_k \right)
= o\left(\frac1n \right),  
}
as this implies 
\begin{align*}
m\left(\cS^{(n)}\right )&= 1-m\left(\Xn\smallsetminus \cap_{k=1}^{n-1}
\cS^{(n)}_k \right)=1-m\left( \cup_{k=1}^{n-1} \left(\Xn\smallsetminus
  \cS^{(n)}_k \right) \right)\\ 
&\ge 1-\sum_{k=1}^{n-1}m \left(\Xn \smallsetminus
\cS^{(n)}_k\right)=1-(n-1)o\left(\frac1n\right)\overset{n\to\infty}{\lra}1. 
\end{align*}

We will actually prove a bound which is stronger than \equ{1312}, namely:
\begin{proposition}\Name{prop: strengthening volume} There is $C_1>0$
  such that if we set 
\eq{eq: choice of t}{
t_k=t(n,k) \df \left(\frac{n}{C_1} \right)^{\frac{k(n-k)}{2n} },
} 
then 
$$ \max_{k=1, \ldots, n-1} m\left(\Xn \smallsetminus
  \cS^{(n)}_k\left(t_k\right) \right) =o 
\left( \frac1n \right).
$$
In particular, $m\left(\bigcap_{k=1}^{n-1}
  \cS^{(n)}_k\left(t_k\right ) \right) \to_{n \to \infty} 1.$
\end{proposition}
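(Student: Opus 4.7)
The plan is to prove the estimate
$$ m\!\left(\Xn \setminus \cS^{(n)}_k(t_k)\right) = o(1/n) $$
separately for each $k \in \{1, \ldots, n-1\}$ uniformly in $k$; the ``in particular'' claim then follows by the same union-bound argument already displayed between \eqref{1312} and the proposition. As a preliminary reduction, observe that by definition of $\al_k$, the event $\al_k(x) < t_k$ occurs if and only if $x$ admits some rank-$k$ subgroup $\Lambda$ with $|\Lambda| < t_k^k$; replacing $\Lambda$ by its primitive hull in $x$ only shrinks the covolume, so $\Lambda$ may be assumed primitive. Writing $N_k(x,T)$ for the number of primitive rank-$k$ subgroups of $x$ with covolume at most $T$, Markov's inequality gives
$$ m\!\left(\Xn \setminus \cS^{(n)}_k(t_k)\right) \leq \int_{\Xn} N_k\!\left(x, t_k^k\right) dm(x). $$

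The second step is to evaluate the right-hand side via the Siegel mean-value formula for primitive sublattices, extended to arbitrary rank by Thunder \cite{Thunder}. This yields a closed-form expression
$$ \int_{\Xn} N_k(x,T)\, dm(x) = \beta(n,k)\, T^n, $$
where $\beta(n,k)$ is built from Euclidean ball volumes $V_j$ (with $j$ ranging in an interval depending on $k$) and a product of zeta values $\zeta(n-k+1)\cdots\zeta(n)$. For $k=1$ this reduces to Siegel's classical formula $\beta(n,1) = V_n/(2\zeta(n))$, and for general $k$ the formula follows by an inductive unfolding over flag varieties, or equivalently from Thunder's direct Grassmannian computation; the duality $x\leftrightarrow x^*$ further gives $\beta(n,k)=\beta(n,n-k)$.

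The central step is the quantitative estimate on $\beta(n,k)$. Using the asymptotic $V_j \sim (2\pi e/j)^{j/2}/\sqrt{\pi j}$ together with Thunder's computations, one extracts, up to subexponential correction factors in $n$, a bound
$$ \beta(n,k)\, t_k^{nk} \leq \left(\tfrac{C_0}{C_1}\right)^{A(n,k)}, $$
where $C_0$ is an absolute constant and $A(n,k)$ grows at least linearly in $n$, uniformly in $k \in \{1,\dots,n-1\}$. The exponent $k(n-k)/(2n)$ in the definition of $t_k$ is calibrated precisely for this cancellation: the factor $t_k^{nk} = (n/C_1)^{k^2(n-k)/2}$ is engineered to neutralize the Grassmannian volume factor in $\beta(n,k)$, which scales like $(c/n)^{k(n-k)/2}$ (the Grassmannian $\mathrm{Gr}(k,n)$ having dimension $k(n-k)$), and to leave only the manageable ratio $C_0/C_1$ raised to a power of order $n$. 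Choosing $C_1$ strictly larger than $C_0$ then makes the right-hand side exponentially small in $n$, hence $o(1/n)$ uniformly in $k$, and summing over $k$ (as in the display following \eqref{1312}) completes the proof.

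The main obstacle is the matching in the third step: extracting from Thunder's formulas a bound on $\beta(n,k)$ sharp enough that a single absolute $C_1$, independent of $n$ and $k$, suffices for the exponential decay uniformly across all $k \in \{1,\dots,n-1\}$. The boundary cases $k = 1$ and $k = n-1$ are controlled directly by Siegel's classical formula together with the duality $\beta(n,k)=\beta(n,n-k)$; the harder intermediate values of $k$, where the Grassmannian $\mathrm{Gr}(k,n)$ carries its largest volume, are precisely where the full strength of Thunder's explicit volume estimates is required.
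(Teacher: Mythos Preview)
Your overall strategy---Markov's inequality followed by the Siegel--Thunder mean-value formula, then the estimate $B(n,k)\le (C/n)^{k(n-k)/2}$---is exactly the paper's approach, and your first two steps are set up correctly. In particular you are right that the event $\alpha_k(x)<t_k$ corresponds to the existence of a primitive rank-$k$ subgroup with $|\Lambda|<t_k^{\,k}$, so the relevant radius in $\bigwedge^k\R^n$ is $T=t_k^{\,k}$, not $t_k$.

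The gap is in your third step. You compute $t_k^{\,nk}=(n/C_1)^{k^2(n-k)/2}$ and quote $\beta(n,k)\asymp (c/n)^{k(n-k)/2}$, but these exponents of $n$ do \emph{not} cancel: their product carries a residual factor $n^{k(k-1)(n-k)/2}$, which for every $k\ge 2$ diverges super-exponentially in $n$. No constant $C_1$ can absorb this, so your asserted bound $\beta(n,k)\,t_k^{\,nk}\le(C_0/C_1)^{A(n,k)}$ with $A(n,k)$ linear in $n$ is simply false for $k\ge 2$. Indeed, with $t_k$ as in the statement one checks $t_k>\gamma_{n,k}$ for $k\ge 2$ and large $n$, so $\cS^{(n)}_k(t_k)=\varnothing$ and the assertion cannot hold as written. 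The paper's own computation sidesteps this by using $B(n,k)\,t_k^{\,n}/n$ rather than $B(n,k)\,(t_k^{\,k})^n/n$, which amounts to reading the condition $\hat\chi_t=0$ as $\alpha_k\ge t$ instead of $\alpha_k\ge t^{1/k}$. The intended statement evidently has exponent $(n-k)/(2n)$ in place of $k(n-k)/(2n)$ in the definition of $t_k$; with that correction $(t_k^{\,k})^n=(n/C_1)^{k(n-k)/2}$, the powers of $n$ cancel exactly against $B(n,k)$, and both your argument and the paper's yield $m\bigl(\Xn\setminus\cS^{(n)}_k(t_k)\bigr)\le \frac{1}{n}(C/C_1)^{k(n-k)/2}=o(1/n)$ uniformly in $k$.
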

Let
$$\gamma_{n,k}  \df \sup_{x \in \Xn} \alpha_k(x). 
$$
Recall that {\em Rankin's constants} or the {\em generalized Hermite's
  constants}, are defined as $\gamma_{n,k}^2$ (note that our notations
differ from traditional notations by a square root). 
Thunder \cite{Thunder} computed upper and lower bounds on
$\gamma_{n,k}$ and in particular established the growth
rate of $\gamma_{n,k}$. The numbers
$t(n,k)$ have the same growth rate. Thus
Proposition \ref{prop: strengthening volume} should
be interpreted as saying that the lattices in $\Xn$ for which the
value of each $\alpha_k$ is
close to the maximum possible value, occupy almost all of the measure of $\Xn$. 

The proof of Proposition \ref{prop: strengthening volume} relies on
Thunder's work, which in turn was based on a variant of Siegel's
formula~\cite{SiegelFormula} which relates the Lebesgue measure
on $\bR^n$ and the measure $m$ on $\Xn$. We now review Siegel's
method and Thunder's results.
 
In the sequel we consider $n \geq 2$ and $k \in \{1,
\ldots, n-1\}$ as fixed and omit, unless there is risk of confusion,
the symbols $n$ and $k$ from the notation.  
Consider the (set valued) map $\Phi=\Phi^{(n)}_k$ that assigns to
each lattice $x\in \Xn$ the following subset 
of $\wedge^k\bR^n$:
$$\Phi (x)\defi\set{\pm w_\Lam:\Lam \subset x\textrm{ a
    primitive subgroup with } r(\Lam)=k},$$ 
where $w_\Lam\defi v_1\wedge\dots\wedge v_k$ and $\set{v_i}_1^k$ 
forms a basis for $\Lam$ (note that $w_\Lam$ is well defined up to
sign, and $\Phi(x)$ contains both possible choices). 
Let $$\mathscr{V} = \mathscr{V}^{(n)}_k \defi
\set{v_1\wedge\dots\wedge v_k: v_i\in\bR^n} \sm \{0\}$$ 
be the variety of pure tensors in $\wedge^k\bR^n$. 
For any a compactly supported bounded Riemann integrable function $f$
on $\mathscr{V}$ set 
\eq{eq: finite sum}{\hat{f}: \Xn \to \R, \ \ \ \ 
  \hat{f}(x)\defi\sum_{w\in\Phi (x)}f(w).}
Then it is known  (see \cite{Weil}) that the (finite) sum \equ{eq: 
  finite sum} 
defines a function in $L^1(\Xn, m)$. 
Let  $\theta = \theta^{(n)}_k$ denote the Radon measure on $\mathscr{V}$
  defined by 
\begin{equation}\label{1420}
\int_{\mathscr{V}} f d\theta \defi\int_{\Xn} \hat{f} \, dm, \text{  for 
  \ } f\in C_c(\mathscr{V}).
\end{equation}

In this section we write $G=G_n \df \SL_n(\R)$. 
There is a natural transitive action of $G_n$ on 
$\mathscr{V} 
$ and the stabilizer of  
$e_1\wedge\dots\wedge e_k$ is the subgroup 
$$H= H^{(n)}_k \df \left\{ \smallmat{A&B\\0&D} \in G: 
A \in G_{k} , D \in G_{n-k} \right \}. $$
We therefore obtain an identification $\mathscr{V}\simeq G/H$ and view
$\theta$ as a measure on $G/H$.  

It is well-known 
(see e.g.~\cite{Raghunathans_book}) that up to a proportionality
constant there exists a unique $G$-invariant measure 
$m_{G/H}$ on $G/H$; moreover, given Haar
measures $m_{G}, m_{H}$ on $G$ and $H$ respectively, there is a
unique normalization of $m_{G/H}$ such that 
for any $f\in L^1(G,m_G)$
\eq{1440}{
\int_G f \, dm_G =\int_{G/H}\int_{H} f(gh) dm_{H}(h)dm_{G/H}(gH).
}
We choose the Haar measure $m_G$ so that it
descends to our probability measure $m$ on $\Xn$;  similarly, we  
choose the Haar measure $m_{H}$ so that the periodic orbit
$H\bZ^n \subset \Xn$ has volume 1. These choices of Haar measures
determine our measure $m_{G/H}$ unequivocally. 
It is clear from the defining formula~\eqref{1420} that $\theta$ is
$G$-invariant and therefore 
the two measures $m_{G/H}, \theta$ are proportional. In fact (see
\cite{SiegelFormula} for the case $k=1$ and  \cite{Weil} for the general case), 
\eq{eq: Siegel normalization}{m_{G/H} = \theta.
}
\ignore{
\begin{proof}
We need to
calculate the proportionality constant relating the measures. 
Choose a fundamental domain $F\subset G$ for $\Ga\defi \SL_n(\bZ)$ and
another fundamental domain $\hat{F}\subset H$ for $\hat{\Ga}\defi H\cap
\Ga$ and note that  by our choices 
$$m_G(F)=m_{H}(\hat{F})=1.$$ 
Let $\pi: G \to G/H$ be the natural projection. By the implicit
function theorem there is a bounded 
$U\subset G$ for which $\pi|_U$ is a homeomorphism onto its image and
the image is an open neighborhood of the identity coset.  
%
Since $H=\bigsqcup_{\hat{\ga}\in\hat{\Ga}}\hat{F}\hat{\ga} $ and the
  product map $U\times H\to G$ is injective,  
we find that 
\eq{2130}{
\chi_{UH}(g)=\sum_{\hat{\ga}\in\hat{\Ga}}\chi_{U\hat{F}}(g\hat{\ga}).
}
We now show that 
\eq{1655}{
\chi_{UH}(g)=\int_{H}\chi_{U\hat{F}}(gh)dm_{H}(h).
} 
Indeed, if $g\notin UH$ then both sides
of~\eqref{1655} vanish. Otherwise, 
write $g=u h$, and let 
$h_0\in H$  
such that $gh_0\in U\hat{F}$, so that the integrand is nonzero. Then there
are $u'\in U, \hat{f}\in \hat{F}$ such that $u hh_0=u'\hat{f}$. By the 
injectivity of $U\times H\to G$ we conclude that $u=u'$ and  
$h_0=h^{-1}\hat{f}$. That is, $\set{h_0\in H : gh_0\in U\hat{F}}=
h^{-1} \hat{F}$ and so for a given $g\in G$,  
the right hand side of~\eqref{1655}
equals $m_{H}(h^{-1}\hat{F})=1$ as desired.

As before, let $\E_1, \ldots, \E_n$ denote the standard basis of
$\R^n$. 
Given a lattice $x=g\Z^n$ corresponding to the coset $g\Gamma \in
\Xn$,  we have 
$$\Phi_k(x)=\set{g\ga (\E_1\wedge\dots\wedge \E_k):\ga \in\Ga'}$$ 
where $\Ga' \subset \Ga$ is some set of coset representatives of $\hat{\Ga}$ in $\Ga$. Note that when
$\ga_1, \ga_2$ are distinct elements of $\Ga'$, the two tensors  
$g\ga_i (e_1\wedge\dots\wedge e_k)$, $i=1,2 
$ are different. 
Under 
  identification $\mathscr{V}\simeq G/H$, we can think of a function $\vphi$ on $\mathscr{V}$  
as a function on $G$ which is right-$H$-invariant and 
for $x=g\Ga\in \Xn$ we have 
$$\hat{\vphi} (x) =\sum_{w\in \Phi(x)}\vphi(w)= \sum_{\ga \in\Ga'}\vphi(g\ga).$$
Then
\begin{align}\label{1249}
\nonumber \int_G\chi_{U\hat{F}}\, dm_G&
\stackrel{\equ{1440}}{=}\int_{G/H}\int_{H}\chi_{U\hat{F}}(gh)dm_{H}(h)dm_{G/H}(gH)\\ 
&\overset{\equ{1655}}{=}\int_{G/H}\chi_{UH}(gH)dm_{G/H}(gH). 
\end{align} 
On the other hand
\begin{align}\label{1542}
\int_{G/H}\chi_{UH}\, d\theta &\stackrel{\equ{1420}}{=}\int_{\Xn}
\widehat{(\chi_{UH})} \, dm \\  
\nonumber&\overset{\equ{1249}}{=}\int_F\sum_{\ga' \in\Ga'}\chi_{UH}(g\ga')dm_G(g)\\ 
\nonumber&\overset{\equ{2130}}{=}\int_F\sum_{\ga'\in\Ga'}
\sum_{\hat{\ga}\in\hat{\Ga}}\chi_{U\hat{F}}(g\ga'\hat{\ga})dm_G(g)\\   
\nonumber&=\int_F\sum_{\ga\in\Ga}\chi_{U\hat{F}}(g\ga)dm_G(g)=\int_G\chi_{U\hat{F}}
\, dm_G.
\end{align}
By~\eqref{1249} and \eqref{1542} we have $\int_{G/H} \chi_{UH} \,
dm_{G/H} = \int_{G/H} \chi_{UH} 
d\theta$, and this integral is finite and  positive since 
$UH$ is open with compact closure. Thus 
the proportionality  
constant relating the two measures 
must be 1. 
\end{proof}
}
%

For $t>0$, 
let $\chi=\chi_t:\mathscr{V}\to\bR$ be the restriction to $\mathscr{V}$ of the
characteristic function of the ball of radius $t$ around the origin, in $\wedge^k\bR^n$. 
Note that  
$\hat{\chi}(x)=0$ if and only if $x\in \cS^{(n)}_k(t)$ and furthermore,
$\hat{\chi}(x)\ge 1$ if $x\in \Xn\smallsetminus \cS^{(n)}_k(t)$.  
It follows that
\eq{eq: using chi}{m\left(\Xn\smallsetminus
\cS_k^{(n)}(t)\right)\le\int_{\Xn}\widehat{(\chi_t)} dm =\int_{\mathscr{V}}\chi_t
d\theta.
}

Let $V_j$ denote the volume of the Euclidean unit ball in
$\bR^j$ and let $\zeta$ denote the Riemann zeta function.  We will use
an unconventional convention $\zeta(1)=1$, which will make our
formulae simpler. 
For $j \geq 1$, define 
$$
R(j) \df 
 \frac{j^2 V_j}{ \zeta(j)}  
$$
and 
$$B( n,k)\defi \frac{\prod_{j=1}^nR(j)}{\prod_{j=1}^k R(j)\prod_{j=1}^{n-k}R(j)}.$$
The following calculation was carried out in~\cite{Thunder}.
\begin{theorem}[Thunder]{\label{Thunder}}
For $t>0$, we have 
$$\int_{\mathscr{V} } \chi_t \, dm_{G/H}
=B( n,k)\frac{ t^n}{n}.$$ 
\end{theorem}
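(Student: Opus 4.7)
The plan is to decompose $m_{G/H}$ into radial and angular parts via a polar-type parametrization of $\mathscr V$, determine the radial weight from the parabolic structure of $H$, and identify the resulting angular constant with $B(n,k)$ via the Siegel--Minkowski volume formula.

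First, I would parametrize $\mathscr V$ using the one-parameter subgroup $R = \{b_t:t>0\}\subset G$ with $b_t = \diag(t^{(n-k)/n},\dots,t^{(n-k)/n},t^{-k/n},\dots,t^{-k/n})$ (the first $k$ diagonal entries equal $t^{(n-k)/n}$), together with $K = \SO(n)$. Since $b_t w_0 = t^{k(n-k)/n}w_0$, the map $K\times R\to\mathscr V$, $(k,b_t)\mapsto k b_t w_0$, has fiber the compact group $M \df K\cap H = \SO(k)\times\SO(n-k)$, yielding a polar-type decomposition $\mathscr V \cong (K/M)\times R$ whose radial coordinate is $r = \|w\| = t^{k(n-k)/n}$.

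Second, I would express $m_{G/H}$ in these coordinates using the $KP$-decomposition for the maximal parabolic $P \df RH$ whose unipotent radical $N$ has dimension $k(n-k)$. A standard computation shows that the modular function of $P$ evaluated at $b_t$ equals $t^{k(n-k)}$, and the Iwasawa-type factorization of Haar measure then gives $dm_{G/H} = C\,dk\otimes t^{k(n-k)-1}\,dt$ on $K\times R$ for an explicit positive constant $C$ depending only on the Haar normalizations. Converting from $t$ to $r = t^{k(n-k)/n}$ yields
\[
dm_{G/H} \;=\; C\cdot\frac{n}{k(n-k)}\cdot r^{n-1}\,dr\otimes d\sigma(kM),
\]
where $d\sigma$ is the pushforward of $dm_K$ to $K/M$. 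Integrating $\chi_t$ against this produces $\int_{\mathscr V}\chi_t\,dm_{G/H} = (C\,\sigma(K/M)/(k(n-k)))\,t^n$, so the theorem reduces to verifying $C\cdot\sigma(K/M) = (k(n-k)/n)\,B(n,k)$.

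To pin down the constants I would invoke the Siegel-type normalizations imposed in the paper ($m_G(X_n) = 1$ and $m_H(H\Z^n) = 1$). Decomposing $H = L\ltimes N$ with $L = \SL_k(\R)\times\SL_{n-k}(\R)$, and noting $H\cap\Gamma \cong (\SL_k(\Z)\times\SL_{n-k}(\Z))\ltimes\Z^{k(n-k)}$, the normalization of $m_H$ is determined by Minkowski--Siegel volume formulas for $\SL_j(\R)/\SL_j(\Z)$, producing factors of $\zeta(j)$. Meanwhile $\sigma(K/M)$ is evaluated via the chain of fibrations $\SO(j)\to\SO(j)/\SO(j-1) = S^{j-1}$, producing factors of $jV_j$ (the surface area of $S^{j-1}$). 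Together with the normalization of $m_G$, a direct product manipulation of the resulting $\zeta$-values and $V_j$-values assembles the announced $B(n,k) = \prod_{j=1}^n R(j)/(\prod_{j=1}^k R(j)\prod_{j=1}^{n-k}R(j))$ with $R(j) = j^2 V_j/\zeta(j)$. The principal obstacle is precisely this bookkeeping: keeping track of the Haar-normalization conventions along the chains $M\subset K\subset G$ and $N\subset H\subset G$, and then assembling the Siegel--Minkowski $\zeta$-factors with the spherical $V_j$-factors into the precise ratio asserted. Once this identification is verified, the theorem follows immediately from the polar-coordinate integration above.
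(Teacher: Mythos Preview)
The paper does not actually prove this statement: it is quoted from Thunder \cite{Thunder} with the remark ``The following calculation was carried out in~\cite{Thunder}'', and no argument is supplied. So there is no proof in the paper to compare your proposal against.

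That said, your outline is a reasonable reconstruction of how such a formula is established, and is in the spirit of Thunder's computation. The polar parametrization via $K/M\times R$ and the reduction to $r^{n-1}\,dr$ on the radial factor are correct (your Jacobian computation checks out), so the $t^n/n$ shape falls out immediately and the entire content is, as you say, in identifying the angular constant with $B(n,k)$. This identification is indeed done via the Siegel--Minkowski covolume formula applied to $\SL_j(\R)/\SL_j(\Z)$ for $j=n,k,n-k$, together with the standard evaluation of the volumes of Stiefel/Grassmann manifolds; the $\zeta$-factors come from the former and the $V_j$-factors from the latter. What you have not done is carry out that bookkeeping, and that is precisely where all the substance lies --- getting the exact combination $R(j)=j^2V_j/\zeta(j)$ (note the $j^2$, not $j$) requires care with the choice of Haar measures and with how the unipotent radical contributes. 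As a proof \emph{proposal} this is fine; as a proof it is incomplete in exactly the place where Thunder's paper does the work.
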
 

We will need to bound $B( n,k)$. 
\begin{lemma}\Name{lem: bound on Bin}
There is $C> 0$ so that for all large enough $n$ and
all $k=1, \ldots, n-1$, 
\eq{eq: bound on Bin}{B( n,k)\leq
  \left(\frac{C}{n}\right)^{\frac{k(n-k)}{2}}.
}
\end{lemma}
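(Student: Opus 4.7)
The plan is to first simplify $B(n,k)$ by cancelling common factors. The definition gives
$$B(n,k)\;=\;\frac{\prod_{j=n-k+1}^{n}R(j)}{\prod_{j=1}^{k}R(j)}\;=\;\prod_{i=1}^{k}\frac{R(n-k+i)}{R(i)}.$$
Moreover $B(n,k)=B(n,n-k)$, and since the exponent $k(n-k)$ in the claimed bound is also symmetric under $k\leftrightarrow n-k$, I may assume $1\le k\le n/2$.

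Second, I substitute the explicit formula $R(j)=\frac{j^{2}\pi^{j/2}}{\zeta(j)\,\Gamma(j/2+1)}$ and handle the $\zeta$-factors as a bounded multiplicative error: since $1\le\zeta(j)\le\pi^{2}/6$ for $j\ge 2$ and $\zeta(1)=1$ by convention, their contribution to $\log B(n,k)$ is $O(k)$. To the remaining factors I apply Stirling's formula
$$\log\Gamma(j/2+1)\;=\;\frac{j}{2}\log\frac{j}{2}\;-\;\frac{j}{2}\;+\;\frac{1}{2}\log(\pi j)\;+\;O(1/j),$$
which yields
$$\log R(j)\;=\;-\,\frac{j}{2}\log j\;+\;\frac{j}{2}\log(2\pi e)\;+\;\frac{3}{2}\log j\;+\;O(1).$$
Summing over $i=1,\dots,k$ along the telescoping product gives
$$\log B(n,k)\;=\;-\,\frac{1}{2}\sum_{i=1}^{k}\bigl[(n-k+i)\log(n-k+i)-i\log i\bigr]\;+\;\frac{k(n-k)}{2}\log(2\pi e)\;+\;\frac{3}{2}\log\binom{n}{k}\;+\;O(k),$$
the $\frac{3}{2}\log\binom{n}{k}$ arising from summing the $\frac{3}{2}\log j$ correction across the telescoping product.

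Third, I estimate the main sum. Using the antiderivative identity $(n-k+i)\log(n-k+i)-i\log i=\int_{i}^{n-k+i}(1+\log t)\,dt$ and Fubini, the sum equals $\int_{1}^{n}(1+\log t)\mu(t)\,dt$, where $\mu(t)\df\#\{i\in\{1,\dots,k\}:i\le t\le n-k+i\}$. For $k\le n/2$ one finds $\mu(t)=\lfloor t\rfloor$ on $[1,k]$, $\mu(t)=k$ on $[k,n-k]$, and $\mu(t)$ decreases linearly from $k$ to $0$ on $[n-k,n]$. A direct evaluation of the three resulting integrals yields
$$\sum_{i=1}^{k}\bigl[(n-k+i)\log(n-k+i)-i\log i\bigr]\;=\;k(n-k)\log(n-k)\;+\;\frac{k^{2}}{2}\log(n/k)\;+\;O(k^{2}),$$
and since $\log(n-k)\ge\log n-\log 2$ under $k\le n/2$ and $\log(n/k)\ge 0$, the right-hand side is at least $k(n-k)\log n-O(k(n-k))$.

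Combining these estimates gives $\log B(n,k)\le-\frac{k(n-k)}{2}\log n+O(k(n-k))$, which exponentiates to $B(n,k)\le(C/n)^{k(n-k)/2}$ for some constant $C>0$, as claimed. The main obstacle is the bookkeeping of error terms: one must verify that the sub-leading contributions — the $O(1)$ per factor from Stirling (summing to $O(k)$), the $O(\log\binom{n}{k})=O(n)$ contribution, and the $O(k^{2})$ pieces from the two boundary intervals — are all dominated by $O(k(n-k))$. The assumption $k\le n/2$ is used crucially here: it gives $n-k\ge n/2$ and hence $n\le 2(n-k)\le 2k(n-k)$ for $k\ge 1$, so even the largest error $O(n)$ is absorbed into $O(k(n-k))$.
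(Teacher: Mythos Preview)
Your argument is correct in outline and reaches the right conclusion, but one of the stated error terms is slightly off. In the evaluation of the main sum via the three integrals, the replacement of the step function $\mu(t)$ by its linear interpolant on the interval $[n-k,n]$ introduces an error of size $\int_{n-k}^{n}(1+\log t)\,dt = O(k\log n)$, not $O(k^{2})$; when $k$ is much smaller than $\log n$ this is not absorbed by $O(k^{2})$. Fortunately it \emph{is} absorbed by $O(k(n-k))$, since $\log n\le n/2\le n-k$ for large $n$, so your final inequality $\log B(n,k)\le -\tfrac{k(n-k)}{2}\log n+O(k(n-k))$ remains valid. You should adjust the displayed error from $O(k^{2})$ to $O(k^{2}+k\log n)$ and add this one-line check to your bookkeeping paragraph.

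As for the comparison with the paper: the paper takes a shorter route. Rather than passing to $\log R(j)$ and summing, it bounds each individual factor $R(n-k+j)/R(j)$ uniformly in $j$ by $(C/n)^{(n-k)/2}$, using a direct Stirling estimate of the ratio $\Gamma(j/2+1)/\Gamma((n-k+j)/2+1)$ together with the crude bound $\zeta(j)\,\bigl(\tfrac{n-k+j}{j}\bigr)^{2}\pi^{(n-k)/2}\le 4^{(n-k)/2}$ for large $n$. Multiplying the $k$ factors then gives $B(n,k)\le\bigl(c_{3}(16e/(n-1))^{(n-k)/2}\bigr)^{k}$, which is the desired bound. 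Your integral-summation approach is longer and requires more careful error tracking, but has the merit of extracting the genuine second-order term $\tfrac{k^{2}}{2}\log(n/k)$ in $\log B(n,k)$; the paper's factor-by-factor bound throws this away.
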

\begin{proof}In this proof $c_0, c_1, \ldots$ are constants independent of $n, k, j$. 
Because of the symmetry $B(n,k)=B(n,n-k)$ it is
enough to prove \equ{eq: bound on Bin} with $k\leq \frac{n}{2}.$
Using 
the formula
$V_j=\frac{\pi^{j/2}}{\Ga\left(\frac{j}{2}+1\right)}$ we obtain 
%
\begin{align*}
B(n,k)&=\prod_{j=1}^k\frac{R(n-k+j)}{R(j)}
=\prod_{j=1}^k\frac{\zeta(j)(n-k+j)^2\frac{\pi^{(n-k+j)/2}}{\Ga(\frac{n-k+j}{2}+1)}}
{\zeta(n-k+j)j^2\frac{\pi^{j/2}}{\Ga(\frac{j}{2}+1)}}\\
&=\prod_{j=1}^k \frac{\zeta(j)}{\zeta(n-k+j)}\cdot\pa{\frac{n-k+j}{j}}^2\cdot\pi^{\frac{n-k}{2}}\cdot
\frac{\Ga(\frac{j}{2}+1)}{\Ga(\frac{n-k+j}{2}+1)}.
\end{align*}
Note that $\zeta(s) \geq 1$ is a decreasing function of $s>1$, so
(recalling our convention $\zeta(1)=1$) 
$\frac{\zeta(j)}{\zeta(n-k+j)} \leq c_0 \df \zeta(2)$.
It follows that for all large enough $n$ and 
for any $1\le j\le k, $ 
\eq{eq: estimate first part}{
\frac{\zeta(j)}{\zeta(n-k+j)}\cdot\pa{\frac{n-k+j}{j}}^2\cdot\pi^{\frac{n-k}{2}}\le c_0
n^2\pi^{\frac{n-k}{2}}\le 4^{\frac{n-k}{2}}.
}
According to Stirling's formula, there are positive constants $c_1,
c_2$ such that for all $x \geq 2$, 
$$
c_1 \sqrt{\frac{2\pi}{x}}\left(\frac{x}{e} \right)^x \leq \Gamma(x)
\leq c_2 \sqrt{\frac{2\pi}{x}}\left(\frac{x}{e} \right)^x.
$$
We set $u \df \frac{j}{2}+1$ and $v \df \frac{n-k}{2} $, so that 
$u+v \geq \frac{n-1}{4}$, 
and obtain  
\eq{eq: estimate second part}{
\begin{split}
\frac{\Ga(\frac{j}{2}+1)}{\Ga(\frac{n-k+j}{2}+1)} & =
\frac{\Ga(u)}{\Ga(u+v)} \leq \frac{c_2}{c_1}
\sqrt{\frac{u+v}{u}}\frac{u^u}{(u+v)^{u+v}} \frac{e^{u+v}}{e^u} \\
& \leq c_3 e^v \frac{u^{u-1/2}}{(u+v)^{u+v-1/2}} = c_3
\left(\frac{e}{u+v}\right)^v \frac{1}{\left(1+\frac{v}{u}
  \right)^{u-1/2}}, 
\\
& \leq c_3 \left( \frac{4e}{n-1} \right)^{\frac{n-k}{2}}. 
\end{split}
}
Using \equ{eq: estimate first part} and \equ{eq: estimate
  second part} we obtain
$$
B( n,k) \leq \left[c_3 4^{\frac{n-k}{2}}
  \left(\frac{4e}{n-1}\right)^{\frac{n-k}{2}} \right]^k = \left[ c_3
  \left(\frac{16e}{n-1} \right)^{\frac{n-k}{2}} \right]^k.
$$
So taking $C > 16c_3 e$ 
we obtain \equ{eq: bound on
  Bin} for all large enough $n$. 
\end{proof}
\begin{proof}[Proof of Proposition \ref{prop: strengthening volume}]
Let $C$ be as in Lemma \ref{lem: bound on Bin} and let $C_1>C$. 
Then by \equ{eq: using chi}, \equ{eq: Siegel normalization} and 
Theorem~\ref{Thunder}, for all sufficiently large $n$ we have
\[\begin{split}
m\left(\Xn \smallsetminus \cS^{(n)}_k(t_k) \right) & \leq
B(n,k) \frac{t_k^n}{n} \\
& \leq \frac1n \left(\frac{C}{n} \right)^{\frac{k(n-k)}{2}}
\left(\frac{n}{C_1} \right)^{\frac{k(n-k)}{2}} =  \frac1n \left(\frac{C}{C_1} \right)^{\frac{k(n-k)}{2}}.
\end{split}
\]
Multiplying by $n$ and taking the maximum over $k$ we obtain 
$$
n \, \max_{k=1, \ldots, n}  m\left(\Xn \smallsetminus
  \cS^{(n)}_k(t_k) \right) \leq \left(\frac{C}{C_1}
\right)^{\frac{n-1}{2}} \to_{n\to \infty} 0.
$$
\end{proof}

\section{Closed $A$-orbits and well-rounded lattices} \Name{sec:
  closed orbits} 
It is an immediate consequence of Theorem \ref{thm: main} that any
closed $A$-orbit contains a stable lattice. The purpose of this
section is to show that the same is true for the set of well-rounded
lattices. 
Note that this was proved by McMullen for {\em compact} orbits but
for general  closed orbits, does not follow from his results. Our
proof relies on previous work of Tomanov and the second-named author
\cite{TW}, on \cite{gruber}, and on a covering result (communicated to
the authors by Michael Levin), whose proof is given in the appendix to
this paper. 

\begin{theorem}\Name{thm: closed orbits}
For any $n$, any closed orbit $Ax \subset \Xn$ contains a well-rounded lattice. 
\end{theorem}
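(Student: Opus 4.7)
I will adapt the covering-based strategy of Section~3 (which is McMullen's original scheme) to the setting of a closed orbit $Ax$, now producing a \emph{well-rounded} approximant rather than a stable one, and now without any compactness assumption on $Ax$. The three new ingredients, beyond what is used in the proof of Theorem~\ref{thm: main}, are: the structure theory for closed $A$-orbits in $\Xn$ from \cite{TW}; geometric extremal information from \cite{gruber}; and Levin's topological covering result (Theorem~\ref{thm: covering} in the appendix), which strengthens McMullen's Theorem~\ref{topological input}. Given these, I want to produce a sequence $a_j \in A$ with $a_j x$ converging to a well-rounded lattice $y$. Since $Ax$ is closed, $y \in \overline{Ax} = Ax$, which gives the theorem.

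The first step is to replace the cover $\cU^{x,\vre}$ of \eqref{the cover} with a variant tailored to the well-rounded property: for each $\vre>0$ and each $k \in \{1,\ldots,n\}$, let $U^{x,\vre}_k$ consist of those $a \in A$ such that the shortest nonzero vectors of $ax$ approximately span a $k$-dimensional subspace, in a way that is stable over a neighborhood of $k\vre$ in the relevant parameter. As in Section~3, a point of $U^{x,\vre}_n$ corresponds to a lattice $ax$ that is $\vre$-close to being well-rounded, and extracting a limit along $\vre_j \to 0$ then produces a genuine well-rounded lattice in $\overline{Ax}=Ax$. So the question reduces to showing that $U^{x,\vre}_n$ is non-empty for every $\vre>0$.

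The second step is to verify the hypotheses of a topological covering lemma for this modified cover. By analogy with Lemmas~\ref{lem: positive inradius} and~\ref{flat things}, the cover has positive inradius and the connected components of intersections of $k$ of its members are almost-affine of the expected dimension, provided one has a uniform positive lower bound on $c(\Lambda)$ (Definition~\ref{cv}) for discrete subgroups $\Lambda$ of the relevant lattices; cf.\ Theorem~\ref{finite distance from a group}. For bounded orbits this follows from Mahler's criterion as in Section~3. For a closed but unbounded $A$-orbit this uniform bound can fail along the directions in which the orbit escapes to infinity, and here the structure theorem of \cite{TW} enters: it identifies closed $A$-orbits with explicit algebraic data and describes their geometry at infinity, giving enough control to salvage the needed uniformity, perhaps only on sufficiently large complements in $A$. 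Results from \cite{gruber} on extremal lattice configurations provide quantitative input for these bounds.

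The final step is to apply Levin's covering theorem, which is precisely the tool designed to deduce non-emptiness of the top-index set of a cover under hypotheses weaker than McMullen's Theorem~\ref{topological input}, and in particular is insensitive to the escape of the orbit to infinity that obstructs the direct application of McMullen's argument. Applied to $\cU^{x,\vre}$, this yields a point of $U^{x,\vre}_n$ for every $\vre>0$, completing the argument via the limit described above. I expect the main obstacle to be the second step in the unbounded case: one must organize the degenerations of subgroups $\Lambda \subset ax$ that occur as $a$ escapes to infinity in $A$, using the algebraic description of closed orbits from \cite{TW}, so that the cover $\cU^{x,\vre}$ still verifies the hypotheses of Theorem~\ref{thm: covering}. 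This matching of the \cite{TW} structure theorem with the geometric almost-affinity condition is the heart of the argument and the reason the stronger covering lemma is required.
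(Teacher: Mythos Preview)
Your overall scheme --- modify the cover to detect well-roundedness, invoke the \cite{TW} structure theory, and apply Theorem~\ref{thm: covering} --- is correct, but you have misidentified what Theorem~\ref{thm: covering} actually contributes and hence where the real work lies.

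You describe Levin's result as having ``hypotheses weaker than McMullen's Theorem~\ref{topological input}'' and being ``insensitive to escape to infinity,'' and you frame the hard step as salvaging a uniform lower bound on $c(\Lambda)$ over $A$. That is not how the argument goes. Theorem~\ref{thm: covering} is about covers of $\Delta \times \R^t$, not of $\R^{n-1}$; it has a genuinely \emph{new} hypothesis~(i), the face-avoidance condition, and the almost-affine hypothesis~(ii) is only demanded on the $\R^t$ factor. The paper's proof uses \cite{TW, gruber} to produce a splitting $A = T_1 \times T_2$ with $T_2 x$ \emph{compact} and $T_1 x$ divergent along a specific coordinate-block structure $\R^n = \bigoplus V_i$. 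One then restricts the cover to $\Delta_\rho \times T_2$, where $\Delta_\rho$ is a large simplex in $T_1$. The almost-affine condition~(ii) is verified only in the $T_2$-direction, and there the compactness of $T_2 x$ gives the needed uniformity directly (this is Lemma~\ref{bdd dist from stab}) --- no salvaging is required. The divergent $T_1$-direction is handled instead via hypothesis~(i): on each face of $\Delta_\rho$ one coordinate block $V_j$ is expanded by $e^\rho$, and for $\rho$ large this forces the short vectors of $ax$ to lie in $\ker P_j$, so each component of $U_k$ misses some face $M_j$.

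So the ``heart of the argument'' is not matching \cite{TW} to almost-affinity, but rather matching the divergent factor $T_1$ to the simplex and verifying the face-avoidance condition using the block structure of the divergence. Your proposal never mentions hypothesis~(i), and your plan to obtain $c(\Lambda)$-bounds on ``large complements in $A$'' would not by itself produce a cover of the form required by Theorem~\ref{thm: covering}. You should also note that the limit extraction needs the $a_j$ to lie in the compact set $(\Delta_\rho \times T_2)x$; this is automatic once the argument is set up correctly but is not addressed in your outline.
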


We will require the following topological result which generalizes 
Theorem \ref{topological input}.  
Let $s,t$ be
natural numbers, and let $\Delta$ denote the 
$s$-dimensional simplex, which we think of concretely as $\mathrm{conv} (\E_1,
\ldots, \E_{s+1})$. 
We will discuss covers of $M \df \Delta \times \R^t $, and give conditions
guaranteeing that such a cover must cover a point at least $s+t+1$
times. 
For
$j=1, \ldots, s+1$ let $F_j$ be the face 
of $\Delta$ opposite to $\E_j$, that is $F_j = \mathrm{conv} (\E_i: i
\neq j)$. Also let $M_j \df F_j \times  \R^t $ be the corresponding
subset of $M$. 

\begin{theorem}
\Name{thm: covering}
Suppose that  $\cU$ is a cover of $M$ 
by open sets 
satisfying the following conditions:
\begin{enumerate}[(i)]
\item\Name{09301}
For any connected component $U$ of any element of  $\cU$ there exists $j$ 
such that $U \cap M_j = 
\varnothing.$  
\item\Name{09302}
There is $R$ so that for 
any connected component $U$ of the intersection of $k \leq s+t$ distinct 
elements of $\cU$, 
the projection of $U$ to $\R^t$ is $(R, s+t-k)$-almost
affine.
\end{enumerate}
Then
there is a point of $M $ which is covered at least
$s+t+1$ times.

\end{theorem}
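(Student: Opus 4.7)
The plan is to argue by contradiction: assuming every point of $M$ lies in at most $s+t$ elements of $\cU$, I would derive a topological obstruction modeled on the classical proof of the Lebesgue covering theorem, lifted to the relative pair $(M, \partial_\Delta M)$, where $\partial_\Delta M \df \partial \Delta \times \R^t = \bigcup_{j=1}^{s+1} M_j$.

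First I would pass to a locally finite refinement of $\cU$ (still satisfying (i) and (ii), possibly after enlarging $R$), choose a subordinate partition of unity, and use it to build a classifying map $\Phi \colon M \to N(\cU)$ into the nerve. By the standing assumption, the image of $\Phi$ lies in the $(s+t-1)$-skeleton $N(\cU)^{(s+t-1)}$. Using (i), I would label each connected component $U$ of each element of $\cU$ by an index $c(U) \in \{1, \ldots, s+1\}$ with $U \cap M_{c(U)} = \varnothing$; this turns $\Phi$ into a Sperner-type labeling, in the sense that $\Phi(M_j)$ lands in the subcomplex of $N(\cU)$ spanned by vertices whose labels avoid $j$.

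Condition (ii) then enters as a properness ingredient. Taking $k = s+t$ in the almost-affine estimate forces any $(s+t)$-fold intersection to project to an $R$-ball in $\R^t$, and combined with the compactness of $\Delta$, this confines the ``full multiplicity'' locus to a compact region $\Delta \times B$ for a sufficiently large Euclidean ball $B \subset \R^t$. Outside $\Delta \times B$, $\Phi$ factors through the $(s+t-2)$-skeleton, so $\Phi$ can be viewed as a map of pairs $(\Delta \times B,\, \partial(\Delta \times B)) \to \bigl(N(\cU)^{(s+t-1)},\, N(\cU)^{(s+t-2)}\bigr)$ satisfying Sperner-type boundary data on $\partial \Delta \times B$ and trivial boundary data on $\Delta \times \partial B$. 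A relative KKM/degree computation should then show that the Sperner constraint sends the top-dimensional class of $(\Delta \times B, \partial(\Delta \times B)) \simeq (D^{s+t}, S^{s+t-1})$ to a generator of the relative cohomology of the target; but that class must vanish because the target has dimension $s+t-1$, giving the required contradiction.

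The main obstacle is carrying out the Sperner/KKM step so that the component-wise labels assemble into a single map whose degree can be rigorously computed; this essentially amounts to a relative Knaster--Kuratowski--Mazurkiewicz theorem for the product pair $(M, \partial_\Delta M)$, adapted so that properness is provided by condition (ii) rather than by compactness of the ambient space. I would expect this to be precisely the topological content of the appendix by Michael Levin.
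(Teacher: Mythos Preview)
Your overall shape --- contradiction, nerve map, Sperner-type labelling from condition (i), degree/KKM contradiction --- matches the paper's Proposition~\ref{p3}, and that endgame is indeed how the appendix finishes. The gap is in how you use condition (ii).

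You invoke (ii) only for $k=s+t$, concluding that each connected component of an $(s+t)$-fold intersection projects to an $R$-ball in $\R^t$, and then assert that ``this confines the `full multiplicity' locus to a compact region $\Delta\times B$.'' That inference is unjustified: there can be infinitely many distinct $(s+t)$-tuples from $\cU$ with nonempty intersection, each individually bounded but collectively spread over all of $\R^t$. Local finiteness does not prevent this. So you cannot, from $k=s+t$ alone, arrange that $\Phi$ factors through the $(s+t-2)$-skeleton outside a compact set, and your properness step collapses. Relatedly, even granting compactness, your boundary condition on $\Delta\times\partial B$ is vague: condition (i) gives you Sperner data only on the $\partial\Delta$-faces, and you have no labelling principle on $\partial B$.

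The paper's argument uses condition (ii) for \emph{every} $k$, not just the top one. The almost-affine hypothesis translates into a uniform bound on the asymptotic dimension of the components of $k$-intersections (Definition~\ref{def: asdim}), and an inductive refinement procedure (Proposition~\ref{p2}, built from Lemmas~\ref{p1} and \ref{lemma0036}) produces a refinement of $\cU$ that is \emph{uniformly bounded} while keeping $\order\le s+t$. Only then is $\R^t$ replaced by a large simplex $\Delta_1$: since every element of the refined cover is bounded, its projection to $\R^t$ misses some face of $\Delta_1$, giving Sperner data on \emph{both} factors. Proposition~\ref{p3} then runs the degree argument you sketched, but on the compact product $\Delta\times\Delta_1$ with symmetric boundary conditions. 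In short, the asymptotic-dimension refinement is the substantive content that your outline is missing.
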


Note that hypothesis~\eqref{09302} is trivially satisfied when $k \leq s $,
since any subset of $\R^t$ is $(1, t)$-almost affine. 
Note also that Theorem \ref{topological input} is the case $s=0$ of this
statement. We give the proof of Theorem \ref{thm: covering} in the appendix. 

We will need some preparations in order to deduce Theorem~\ref{thm:
  closed orbits} from Theorem~\ref{thm: covering}. For $1\le d\le n$,
let $$\tb{I}^n_d\defi\set{1\le i_1<\dots<i_d\le n}$$  
denote the collection of multi-indices of length $d$ and 
for $J = (i_1, \ldots, i_d)\in\tb{I}^n_d$ let 
$e_J \df e_{i_1} \wedge \cdots \wedge e_{i_d}.
$
We equip $\bigwedge_1^d\bR^n$ with the inner product with
respect to which $\{e_J\}$ is an orthonormal basis, and denote by $\cE_{d,n}$
the quotient of $\bigwedge_1^d\bR^n$ by the equivalence relation $w 
\sim -w$. Note that the product of an element of $\cE_{d,n}$ with a
positive scalar is well-defined. We will (somewhat imprecisely) refer to elements of $\cE_{d,n}$
as vectors. Given a subspace $L \subset \bR^n$  with $\dim L= d$,
we denote by 
  $w_L\in \cE_{d,n}$ the image of a vector of norm one in
 $ \bigwedge_1^d L.$
If $\Lam \subset \bR^n$ is a discrete subgroup of rank $d$, we
  denote by $w_\Lam\in \cE_{d,n}$ 
the image of the vector 
$v_1\wedge\dots\wedge v_d,$ where $\set{v_i}_1^d$ forms a basis for
$\Lam$. The reader may verify that these vectors are well-defined and
satisfy $w_{\Lam} = |\Lam| w_L$ where $L = \spa \Lam$.  
We denote the natural action of $ G$ on $\cE_{d,n}$ arising from the
$d$-th exterior power of the linear action on $\bR^n$, by $(g, w)
\mapsto gw$. 
Given a subspace $L \subset \bR^n$ and a discrete subgroup $\Lam$ we set 
$$A_L\defi\set{a\in A:
    aw_L=w_L} \text{ and } A_\Lam \df \{a \in A: aw_\Lam = w_\Lam\}.$$
Note that the requirement  
$aw_L=w_L$ is equivalent to saying that $aL=L$ and $\det(a|_L)=1$.
Given a flag 
\begin{equation}\label{flag}
\crly{F}=\set{ 0 \varsubsetneq L_1\varsubsetneq\dots\varsubsetneq L_k\varsubsetneq \bR^n}
\end{equation} 
(not necessarily full), let 
$A_{\crly{F}}\defi \bigcap_i A_{L_i}.$
The {\em support} of an element $w \in \cE_{d,n}$ is the subset of
$\tb{I}^n_d$ for which the corresponding coefficients of an element of
$\bigwedge^d\bR^n$ representing $w$ are nonzero, and we write 
$\on{supp}(L)$ or $\on{supp}(\Lam)$ for the supports of $w_L$ and
$w_{\Lam}$. For $J = \set{i_1<\dots<i_d } \in \tb{I}^n_d$, set $\bR^J
\df \spa (e_{i_j})$ and define the multiplicative characters 
$$ \chi_J: A \to \R^*, \ \chi_J(a) \defi \det
(a|_{\bR^J}).$$
Then 
for any subspace $L\subset \bR^n$, 
\eq{1636}{A_L=\bigcap_{J \in \supp (L)} \ker \chi_J}
(and similarly for discrete subgroups $\Lam$). 
As in \S \ref{establishing
  topological input} we fix an invariant metric on $A$. In order to
verify hypothesis \eqref{09302} of Theorem \ref{thm: covering}, we will need the following
lemmas (cf. \cite[Theorem 6.1]{McMullenMinkowski}):
\begin{lemma}\label{bdd dist from stab}
Let $T \subset A$ be a closed subgroup and let $x\in\cL_n$ be a
lattice with a
compact $T$-orbit. Then for any $C>0$ there exists  
$R>0$ 
such that for any collection $\set{\Lam_i}$ of subgroups of $x$, there exists $b\in A$ such that 
\begin{equation}\label{2052}
\left \{a\in T:\forall i\; \norm{aw_{\Lam_i}}\le C \right\} \subset R
\text{-neighborhood of } b\pa{\bigcap_iA_{\Lam_i}}.
\end{equation}
\end{lemma}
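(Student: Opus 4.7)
The plan is to exploit compactness of $Tx$ to reduce an arbitrary $a \in T$ to the stabilizer $\mathrm{Stab}_T(x) \defi \{a \in T : ax = x\}$, and then to invoke the finiteness of sublattices of $x$ of bounded covolume. First I would use that $Tx$ compact means $\mathrm{Stab}_T(x)$ is cocompact in $T$, so I may choose a compact $K \subset T$ with $T = \mathrm{Stab}_T(x) \cdot K$, and for $a \in T$ write $a = \gamma k$ with $\gamma \in \mathrm{Stab}_T(x)$ and $k \in K$. Since $A$ is abelian, $aw_{\Lam_i} = kw_{\gamma\Lam_i}$, and by continuity of the exterior power actions of $K$ there is $c_K > 0$ with $\|kv\| \geq c_K\|v\|$ for all $k \in K$ and all $v$; hence $\|aw_{\Lam_i}\| \leq C$ forces $|\gamma\Lam_i| = \|w_{\gamma\Lam_i}\| \leq C' \defi C/c_K$. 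Each $\gamma\Lam_i$ is thus a subgroup of $x$ of covolume at most $C'$, and the set $\cF$ of all such subgroups is finite (by Mahler's criterion applied to the exterior powers of $x$, together with a bound on admissible indices in the primitive hull).

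Next I would pin down the ``stabilizer part'' of $a$ modulo $\bigcap_i A_{\Lam_i}$. Let $\cG \defi \{\gamma \in \mathrm{Stab}_T(x) : \gamma\Lam_i \in \cF \text{ for all } i\}$. Since $\gamma$ is diagonal with nonzero entries, $\supp(\gamma\Lam_i) = \supp(\Lam_i)$, and reading the $e_J$-coefficient of the identity $\gamma w_{\Lam_i} = w_{\gamma\Lam_i}$ yields $\chi_J(\gamma) = c_J^{\gamma\Lam_i}/c_J^{\Lam_i}$ for each $J \in \supp(\Lam_i)$, where $c_J^{\Lam}$ denotes the $e_J$-coefficient of $w_\Lam$. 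Finiteness of $\cF$ supplies $R' > 0$, independent of the collection $\{\Lam_i\}$, such that $|c_J^{\Lam}| \in [1/R', R']$ whenever $\Lam \in \cF$ and $J \in \supp(\Lam)$. Therefore, for any $\gamma, \gamma' \in \cG$ and any $J \in \bigcup_i \supp(\Lam_i)$,
$$
\bigl|\log \chi_J(\gamma^{-1}\gamma')\bigr| = \biggl|\log \frac{c_J^{\gamma'\Lam_i}}{c_J^{\gamma\Lam_i}}\biggr| \leq 2\log R'.
$$

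I would then translate this character bound into a distance bound using~\eqref{1636}: in logarithmic coordinates on $A$, $\bigcap_i A_{\Lam_i}$ corresponds to the common kernel of the linear functionals $\ell_J \defi \log\chi_J$ for $J \in \bigcup_i \supp(\Lam_i)$, so the distance of $\log(\gamma^{-1}\gamma')$ from this kernel is bounded by a uniform multiple of $\max_J |\ell_J(\log(\gamma^{-1}\gamma'))|$. Although $\bigcup_i \supp(\Lam_i)$ depends on the collection, it lies in the finite family of subsets of $\bigcup_k \tb{I}^n_k$, so the proportionality constant can be chosen uniformly. Picking any $\gamma_0 \in \cG$ (if $\cG = \varnothing$ the lemma is vacuous), we conclude $\cG \subset \gamma_0 \bigl(\bigcap_i A_{\Lam_i}\bigr) B_{R_1}$ for some uniform $R_1$; combined with $\{a \in T : \|aw_{\Lam_i}\| \leq C\ \forall i\} \subset \cG \cdot K$, this gives the containment in~\eqref{2052} with $b \defi \gamma_0$ and $R \defi R_1 + \diam(K)$.

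The main obstacle is the second step: obtaining the uniform bound on $|\chi_J(\gamma^{-1}\gamma')|$. It is crucial that $\gamma$ lies in $\mathrm{Stab}_T(x)$, so that $\gamma\Lam_i$ remains a subgroup of $x$ and falls in the \emph{finite} set $\cF$; for a general $a \in T$ the coordinates $c_J^{a\Lam_i}$ could degenerate and this linear-algebra argument would collapse, which is why the compactness of $Tx$ is indispensable and not merely a convenience.
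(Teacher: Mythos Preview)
Your argument is correct and follows essentially the same route as the paper's proof: both use compactness of $Tx$ to write $T$ as a compact set times the stabilizer, invoke the finiteness of subgroups of $x$ of bounded covolume (your $\cF$, the paper's $\crly{S}$), extract a uniform lower bound on the nonzero Pl\"ucker coefficients over this finite set, and then convert character bounds into a distance bound to $\bigcap_i A_{\Lam_i}$ via the fact that only finitely many such intersections occur. The only organizational difference is that the paper first establishes the containment \eqref{2052} for collections $\{\Lam_i\}$ already lying in $\crly{S}$ (its equation (2053)) and then translates a general collection into $\crly{S}$ by a single $b\in\mathrm{Stab}_T(x)$, whereas you keep the original $\{\Lam_i\}$ and compare two stabilizer elements $\gamma,\gamma'\in\cG$, cancelling the unknown coefficients $c_J^{\Lam_i}$ in the ratio $\chi_J(\gamma^{-1}\gamma')=c_J^{\gamma'\Lam_i}/c_J^{\gamma\Lam_i}$; these are two packagings of the same idea.
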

\begin{proof}
We will identify $A$ with its Lie algebra $\mathfrak{a}$ via the
exponential map, and think of the subgroups $A_\Lam$ as
subspaces. 
By \equ{1636} only finitely many subspaces arise as $A_\Lam$.
In particular, given a collection of discrete subgroups
$\set{\Lam_i}$, the angles between the spaces they span (if nonzero) are bounded
below. Therefore
there exists a function $\psi:\bR\to\bR$ with $\psi(R)\to_{R\to\infty}\infty$, such that   
\begin{align}\label{336}
&\set{a\in A: \forall\; J\in\cup_i \on{supp}(w_{\Lam_i}),\; \psi(R)^{-1}\le \chi_J(a)\le \psi(R)}\subset\\
\nonumber &\set{a\in A: d(a,\cap_iA_{\Lam_i})\le R}.
\end{align}

Since $Tx$ is compact,  there exists a compact subset
$\Om\subset T$ 
such that for any $a\in T$ there exists $b = b(a) \in T$ satisfying $bx=x$ and $b^{-1}a\in\Om$.
It follows that there exists $M \geq 1$ such that:
\begin{enumerate}[(I)]
\item\label{1708} for any subspace $L$, $||bw_L||\le M||aw_L||$. 
\item\label{1747} for any multi-index $J$, $\chi_J(ba^{-1})\le M$.
\end{enumerate}

Given $C>0$, let $C' \df MC$ and consider the finite set 
$$\crly{S}\defi \set{\Lam \subset x: ||w_{\Lam}||\le C'}.$$
For any $\Lam\in\crly{S}$ write 
$w_{\Lam}=\sum_{J\in\on{supp}(w_{\Lam})} \al_J(\Lam) e_J.$ 
Let $\vre>0$ be small enough so that
\begin{align*}
\vre&<\min\set{\av{\al_J(\Lam)}:\Lam\in\crly{S}, J\in\on{supp}(w_{\Lam})},
\end{align*}
and choose $R$ large enough so that $\psi(R)>C'/\vre$. We claim that 
for any $\set{\Lam_i}\subset \crly{S}$,
\begin{equation}\label{2053}
\set{a\in T:\forall i\; \norm{aw_{\Lam_i}}\le C}\subset \set{a\in T: d(a, \cap_i A_{\Lam_i})\le R}.
\end{equation}
To prove this claim, suppose $a$ is an element on the left hand side
of~\eqref{2053}. 
By~\eqref{336}
it is enough show that for any $J\in\cup_i\on{supp}(\Lam_i)$ we have
$\psi(R)^{-1}\le \chi_J(a)\le\psi(R)$.  
Since the coefficient of $e_J$ in the
expansion of $aw_{\Lam_i}$ is $\chi_J(a)\al_J(\Lam_i)$ and since 
$||aw_{\Lam_i}||\le C$, we have 
$$\chi_J(a)\le
\frac{C}{|\al_J(\Lam_i)|}\le\frac{C}{\vre}\le \psi(R).$$ 
On the other hand, letting $b = b(a)$ 
we have $b\Lam_i\in\crly{S}$ from \eqref{1708}, and 
\begin{align*}
 \vre\le |\al_J(b\Lam_i)| & =\chi_J(b) |\al_J(\Lam_i)| \ \Longrightarrow \ \chi_J(b^{-1})\le C/\vre \\
&\overset{\textrm{\eqref{1747}}}{\Longrightarrow} \ \chi_J(a^{-1})=\chi_J(a^{-1}b)\chi_J(b^{-1})\le C'/\vre\le\psi(R),
\end{align*}
which completes the proof of \eqref{2053}.

Let $\set{\Lam_i}$ be any collection of subgroups of $x$
and assume that the set on the left hand side of ~\eqref{2052} is non-empty. That is, there 
exists $a_0\in T$ such that for all $i$, $||a_0w_{\Lam_i}||\le C$. 
Let $b = b(a_0)\in T$, and set $\Lam'_i \df b\Lam_i$. It follows that $\set{\Lam'_i}\subset\crly{S}$ 
and so 
\begin{align*}
\set{a\in T:\forall i\norm{aw_{\Lam_i}}\le C} 
&=b\set{a\in T: \forall i \norm{aw_{\Lam'_i}}\le C}\\
&\stackrel{\eqref{2053}}{\subset} b \set{a\in T: d(a,\cap_i A_{\Lam_i'})\le R}\\
&= \set{a\in T: d(a, b\pa{\cap_i A_{\Lam_i}})\le R},
\end{align*}
where in the last equality we used the fact that
$A_{\Lam_i'}=A_{\Lam_i}$ because $A$ is commutative.
\end{proof}

\begin{lemma}\Name{lem: flag}
Let $\crly{F}$ be a flag as in \eqref{flag} and let
$A_{\crly{F}}$ be its stabilizer. Then $A_{\crly{F}}$ is of
co-dimension  
$\ge k$ in $A$.
\end{lemma}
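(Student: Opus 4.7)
The plan is to exploit the description of $A_L$ as the intersection of kernels of the rational characters $\chi_J$ with $J \in \supp(L)$, and then for each subspace $L_i$ in the flag select one \emph{canonical} index set $J_i \in \supp(L_i)$ in such a way that the resulting $k$ characters cut out a codimension-$k$ subgroup of $A$.

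I would begin by identifying $A$ with its Lie algebra $\goa = \{x \in \R^n : \sum x_i = 0\}$ via the exponential map, so that $\dim A = n-1$ and for any $J \subset \{1, \ldots, n\}$ the character $\chi_J$ corresponds to the linear form $x \mapsto \sum_{j \in J} x_j$, with the only relation among these coming from $\chi_{\{1, \ldots, n\}} \equiv 1$. By \eqref{1636}, $A_{\crly{F}} \subset \bigcap_{i=1}^k \ker \chi_{J_i}$ for any choice of $J_i \in \supp(L_i)$, so it suffices to produce such choices for which the corresponding linear forms on $\goa$ are linearly independent.

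For each $i$ I would pick the \emph{pivot set} $P_i \subset \{1,\ldots,n\}$ of $L_i$, defined by letting $j \in P_i$ precisely when
\[
\dim\bigl(L_i \cap \spa(e_1, \ldots, e_j)\bigr) > \dim\bigl(L_i \cap \spa(e_1, \ldots, e_{j-1})\bigr).
\]
A standard echelon-form argument shows $|P_i| = \dim L_i = d_i$ and $P_i \in \supp(L_i)$. Crucially, since $L_1 \subsetneq \cdots \subsetneq L_k$, the corresponding filtrations by coordinate subspaces are nested, so $P_1 \subsetneq P_2 \subsetneq \cdots \subsetneq P_k$, and since $L_k \subsetneq \R^n$ there exists $p_0 \in \{1, \ldots, n\} \sm P_k$.

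To finish, I would verify that $\chi_{P_1}|_A, \ldots, \chi_{P_k}|_A$ are linearly independent. Suppose $\sum_{i=1}^k c_i \mathbf{1}_{P_i} = c_0 \mathbf{1}_{\{1, \ldots, n\}}$ as vectors in $\R^n$ (the hypothetical dependence modulo the trivial character). Evaluating at the coordinate $p_0$ gives $c_0 = 0$. Then picking $p_i \in P_i \sm P_{i-1}$ (with $P_0 \df \varnothing$), the inclusion $P_1 \subsetneq \cdots \subsetneq P_k$ yields $\mathbf{1}_{P_j}(p_i) = 1$ exactly when $j \geq i$; evaluating the relation at $p_k, p_{k-1}, \ldots, p_1$ in turn forces $c_k = c_{k-1} = \cdots = c_1 = 0$ by back-substitution. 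Hence the common kernel of the $\chi_{P_i}$ has codimension exactly $k$ in $A$, and $A_{\crly{F}}$ has codimension at least $k$ as required. The only non-routine ingredient here is the nesting $P_1 \subsetneq \cdots \subsetneq P_k$, which I expect to be the main point to check carefully, but it follows immediately from the monotonicity of the dimension function along the coordinate filtration.
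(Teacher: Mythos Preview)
Your proof is correct and follows the same overall strategy as the paper: reduce via \eqref{1636} to exhibiting a nested chain $J_1 \subsetneq \cdots \subsetneq J_k$ with $J_i \in \supp(L_i)$, and then observe that the characters $\chi_{J_i}$ associated to a strictly nested chain are independent on $A$. The difference lies in how the chain is produced. The paper extends $\crly{F}$ to a complete flag, takes an adapted basis $v_1,\dots,v_n$, and builds the $J_i$ top-down by Laplace expansion: starting from $J_n=\{1,\dots,n\}$, at each step it expands $\det T_{J_{d+1}}$ along the last column to find some $J_d \subset J_{d+1}$ with $\det T_{J_d}\neq 0$. Your construction instead takes the pivot sets $P_i$ of the $L_i$ with respect to the coordinate filtration; these are canonical (no choices), automatically nested because a vector in $L_i$ with last nonzero coordinate $j$ is also such a vector in $L_{i+1}$, and they lie in the supports by the echelon-form determinant. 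Your route avoids extending to a complete flag and gives uniquely determined $J_i$, at the cost of invoking the echelon/pivot formalism; the paper's route is perhaps more self-contained but involves an inductive choice at each step. Both lead to the same independence calculation, which you carry out explicitly while the paper declares it ``clear.'' One small remark: your parenthetical ``the only relation among these coming from $\chi_{\{1,\ldots,n\}}\equiv 1$'' is imprecise as stated (there are many relations among general $\chi_J$), but what you actually use---that a dependence on $\goa$ lifts to $\sum c_i\mathbf{1}_{P_i}=c_0\mathbf{1}$ on $\R^n$---is correct, since the annihilator of $\goa$ is spanned by $\mathbf{1}$.
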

\begin{proof}
Given a nested sequence of multi-indices
$J_1\varsubsetneq\dots\varsubsetneq J_k$ it is clear that the subgroup
$$
\bigcap_{i=1}^k \ker \chi_{J_i}
$$
is of co-dimension $k$ in $A$. In light of \eqref{1636},
it suffices to prove the following claim: 
\quad\\
\noindent \textit{Let $\crly{F}$ be a flag as
  in~\eqref{flag} with $d_i\defi \dim L_i$. Then there is a nested
  sequence 
of multi-indices $J_i\in\tb{I}^n_{d_i}$ such that $J_i\in\on{supp}(L_i)$.}
\quad\\


In proving the claim we will assume with no loss of generality that
the flag is complete. Let $v_1,\dots, v_n$ be a basis of $\bR^n$ such
that  $L_i=\on{span}\set{v_j}_{j=1}^i$ for 
$i=1,\dots, n-1.$  
Let $T$ be the $n\times n$ matrix whose columns are $v_1, \dots, v_n$.
Given a multi-index $J$ of length $\av{J}$, we denote by $T_J$
the square matrix of dimension $\av{J}$ obtained  
from $T$ by deleting the last $n-\av{J}$ columns and the rows
corresponding to the indices not in $J$.  Note that with this
notation, possibly after replacing some of the $v_i$'s by their scalar
multiples, each $w_{L_d}$ is the image in
$\cE_{d,n}$ of 
\begin{equation}\label{1159}
v_1\wedge\dots\wedge v_d=\sum_{J\in \tb{I}^n_d}  (\det T_J) e_J.
\end{equation}
In particular,  $J\in\on{supp}(L_d)$ if and only if $\det T_J\neq 0$.

Proceeding inductively in reverse, we construct the nested sequence
$J_d$ by induction on $d =n, \ldots, 1$. Let $J_n=\set{1,\dots, n}$ so that
$T=T_{J_n}$.  
Suppose we are given  multi-indices
$J_{n}\supset\dots\supset J_{d+1}$ such that  
$J_i\in\on{supp}(w_{L_i})$ 
for $i=n,\dots, d+1$. We want to define now a multi index
$J_d\in\on{supp}(w_{L_d})$ which is  
contained in $J_{d+1}$. By~\eqref{1159}, $\det T_{J_{d+1}}\neq 0$. When
computing $\det T_{J_{d+1}}$ by expanding the last column we express
$\det T_{j_{d+1}}$ 
as a linear combination of $\set{\det T_J:J\subset J_{d+1},
  \av{J}=d}$. We
conclude that there must exist at least one multi-index $J_d\subset
J_{d+1}$  for which $\det T_{J_d}\ne 0$. In turn, by~\eqref{1159} 
this means that $J_d\in\on{supp}(w_{L_d})$. This finishes the proof
of the claim. 
\end{proof}

The following notation is analogous to Definition \ref{bn}.
Given a lattice $x\in \Xn$ and $\del>0$ let 
\begin{align*}
\on{Min}^*_{\del}(x)&\defi\set{v \in x \sm \{0\} : \|v\|<(1+\del)\al_1(x)}.\\
\tb{V}^*_{\del}(x)&\defi\on{span}\on{Min}^*_{\del}(x).\\
\dim^*_\del(x)&\defi\dim\tb{V}^*_{\del}(x).
\end{align*}
Finally, for $\vre>0$, let $\cU^{(\vre)}=\set{U_j^{(\vre)}}_{j=1}^n$
be the collection of open subsets of $A$ defined by  
\eq{eq: cover}{ U_j = U_j^{(\vre)} \df \{a \in A: \text{for all }
  \delta \text{ in a neighborhood of }
j\vre, \, \dim^*_{\del} (ax) =j \}. }

Similarly to the discussion in Lemma \ref{lem: positive inradius} we see that 
$\cU^{(\vre)}$ is an open cover of $A$.
\begin{proof}[Proof of Theorem \ref{thm: closed orbits}]  
The strategy of proof is very similar to that of Theorem~\ref{thm: main}.  We consider
covers $\cU^{(\varepsilon)}$ of $A$  and use Theorem~\ref{thm: covering}
to deduce that $U_n^{(\varepsilon)}$ is non-empty. 
The first step towards applying Theorem~\ref{thm: covering} is to find a decomposition
 $A\simeq\bR^{n-1}=\bR^s\times\bR^t$ 
and a simplex $\Del\subset \bR^s$, so that the restriction of the cover to
$\Del\times\bR^t$ satisfies the two hypotheses of Theorem~\ref{thm: covering}.

According to \cite{TW, gruber}, there is a
decomposition $A = T_1
\times T_2$ and a direct sum decomposition $\R^n = \bigoplus_1^{d} V_i$
such that the following hold:
\begin{itemize}
\item
Each $V_i$ is spanned by some of the standard basis vectors. 
\item
$T_1$ is the group of linear transformations 
which act on each $V_i$ by a homothety, preserving Lebesgue measure on
$\R^n$. In particular $s \df \dim T_1 = d-1$. 
\item
$T_2$  is the group of diagonal (with respect to the standard basis)
matrices whose restriction to each $V_i$ has determinant 1.  
\item $T_2 x$ is compact and $T_1 x$ is divergent;
  i.e. $Ax \cong T_1 \times T_2/(T_2)_x$, where
  $(T_2)_x \df \{a \in T_2: ax= x\}$. 
\item 
Setting $\Lambda_i \df V_i \cap x$, each $\Lambda_i$ is a
lattice in $V_i$, so that $\bigoplus \Lambda_i$ is of finite index in
$x$. 
\end{itemize}

For $a \in T_1$ we write $\chi_i(a)$ for the number
satisfying $av = e^{\chi_i(a)}v$ for all $v \in V_i$. Thus each $\chi_i$
is a homomorphism from $T_1$ to the additive group of real
numbers. The mapping $a \mapsto \bigoplus_i \chi_i(a)
\mathrm{Id}_{V_i}$, where $\mathrm{Id}_{V_i}$ is the identity map on
$V_i$,  is nothing but the logarithmic map of $T_1$ and it endows
$T_1$ with the structure of a vector space. In particular we can discuss the
convex hull of subsets of $T_1$. 
For each $\rho$ we let
$$\Delta_\rho \df \{a \in T_1: \max_i \chi_i(a) \leq \rho\}.$$  
Then
$\Delta_\rho = \conv (b_1, \ldots, b_d)$ where $b_i$ is the diagonal
matrix acting on each $V_j, j \neq i$ by multiplication by $e^\rho$, and
contracting $V_i$ by the appropriate constant ensuring that $\det b_i
=1$. 

Let $P_i : \R^n \to 
V_i$ be the natural projection associated with the decomposition $\R^n =
\bigoplus V_i$. Each $P_i(x)$ is of finite index in $\Lambda_i$ and
hence discrete in $V_i$. Moreover, the orbit $T_2 x$ is compact, so
for each $a \in T_2$ there is $a'$ belonging to a bounded subset of $T_2$
such that $ax=a'x$. This implies that there is
$\eta>0 $ such that for any $i$ and any $a \in T_2$, if $v \in ax$ and $P_i(v) \neq 0$
then $\| P_i(v)\| \geq \eta$. Let $C>0$ be large enough so that
$\alpha_1(x') \leq C$ for any $x' \in \Xn$.  Let
$\rho$ be large enough so that 
\eq{eq: choice of R}{e^\rho\eta > 
2C. }
We restrict the covers $\mathcal{U}^{(\vre)}$
(where 
$\vre \in (0,1/n)$) to $\Delta_\rho \times
T_2$ and apply Theorem \ref{thm: covering} with $t \df \dim T_2 = n-d$. 
 If we show that the 
  hypotheses of Theorem \ref{thm: covering} are 
  satisfied for each
  cover $\mathcal{U}^{(\vre)},$ we will obtain $U_n^{(\vre)} \neq \varnothing.$ Then, taking $\vre_j
  \to 0$ and applying a
  compactness argument, we find a well-rounded lattice in 
  $(\Delta_\rho \times T_2)x$. 

 Let   $U$ be a connected  subset of 
  $U_k^{(\vre)} \in \cU^{(\vre)}$. Repeating the arguments proving Lemma
  \ref{flat things}, or
  appealing to \cite[\S7]{McMullenMinkowski}, we see that the
  $k$-dimensional subspace 
  $L\defi a^{-1}\tb{V}^*_{k\vre} (ax)$ 
  as well as the discrete subgroup $\Lam\defi L\cap x$
  are independent of the choice of $a \in U$. 
  By definition of $U_k^{(\vre)}$, for any $a \in U$, $a \Lam$ contains $k$ vectors $v_i = v_i(a),
  i=1, \ldots,
  k$ which span $aL$
  and satisfy \eq{eq: vi satisfy}{
\|v_i\| \in [r, (1+k\vre)r ], \ \ \text{where \ } r \df  \alpha_1(ax).
 } 
 
In order to 
verify hypothesis~\eqref{09301} of Theorem \ref{thm:
    covering}, we need to show that there 
is at least one $j$ for which $U \cap M_j  = 
\varnothing$. Let $P_1, \ldots, P_d$ be the projections above. Since 
 $\ker P_1 \cap \cdots \cap \ker P_d = \{0\}$ and $\dim L = k \geq 1$, it suffices to show that
 whenever $U \cap M_j \neq \varnothing$, $L \subset \ker P_j$. 
The face $F_j$ of $\Delta_\rho$ consists of those elements $a_1 \in T_1$
which expand vectors in $V_j$ by a factor of 
$e^\rho$. If $U \cap M_j \neq \varnothing$ then there is $a \in T_2, a_1
\in F_j$ so that $a_1a \in U$. Now \equ{eq: choice of R}, \equ{eq: vi
  satisfy} and the choice of $\eta$ and $C$  ensure that 
the vectors $v_i = v_i(a_1a)$ satisfy $P_j(v_i)=0$. Therefore $L \subset
\ker P_j$.  

It remains to 
verify hypothesis~\eqref{09302} of Theorem \ref{thm: covering}. 
Let $U$ be a connected subset of an intersection $U_{i_1}\cap\dots\cap
U_{i_k}\cap(\Del_\rho\times T_2)$ and let  
$L_{i_j}\defi a^{-1}\tb{V}^*_{i_j\vre} (ax)$ and $\Lam_{i_j}\defi L_{i_j}\cap x$. 
As remarked above, $L_{i_j},\Lam_{i_j}$ are independent of $a\in U$.

By the definition of the $L_{i_j}$'s we have that $L_{i_j}\varsubsetneq L_{i_{j+1}}$ and so they form 
a flag $\crly{F}$ as in~\eqref{flag}. Lemma~\ref{lem: flag} applies and we deduce that 
\begin{equation}\label{1640}
A_{\crly{F}}=\cap_{j=1}^k A_{L_{i_j}} \textrm{ is of co-dimension}\ge k\textrm{ in }A.
\end{equation} 
For each $a\in U$ and each $j$ let $\set{v^{(j)}_\ell(a)}\in a\Lam_{i_j}$ be the vectors spanning 
$aL_{i_j}$ which satisfy~\eqref{eq: vi satisfy}. Let 
$u^{(j)}_\ell(a)\defi a^{-1} v^{(j)}_\ell\in\Lam_{i_j}$. Observe that:
\begin{enumerate}[(a)]
\item\label{02281} $\on{span}_{\bZ}\set{u^{(j)}_\ell(a)}$ is of finite
  index in $\Lam_{i_j}$ and in particular, 
$u^{(j)}_1(a)\wedge\dots\wedge u^{(j)}_{i_j}(a)$ is an integer
multiple of $\pm w_{\Lam_{i_j}}$. As a consequence $||aw_{\Lam_{i_j}}||\le
||v^{(j)}_1(a)\wedge\dots\wedge v^{(j)}_{i_j}(a)||$. 
\item\label{02282} Because of~\eqref{eq: vi satisfy} we have that $ ||v^{(j)}_1(a)\wedge\dots\wedge v^{(j)}_{i_j}(a)||< C$ for some constant depending on $n$ alone.
\end{enumerate}
It follows from~\eqref{02281},\eqref{02282} and Lemma~\ref{bdd dist from stab}
that there exist $R>0$ and an element $b\in T_2$ so that
$$U\subset \Del_\rho \times \set{a\in T_2:\forall i_j,
  ||aw_{\Lam_{i_j}}||<C}\subset T_1\times \set{a\in T_2:
  d(a,bA_{\crly{F}})\le R}.$$ 
By~\eqref{1640} we deduce that 
if $p_2 : A \to T_2$ is the projection 
associated with the
decomposition $A = T_1 \times T_2$ then $p_2(U)$ is $(R',s+t-k)$-almost afine, where $R'$ depends only on $R,\rho$. This concludes the proof. 
\end{proof}

\appendix
\section{Proof of Theorem \ref{thm: covering}}
\Name{appendix: Levin}
Below $X$ will denote a second countable metric space.
We will use calligraphic letters like $\mathcal{U}$ for collections of
sets. The symbol 
 $\mesh ( \mathcal {A})$ will 
 denote the supremum of the diameters of the sets in
 $\mathcal A$.
The symbol $\Lb (\mathcal{A})$ will denote
 the Lebesgue number of a cover $\mathcal A$, i.e. the supremum of all
 numbers $r$ such that each ball of radius $r$ in $X$ is contained in
 some element of $\cA$. The symbol
 $\order(\mathcal{A})$ will denote the  
 largest number of distinct elements of $\mathcal A$ with non-empty
 intersection. 
\begin{definition}\Name{def: asdim}
 A collection $\{X_j\}_{j \in \crly{J}}$ of subsets of $X$ is said to be {\em uniformly of 
 asymptotic dimension $\leq n$}  if for every $r>0$ 
 there is $R >0$ such that for every $j \in \crly{J}$ there is 
  an open cover ${\mathcal X}_j $  of $X_j$ such that
\begin{itemize}
\item  $\mesh ( {\mathcal X}_j) \leq R$.
\item $\Lb ({\mathcal X }_j)> r$. 
\item$\order( {\mathcal X}_j)\leq n+1$. 
\end{itemize}

As an abbreviation we will sometimes write `asdim' in place of `asymptotic
dimension'. 
\end{definition}

Recall that a cover of $X$ is {\em locally finite} if every $x \in X$
has a neighborhood which intersects finitely many sets in the cover. 
We call the intersection of $k$ distinct elements of $\cA$ a {\em
  $k$-intersection,} and denote the union of all $k$-intersections by 
 $[{\mathcal A}]^{k}$. We will need the following two Propositions for
 the proof of Theorem~\ref{thm: covering}. We first prove  
 Theorem~\ref{thm: covering} assuming them and then turn to their proof.

\begin{proposition}
\Name{p2}
Let $\mathcal A$ be a locally finite open cover of a space $X$
such that $\order ({\mathcal A}) \leq m$ and the collection of components of
the  $k$-intersections 
of 
$\mathcal A$, $1 \leq k \leq m$,  is uniformly  of $\asdim \leq m-k$.
Then $\mathcal A$ can be refined by a uniformly bounded 
open cover of order at most  $m$.
\end{proposition}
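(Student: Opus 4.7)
My plan is to prove Proposition~\ref{p2} by downward induction on $k$, constructing at stage $k$ (for $k=m,m-1,\ldots,1$) a locally finite, uniformly bounded open refinement $\cA^{(k)}$ of $\cA$ which covers an open neighborhood $\Omega_k$ of $[\cA]^k$ and has order at most $m$. Since $[\cA]^1=X$ (as $\cA$ covers $X$), the cover $\cA^{(1)}$ produced at the last stage will be the desired refinement. I will choose the uniform-boundedness constants $R_k$ and corresponding Lebesgue numbers $r_k$ so that $r_k$ grows much faster than $R_{k+1}$; this will be what makes the transition regions behave.

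Base case $k=m$: the components $\{C_\alpha\}$ of $[\cA]^m$ are uniformly of $\asdim 0$ and therefore, being connected, are uniformly bounded. For each $\alpha$ choose the (unique) $m$-intersection $A_{i_1}\cap\cdots\cap A_{i_m}$ containing $C_\alpha$ and use the local finiteness of $\cA$ to select a uniformly bounded open set $U_\alpha$ with $C_\alpha\subset U_\alpha\subset A_{i_1}\cap\cdots\cap A_{i_m}$, arranging so that $\cA^{(m)}\df\{U_\alpha\}$ is itself locally finite. This $\cA^{(m)}$ refines $\cA$, has order at most $m$, and covers an open neighborhood $\Omega_m$ of $[\cA]^m$.

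Inductive step: suppose $\cA^{(k+1)}$ has been built with the above properties and uniform bound $R_{k+1}$. Choose an open set $\Omega_{k+1}''$ with $\overline{\Omega_{k+1}''}\subset\Omega_{k+1}$ so that $\Omega_{k+1}''$ still covers $[\cA]^{k+1}$. Apply the uniform-$\asdim\le m-k$ hypothesis to the components of $[\cA]^k$ with a Lebesgue-number parameter $r_k$ much larger than $R_{k+1}$ and the width of the collar $\Omega_{k+1}\setminus\Omega_{k+1}''$, obtaining for each component $C$ an open cover $\cX_C$ of $C$ of mesh $\le R_k$, Lebesgue number $>r_k$, and order $\le m-k+1$. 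Each $U\in\cX_C$ is contained in the unique $k$-intersection hosting $C$, hence refines $\cA$. Set $U'\df U\setminus\overline{\Omega_{k+1}''}$ and let
\[
\cA^{(k)}\df\cA^{(k+1)}\cup\{U'\colon U\in\cX_C,\ C\text{ a component of }[\cA]^k\}.
\]
This is a locally finite, uniformly bounded open family (bound $\max(R_{k+1},R_k)$) refining $\cA$; it covers an open neighborhood $\Omega_k$ of $[\cA]^k$, namely $\Omega_{k+1}\cup\bigcup_{C,U} U'$.

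The main obstacle, and the heart of the argument, is verifying that $\order(\cA^{(k)})\le m$. The set is partitioned into three regimes. On $\Omega_{k+1}''$ only the old sets of $\cA^{(k+1)}$ can appear, so the order is at most $m$ by the inductive hypothesis. On $X\setminus\overline{\Omega_{k+1}}$ only the new sets appear, and at any such point the number of new sets containing it comes from a single $\cX_C$ (for the unique component $C\ni x$) so is bounded by $m-k+1\le m$. In the collar $\Omega_{k+1}\setminus\overline{\Omega_{k+1}''}$, a point $x$ lies in some $k$-intersection, hence in a component $C$ with $x\in C\cap\Omega_{k+1}$; because $\cX_C$ has Lebesgue number $>r_k\gg R_{k+1}$, a single $U_0\in\cX_C$ contains the $r_k$-ball around $x$, and this ball also contains every set of $\cA^{(k+1)}$ through $x$ (each has diameter $\le R_{k+1}$); so the only new sets through $x$ are $\le m-k+1$, while the only old sets through $x$ must be among those contained in $U_0$ and, by refinement, each old set lies in some element of $\cA$ which in turn meets $C\subset A_{i_1}\cap\cdots\cap A_{i_k}$. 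Tightening the collar and $r_k$ further, and exploiting that $x\in\Omega_{k+1}$ forces $x\in[\cA]^{k+1}$ (after shrinking $\Omega_{k+1}$), shows that at such $x$ the old-set count is at most $k$ less than $m$, giving a total of at most $m$. The delicate part of the proof is precisely this bookkeeping in the collar, which is controlled by choosing the sequence $r_m\gg R_m\gg r_{m-1}\gg R_{m-1}\gg\cdots$ with sufficient gap at each stage.
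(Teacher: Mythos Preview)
Your inductive scheme has the right shape, but the order bound in the collar region is a genuine gap, not just bookkeeping. You carry the hypothesis $\order(\cA^{(k+1)})\le m$, and then assert that in the collar ``the old-set count is at most $k$ less than $m$''; nothing in your induction gives this. A point $x$ in the collar lies outside $\overline{\Omega_{k+1}''}\supset[\cA]^{k+1}$, so $x$ is in at most $k$ elements of $\cA$; but since distinct old sets may refine the \emph{same} element of $\cA$, this does not bound the number of old sets through $x$ by anything better than $m$. Your sentence ``$x\in\Omega_{k+1}$ forces $x\in[\cA]^{k+1}$ (after shrinking $\Omega_{k+1}$)'' is in fact inconsistent with $x\notin\overline{\Omega_{k+1}''}$, and shrinking $\Omega_{k+1}$ down to $[\cA]^{k+1}$ makes the collar empty only if $[\cA]^{k+1}$ is closed, which it need not be. So in the collar you genuinely have up to $m$ old sets and up to $m-k+1$ new sets meeting at $x$, and no mechanism to prevent this; choosing $r_k\gg R_{k+1}$ does not help, because containment of the old sets in a single $U_0$ says nothing about their multiplicity.

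The paper avoids this by carrying a \emph{stronger} inductive invariant, $\order(\cA^{k+1})\le m-k$, and by merging the two covers not by naive union-minus-a-collar but via a nerve argument (Lemma~\ref{p1}, used through Lemma~\ref{lemma0036}): one pushes the overlap region to $\nerve(\cD)\times[0,1]$ by a canonical map and pulls back a cover of that $(\order(\cD))$-dimensional complex, which is what keeps the order from jumping by more than one. If you want to salvage your approach, you should at minimum strengthen your inductive hypothesis to $\order(\cA^{(k+1)})\le m-k$; but even then a straight union across a collar adds the orders rather than taking a max-plus-one, so some version of the nerve trick (or an equivalent device) is needed in the transition region.
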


 \begin{proposition}
 \Name{p3}
 Let $\Delta_1$ and $\Delta_2$ be simplices, $X=\Delta_1 \times \Delta_2$,
 $p_i : X \to \Delta_i$ the projections and 
  $\mathcal A$  a  finite open cover of $X$ such that
  for every $A \in \mathcal A$ and $i=1,2$ the set $p_i(A)$ does 
  not meet at least one of the faces of $\Delta_i$.
  Then $\order ({\mathcal A}) \geq \dim \Delta_1 + \dim \Delta_2 +1 $.
  \end{proposition}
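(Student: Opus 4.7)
The plan is to generalize the classical proof of the Lebesgue covering theorem for a single simplex (a Sperner-type labeling followed by a degree argument) to a product of two simplices. Set $n_i \df \dim \Delta_i$ and $n \df n_1 + n_2 = \dim X$, write $v^{(i)}_0, \ldots, v^{(i)}_{n_i}$ for the vertices of $\Delta_i$, and let $F^{(i)}_j$ denote the face of $\Delta_i$ opposite $v^{(i)}_j$. For each $A \in \cA$ and each $i \in \{1,2\}$, the hypothesis supplies an index $j_i(A) \in \{0, \ldots, n_i\}$ with $p_i(A) \cap F^{(i)}_{j_i(A)} = \varnothing$, equivalently $A \cap p_i^{-1}(F^{(i)}_{j_i(A)}) = \varnothing$.

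Next I would fix a partition of unity $\{\phi_A\}_{A \in \cA}$ subordinate to $\cA$ and define a map $f = (f_1, f_2) : X \to X$ by
\[
f_i(x) \df \sum_{A \in \cA} \phi_A(x)\, v^{(i)}_{j_i(A)}, \qquad i = 1, 2.
\]
The crucial property to check is face preservation: $f\bigl(p_i^{-1}(F^{(i)}_j)\bigr) \subseteq p_i^{-1}(F^{(i)}_j)$ for all $i, j$. Indeed, if $p_i(x) \in F^{(i)}_j$ and $\phi_A(x) > 0$, then $x \in A$ forces $p_i(x) \notin F^{(i)}_{j_i(A)}$; combined with $p_i(x) \in F^{(i)}_j$, this gives $j_i(A) \neq j$, so $v^{(i)}_{j_i(A)}$ is a vertex of $F^{(i)}_j$ and therefore $f_i(x) \in F^{(i)}_j$. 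Each such face being convex, the straight-line homotopy $(1-t)\on{id} + tf$ restricts to a homotopy inside $\partial X$ from $\on{id}|_{\partial X}$ to $f|_{\partial X}$. In particular $f|_{\partial X}$ is essential (homotopic to the identity, hence of degree one) as a self-map of $\partial X \cong S^{n-1}$.

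I would then argue by contradiction: assume $\on{order}(\cA) \leq n$. The map $\Phi(x) \df (\phi_A(x))_{A \in \cA}$ sends $X$ into the geometric realization of the nerve of $\cA$, a simplicial complex of dimension at most $n - 1$; since $f$ factors through $\Phi$, the image of $f$ has covering dimension at most $n - 1$ and in particular misses some interior point $y$ of $X$. But then $f$ factors through the deformation retract $X \sm \{y\} \simeq \partial X$, which extends $f|_{\partial X}$ to a map $X \to \partial X$, forcing $f|_{\partial X}$ to be nullhomotopic and contradicting the previous paragraph. We conclude $\on{order}(\cA) \geq n + 1$, as required.

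The main subtle point is the degree-theoretic step: one must carefully verify that the straight-line homotopy between $f$ and the identity actually remains inside each face of $X$ (so that it defines a homotopy \emph{within} $\partial X$, not merely in $X$), and then convert the dimension-theoretic non-surjectivity into a nullhomotopy contradiction via the retraction of $X \sm \{y\}$ onto $\partial X$. Once these topological points are secured, the combinatorial labeling $A \mapsto (j_1(A), j_2(A))$ and the standard partition-of-unity construction of $f$ make the remainder essentially mechanical.
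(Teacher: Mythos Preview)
Your argument is essentially identical to the paper's: both choose for each $A$ a vertex in each factor whose opposite face misses $p_i(A)$, build the map $f$ as the composition of a canonical nerve map with the affine map sending the nerve vertex for $A$ to the chosen pair of vertices, verify that $f$ preserves faces (hence $f|_{\partial X}\simeq \mathrm{id}$), and then derive a contradiction from $\dim\nerve(\cA)\le n-1$ via a retraction of $X\smallsetminus\{y\}$ onto $\partial X$. Your write-up is in fact somewhat more explicit than the paper's about the straight-line homotopy and the face-preservation check.
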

 

 \begin{proof}[Proof of Theorem \ref{thm: covering}.] 
Let $m \df \dim M = s+t$, and suppose by contradiction that 
 $\order ({\mathcal U}) \leq  m$. Since every cover of $M$ has a
 locally finite refinement, there is no loss of generality in assuming
 that $\cU$ is locally finite. Replacing $\cU$
   with the set of connected components of elements of $\cU$, we may 
   assume that all elements of $\cU$ are connected. For any $r_0$, and
   any bounded set $Y$, the
   product space  $Y \times \R^d$ can be covered by a cover of order $d+1$ and
   Lebesgue number greater than $r_0$. Hence our hypothesis (ii) implies that for each
 $k =1, \ldots, m$, the collection of connected components of intersections of $k$
 distinct elements of $\cU$ is
 uniformly of
 asymptotic dimension at most $m-k$. 
Therefore we can apply Proposition
 \ref{p2} 
to assume that  $\mathcal U$ 
  is  uniformly bounded and of order at most $m$. 
  Take a sufficiently large $t$-dimensional simplex  $\Delta_1 \subset \R^t$ so that 
   the projection of every set  in $\mathcal U$  does not
  intersect at least one of the faces of $\Delta_1$. 
We obtain a contradiction to 
   Proposition \ref{p3}.
\end{proof}
For the proofs of Propositions \ref{p2}, \ref{p3} we will need some auxiliary
lemmas. 
\begin{lemma}\Name{lem: for p2}
Let $\{G_i: i \in \cI\}$ be a locally finite collection of open subsets
of a metric space $X$,
and let $Z$ be an open subset such that for each $i \neq j$, $G_i
\cap G_j \subset Z$. Then there are disjoint open subsets
$\tb{E}_i, i \in \crly{I},$ such that for any $i$
\eq{eq: inclusion}{G_i \sm Z \subset \tb{E}_i
\subset G_i.}   
\end{lemma}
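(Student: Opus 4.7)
The plan is to construct the $\tb{E}_i$ explicitly using distance functions, exploiting the crucial consequence of the hypothesis that outside of $Z$ the sets $G_i$ behave as if pairwise disjoint.

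First I would record the following consequence of the hypothesis $G_i \cap G_j \subset Z$ (for $i \neq j$): with $F_i \df G_i \sm Z$, one has
$$F_i \cap G_j \;=\; (G_i \cap G_j) \sm Z \;\subset\; Z \sm Z \;=\; \varnothing \quad \text{for all } j \neq i.$$
In particular the $F_i$ are pairwise disjoint, and every point of $F_i$ lies in $G_i$ and in no other $G_j$. This is the only place the hypothesis on $Z$ is used.

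Next, using that $X$ is a metric space, I would define continuous functions $\phi_i : X \to [0,\infty)$ by $\phi_i(x) \df d(x, X \sm G_i)$; note $\phi_i(x) > 0$ iff $x \in G_i$. Set
$$\tb{E}_i \;\df\; \{x \in G_i : \phi_i(x) > \phi_j(x) \text{ for all } j \neq i\}.$$
The inclusions $F_i \subset \tb{E}_i \subset G_i$ and the pairwise disjointness would then follow almost immediately: if $x \in F_i$, then $x \in G_i$, so $\phi_i(x) > 0$, whereas by the observation above $x \notin G_j$ for $j \neq i$, so $\phi_j(x) = 0$; and if some $x$ lay in $\tb{E}_i \cap \tb{E}_j$ with $i \neq j$, we would get the contradiction $\phi_i(x) > \phi_j(x) > \phi_i(x)$.

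The one delicate step, and the main obstacle, is verifying that each $\tb{E}_i$ is open when $\cI$ is infinite; here local finiteness of $\{G_j\}$ is essential. Given $x_0 \in \tb{E}_i$, I would choose a neighborhood $U$ of $x_0$ meeting only finitely many of the $G_j$; for the remaining $j$, $\phi_j \equiv 0$ on $U$. Thus on $U$ the defining inequality ``$\phi_i > \phi_j$ for all $j \neq i$'' reduces to finitely many strict inequalities between continuous functions, which defines an open subset of $U$; intersecting with the open set $G_i$ preserves openness. Without local finiteness the supremum $\sup_{j \neq i} \phi_j$ would only be lower semicontinuous in general, and the construction would break down, so this is precisely where the hypothesis on $\{G_i\}$ enters.
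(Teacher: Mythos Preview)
Your proof is correct and takes a genuinely different route from the paper's. The paper constructs $\tb{E}_i$ geometrically: for each boundary point $x$ of $G_i \sm Z$ it defines a radius $r_x = \frac{1}{3}\inf_{j\neq i} d\bigl(x,\partial(G_j\sm Z)\bigr)$, argues (using local finiteness and the hypothesis on $Z$) that this infimum is an attained positive minimum, and then sets $\tb{E}_i \df (G_i\sm Z)\cup\bigl(G_i\cap\bigcup_x B(x,r_x)\bigr)$; disjointness is checked by a triangle-inequality computation. Your construction is instead a Voronoi-type partition: $x\in\tb{E}_i$ iff $x$ lies strictly deeper in $G_i$ than in any other $G_j$. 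This makes disjointness and the inclusion $F_i\subset\tb{E}_i$ immediate, and isolates local finiteness cleanly as the sole reason for openness, whereas the paper invokes local finiteness at a different point (to see that the infimum defining $r_x$ is a minimum). Your argument is shorter and arguably more transparent; the paper's has the minor advantage of giving explicit control on the ``thickness'' of $\tb{E}_i$ around $G_i\sm Z$, though that control is not used afterward. One small quibble: you declare $\phi_i : X \to [0,\infty)$, but if some $G_i = X$ then $\phi_i\equiv+\infty$; this edge case causes no real trouble (the hypothesis then forces $G_j\subset Z$ for all $j\neq i$), but you might either note it or simply take the codomain to be $[0,\infty]$.
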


\begin{proof}
For each $G_i$ and 
$x \in G_i \cap \partial
\, (G_i \sm Z)$ 
set   
$$
r_x \df \frac{1}{3} \inf_{j \neq i} d\left(x, \partial
\, \left(G_j  \sm Z\right) \right),
$$
where $d$ is the metric on $X$. The infimum in this
definition is in fact a minimum since $\{G_i\}$ is locally
finite. To see that it
is positive, suppose if possible that  $y_\ell \to x$
for a
sequence $(y_\ell) \subset \partial ( G_j \sm Z)$. 
Then there are $\tilde{y}_\ell \in G_j \sm Z$ with $d(y_\ell, \tilde{y}_\ell) \to 0$ so that
$\tilde{y}_\ell \to x$.  Since $G_i$ is open, for large enough $\ell$ we have
$\tilde{y}_\ell \in G_i$, contradicting the assumption that $G_i \cap G_j
\subset Z$. 
Now we set 
$$
\tb{E}_i \df \tb{E}' \cup \tb{E}'', \ \ \text{where \ \ }
\tb{E}' \df G_i \sm Z \ \ \text{and \ } \tb{E}'' \df G_i \cap
\bigcup_{x \in  G_i \cap \partial
\, (G_i \sm Z)} B(x,r_x).
$$
Clearly each $\tb{E}_i$ satisfies \equ{eq: inclusion}, and it is
open since $\tb{E}''$ is open and covers the boundary points of
$\tb{E}'$. To show that the sets $\tb{E}_i$ are 
disjoint, suppose if possible that $z \in \tb{E}_i \cap
\tb{E}_j$. Then there are $x \in G_i, y \in G_j$ such that $z \in
B(x, r_x) \cap B(y, r_y)$. Supposing with no loss of generality that
$r_x \geq r_y$ we find that  
$$d(x,y) \leq d(x,z)+d(z,y) \leq 2r_x \leq \frac23 d \left(
  x, \partial \left(G_j \sm Z \right)\right),
$$
which is impossible. 
\end{proof}
We denote the nerve of a cover $\cA$ by 
  $\nerve (\mathcal{ A}),$ and consider it with the metric topology
  induced by barycentric coordinates.
  Given a partitition of unity subordinate 
  to a cover $\mathcal{A}$ of $X$, there is a standard construction of
  a map $X \to \nerve(\cA)$; such a
  map is called a {\em canonical map}. 

 \begin{lemma}
\Name{p1}
Let a space $Y$ be the union of two open subsets $\mathbf{D}$ and $\mathbf{E}$, and
let $\mathcal D$ and $\mathcal E$ be open covers of $\mathbf{D}$ and
$\mathbf{E}$ respectively, 
with bounded $\mesh$ and $\order$, and such that if $C\subset \mathbf{D}\cap\mathbf{E}$ is a connected subset 
contained in an element of $\cD$, then it is contained in an element of $\cE$.
Then, there is an open cover $\cY$ of $Y$ such that:
\begin{enumerate}
\item The cover $\mathcal{Y}$ refines
$\cD\cup \cE$.
\item $\mesh ({\mathcal Y}) \leq 
\max \left( \mesh(\cD), \mesh(\cE) \right ).$ 
\item $\order ({\mathcal Y}) \leq \max \left ( \order ({\mathcal D})+1, \order
\left({\mathcal E}\right) \right )$.
\end{enumerate}
\end{lemma}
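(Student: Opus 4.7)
The plan is to keep $\cE$ essentially intact on a ``deep'' part of $\mathbf E$ and to use shrunk versions of the elements of $\cD$ to cover a neighborhood of $Y \setminus \mathbf E$, relying on the hypothesis to control the order in the transition region. Since $Y$ is a metric space and $Y \setminus \mathbf E$ and $Y \setminus \mathbf D$ are disjoint closed subsets, I would begin by choosing a continuous function $\phi \colon Y \to [0,1]$ with $\phi^{-1}(0) = Y \setminus \mathbf E$ and $\phi^{-1}(1) = Y \setminus \mathbf D$, for example
\[ \phi(y) \df \frac{d(y, Y \setminus \mathbf E)}{d(y, Y \setminus \mathbf E) + d(y, Y \setminus \mathbf D)}. \]
Setting $V \df \{\phi < 1/2\}$ (an open neighborhood of $Y \setminus \mathbf E$ contained in $\mathbf D$) and $\tilde D \df D \cap V$ for each $D \in \cD$, the candidate cover $\cY_0 \df \{\tilde D\}_{D \in \cD} \cup \cE$ is an open cover of $Y$, refines $\cD \cup \cE$, and satisfies the required mesh bound.

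The order bound holds immediately at points with $\phi(y) \ge 1/2$ (only $\cE$-elements contribute, giving order $\le \order(\cE)$) and at points with $\phi(y) = 0$ (only $\tilde D$-elements contribute, giving order $\le \order(\cD)$). The delicate region is the transition zone $\{0 < \phi < 1/2\} \subset \mathbf D \cap \mathbf E$, where naively both families contribute, potentially giving order as large as $\order(\cD) + \order(\cE)$. Here the hypothesis enters crucially: for each $D \in \cD$ and each connected component $K$ of $D \cap \mathbf E$ (a connected subset of $\mathbf D \cap \mathbf E$ contained in $D$), there exists $E^D_K \in \cE$ with $K \subset E^D_K$. The plan is to further modify $\tilde D$ by removing a closed set $F_D \subset \tilde D \cap \mathbf E$ which stays away from the closure of $D \cap (Y \setminus \mathbf E)$, the removal being organized according to the component partition of $D \cap \mathbf E$, so that each $\tilde D$-contribution in the transition zone is effectively absorbed into the single $\cE$-element $E^D_K$ already covering the relevant $K$.

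The main obstacle will be organizing these component-wise removals consistently across different $D \in \cD$ to ensure that the order at a transition point is genuinely bounded by $\max(\order(\cD) + 1, \order(\cE))$ rather than the naive sum, with the ``$+1$'' accounting for the boundary collar that is necessary to keep the modified $\tilde D$ an open neighborhood of $D \cap (Y \setminus \mathbf E)$. I expect this will require a paracompactness-based shrinking argument, quite possibly combined with an application of Lemma~\ref{lem: for p2} to subfamilies of $\cD$ indexed by connected components of $\{0 < \phi < 1/2\}$, where the disjointification produced by that lemma is exactly what controls the extra count in the $\cD$ direction. Once this order estimate is in place, coverage of $Y$, the refinement property $\cY$ refines $\cD \cup \cE$, and the mesh bound $\mesh(\cY) \le \max(\mesh(\cD), \mesh(\cE))$ all follow from the construction by inspection.
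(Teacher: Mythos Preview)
Your setup with the Urysohn-type function $\phi$ and the recognition that the transition zone $\{0<\phi<1/2\}$ is the only delicate region are both on target, and mirror the paper's opening move. But your proposed resolution has a genuine gap: removing closed pieces $F_D$ from each $\tilde D$ and ``absorbing'' them into the designated $E^D_K$ does nothing to control the total count at a transition point. A point $y$ with $0<\phi(y)<1/2$ may lie in up to $\order(\cD)$ of the sets $\tilde D$ and simultaneously in up to $\order(\cE)$ of the sets in $\cE$; shrinking the $\tilde D$'s only pushes this bad zone toward $\phi=0$ without eliminating it, and the $\cE$-elements you are absorbing into were already being counted. The appeal to Lemma~\ref{lem: for p2} does not help here, since that lemma produces \emph{disjoint} sets inside a family whose pairwise intersections lie in a fixed $Z$, whereas your $\tilde D$'s overlap freely throughout the transition zone.

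The paper's proof supplies the missing idea, and it is of a different nature. With $\order(\cD)=n+1$, one takes a canonical map $\pi:\mathbf D\to \nerve(\cD)$, so that $\nerve(\cD)$ is an $n$-dimensional complex. On the transition slab $Y'=\phi^{-1}([1/3,2/3])$ one forms $g=(\pi,\phi):Y'\to \nerve(\cD)\times[\text{interval}]$, a space of dimension $n+1$, and pulls back a cover of order $\le n+2$ whose projections refine a fixed refinement of $\nerve(\cD)$ and have small mesh in the interval direction. The connected components $\cY'$ of this pullback then refine $\cD$ by construction, and refine $\cE$ by the connectedness hypothesis; crucially, $\order(\cY')\le n+2=\order(\cD)+1$. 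The final cover is obtained by restricting $\cD$ to $\phi^{-1}[0,1/3)$, restricting $\cE$ to $\phi^{-1}(2/3,1]$, using $\cY'$ in between, and gluing across the seams $\phi^{-1}(1/3)$ and $\phi^{-1}(2/3)$ by a ``marking'' trick (each $\Omega\in\cY'$ touching a seam is merged into a single chosen $D$ or $E$ that contains it). The ``$+1$'' you were trying to account for is exactly the extra dimension coming from the interval factor; it is a dimension-theory fact, not a shrinking/disjointification fact.
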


\begin{proof} 
Let 
$\order ({\mathcal D})=n+1$, let $X \df \nerve ({\mathcal D})$, and  let $\pi : \mathbf{D} \to X$
be a canonical map. Take an open cover of ${\mathcal X}$ of $X$ such that
$\order ({\mathcal X})\leq n+1$ and $\pi^{-1}({\mathcal X})$ refines ${\mathcal D}$.
Let $f : Y \to [0,1]$ be a continuous map such that
$f|_{Y \smallsetminus \mathbf{E}} \equiv 0$ and $f|_{Y\smallsetminus
  \mathbf{D}} \equiv 1$. Set
 $Y' \df f^{-1} \left( \left[\frac{1}{3},\frac{2}{3} \right ]\right )$
and 
$$g : Y' \to Z \df X\times \left [\frac{1}{2},\frac{2}{3} \right], \ \ \
g(y) \df (\pi(y), f(y)).$$
Since $\dim Z \leq n+1$ there is an open cover $\mathcal Z$ of $Z$
such that 
$\order ({\mathcal Z}) \leq n+2$,
the projection of  $\mathcal Z$ to $X$ refines ${\mathcal X}$
and the projection of $\mathcal Z$ to 
$\left[\frac{1}{2},\frac{2}{3} \right]$ is of $\mesh < 1/3$.
Let ${\mathcal Y}'$ denote the collection of connected components of
sets $\{g^{-1}(W): W \in \mathcal{Z}\}$. By construction $\cY'$
refines $\mathcal D$. Also, since the sets in $\cY'$ are connected and contained in
$\mathbf{D}\cap \mathbf{E}$ the assumption of the Lemma implies that $\cY'$ also refines $\cE$. 
Moreover  $\order ({\mathcal Y}')\leq n+2$
and no element of ${\mathcal Y}'$ meets both $f^{-1} \left(\frac{1}{3}
  \right)$ and
$f^{-1}\left(\frac{2}{3} \right)$.
For every $\Omega \in {\mathcal Y}'$ 
which intersects $f^{-1} \left(\frac{1}{3} \right)$, there is an
element $D \in \cD$ such that $\Omega \subset D$. We choose one such
$D$ and say that {\em $D$ marks $\Omega$}. Similarly if $\Omega$
intersects $f^{-1} \left( \frac{2}{3} \right)$ there is $E \in
\cE$ so that $\Omega \subset E$, we choose one such $E$ and say
that {\em $E$ marks $\Omega$}. We now modify elements of $\cD$ and
$\cE$: for each element $ D \in \mathcal D$, define 
$$
\tilde{D} \df \left( D \cap f^{-1} \left ( \left[0,1/3\right) \right) \right)\cup \bigcup_{D\text{ marks } \Omega} \Omega.
$$
Similarly we modify elements of $\cE$, defining 
$$
\tilde{E} \df \left( E \cap f^{-1} \left( \left( 2/3,1 \right]
  \right) \right) \cup \bigcup_{E \text{ marks } \Omega} \Omega.
$$
We refer to $\tilde{D}, \tilde{E}$ as {\em modified elements} of $\cD, \cE$. Finally define
$\mathcal Y$ as the collection of modified elements of 
${\mathcal D}$ and $\cE$ and the elements of ${\mathcal Y}'$
which do not meet 
$f^{-1} \left( \frac13 \right) $ or $ f^{-1} \left( \frac23 \right)
$. 
It is easy to see that $\mathcal X$ has the required properties. 
\end{proof}

\begin{lemma}\Name{lemma0036}
Let $Y$ be a metric space and let $\mathbf{D},\mathbf{E}_i, i\in
\crly{I}$ be open subsets which cover $Y$. Assume 
that the $\mathbf{E}_i$'s are disjoint, connected, and are uniformly
of $\on{asdim} \le \ell -1$.
Let $\cD$ be an open cover of $\mathbf{D}$ which is of bounded mesh
and $\on{ord}\cD\le \ell$. 
Then $Y$ has an open cover $\cY$ which refines the cover
$\cD\cup\set{\mathbf{E}_i:i\in\crly{I}}$, 
is of bounded
mesh and $\on{ord}\cY \le \ell+1$.
\end{lemma}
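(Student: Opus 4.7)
The plan is to combine the given cover $\cD$ of $\mathbf{D}$ with appropriately chosen covers of the $\mathbf{E}_i$'s produced from the uniform asymptotic dimension hypothesis, and then glue them together using Lemma~\ref{p1}.

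First, I would invoke the assumption that $\{\mathbf{E}_i\}_{i\in\crly{I}}$ is uniformly of $\asdim\le \ell-1$ with parameter $r\df \mesh(\cD)$, obtaining $R>0$ and, for each $i\in\crly{I}$, an open cover $\cE_i$ of $\mathbf{E}_i$ with $\mesh(\cE_i)\le R$, $\order(\cE_i)\le \ell$, and $\Lb(\cE_i)> \mesh(\cD)$. Setting $\mathbf{E}\df\bigcup_i \mathbf{E}_i$ and $\cE\df\bigcup_i \cE_i$, the disjointness of the $\mathbf{E}_i$'s (together with the fact that each element of $\cE_i$ lies inside $\mathbf{E}_i$) immediately yields $\mesh(\cE)\le R$ and $\order(\cE)\le \ell$.

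Next, I would apply Lemma~\ref{p1} to $Y=\mathbf{D}\cup\mathbf{E}$ with the covers $\cD$ and $\cE$. The one thing to verify is the connectedness hypothesis: given a connected $C\subset\mathbf{D}\cap\mathbf{E}$ contained in some $D\in\cD$, I need $C$ to lie in some element of $\cE$. Since $\mathbf{E}=\bigsqcup_i\mathbf{E}_i$ is a disjoint union of open sets and $C$ is connected, $C\subset\mathbf{E}_{i_0}$ for a single $i_0$. Picking any $x\in C$ and using $\diam(C)\le\mesh(\cD)$, we have $C\subset B(x,\mesh(\cD))\cap\mathbf{E}_{i_0}$, and since $\Lb(\cE_{i_0})>\mesh(\cD)$, this ball (in the intrinsic metric of $\mathbf{E}_{i_0}$) is contained in some $E\in\cE_{i_0}\subset\cE$, as required.

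The output of Lemma~\ref{p1} is then an open cover $\cY$ of $Y$ refining $\cD\cup\cE$ (and hence refining $\cD\cup\{\mathbf{E}_i:i\in\crly{I}\}$, since every element of $\cE_i$ is contained in $\mathbf{E}_i$), with $\mesh(\cY)\le \max(\mesh(\cD),R)$ and
\[
\order(\cY)\le \max(\order(\cD)+1,\order(\cE))\le \max(\ell+1,\ell)=\ell+1,
\]
which gives the conclusion of the lemma. The only conceptual point to be careful about is the interpretation of the Lebesgue number in Definition~\ref{def: asdim}: since the covers $\cE_i$ are covers of the subspaces $\mathbf{E}_i$, balls are understood in the intrinsic metric, i.e.\ of the form $B(x,r)\cap\mathbf{E}_i$, which is precisely what is needed to contain the connected subset $C$ above. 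No deeper obstacle should arise; the role of this lemma is essentially bookkeeping on top of the gluing Lemma~\ref{p1}.
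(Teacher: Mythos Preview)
Your proposal is correct and follows essentially the same route as the paper: invoke the uniform asdim hypothesis with $r=\mesh(\cD)$ to produce covers $\cE_i$, assemble them into a cover $\cE$ of $\mathbf{E}=\bigcup_i\mathbf{E}_i$, verify the hypotheses of Lemma~\ref{p1} (in particular the connected-subset condition via the Lebesgue number inequality), and read off the conclusion. Your bookkeeping is in fact slightly tighter than the paper's (you note $\order(\cE)\le\ell$ rather than $\ell+1$, and you correctly observe that only disjointness and openness of the $\mathbf{E}_i$, not their connectedness, is used to place $C$ inside a single $\mathbf{E}_{i_0}$).
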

\begin{proof}
Using the assumption that $\mathbf{E}_i$ is uniformly of  $\on{asdim} \le \ell-1$ we find 
an open cover $\cE_i$ of $\mathbf{E}_i$ which is of uniformly bounded
mesh, such that $\on{ord}\cE_i\le \ell$ and
$\on{Leb} \cE_i> \on{mesh} \cD$. 
Let
$\mathbf{E}\defi\bigcup_{\crly{I}} \mathbf{E}_i$ and let  
$$\cE\defi\bigcup_{i \in \crly{I}}
\{\mathbf{E}_i \cap U: U \in \cE_i\}.
$$

Clearly it suffices to verify that the hypotheses of 
Lemma~\ref{p1}
are satisfied. Indeed, 
by assumption the cover $\cD$ is of bounded $\mesh$ and order, and 
$\cE$ is of bounded mesh because of the
uniform bound on $\mesh(\cE_i)$. We  also have that
$\order \cE\le \ell+1$ because of the bounds $\order \cE_i\le\ell+1$ 
and the fact that the $\mathbf{E}_i$ are disjoint. 
For 
the last condition, 
let a connected subset
$C\subset \mathbf{D}\cap\mathbf{E}$ which is contained in an element  
of $\cD$ be given. By the connectedness and disjointness of the
$\mathbf{E}_i$'s we conclude that there exists $i$ with  
$C\subset \mathbf{E}_i$. Because $\on{Leb} \cE_i>\mesh \cD$ we deduce
that  since $C$ is contained in an element of $\cD$ it must be  
contained in an element of $\cE_i$ and in turn, as it is contained in
$\mathbf{E}_i$, it must be contained in an element of  
$\cE$. 
%
\end{proof}

 \begin{proof}[Proof of Proposition \ref{p2}.] 
Proceeding inductively
in reverse order, for $k=m, \ldots, 1$ 
 we will construct 
 a uniformly bounded open cover ${\mathcal A}^k $ of $[{\mathcal
   A}]^k$ 
 such that
 $\order ({\mathcal A}^k) \leq m+1-k$ and 
  ${\mathcal A}^k$ refines the restriction of $\mathcal A$ to
  $[{\mathcal A}]^k$. The construction is  
 obvious for $k =m$. Namely, our hypothesis and Definition \ref{def:
   asdim} with $n=m-k=0$ mean that 
$[\mathcal{A}]^m$ has a cover of bounded mesh and  order 1, that is,
we can just  set ${\mathcal A}^{m}$ to be
 the connected components of $[{\mathcal A}]^{m}$. 
Assume that the construction
 is completed for $k+1$ and proceed to $k$ as follows. First notice
 that for two distinct $k$-intersections $A$ and $A'$ of $\mathcal A$
 the complements $A\smallsetminus [{\mathcal A}]^{k+1}$
 and $A' \smallsetminus [{\mathcal A}]^{k+1}$ are disjoint.
By Lemma \ref{lem: for p2}, 
we can cover $[{\mathcal A}]^k\smallsetminus[{\mathcal A}]^{k+1}$ by 
a collection $\set{\mathbf{E}_i:i\in\crly{I}}$ of disjoint connected open sets such that every
$\mathbf{E}_i$ is contained in a $k$-intersection of $\mathcal A$. In particular, the collection
$\set{\mathbf{E}_i:i\in\crly{I}}$ is uniformly of asdim $\le m-k$. We can therefore apply Lemma~\ref{lemma0036}
with the choices $Y=\br{\cA}^k, \mathbf{D}=\br{\cA}^{k+1}, \cD=\cA^{k+1}$, the collection $\set{\mathbf{E}_i:i\in\crly{I}}$, and
$\ell=m-k$, and obtain an open cover $\cY$ of $\br{\cA}^k$ of order $\le m-k+1$ that refines $\cD\cup\set{\mathbf{E}_i:i\in\crly{I}}$ and 
in particular, refines $\cA|_Y$. This completes the inductive step.

\end{proof}

\begin{proof}[Proofs of Proposition \ref{p3}.] 
For every $A \in \mathcal A$ choose
 a vertex $v^A_i$ of $\Delta_i$ so that $p_i(A)$ does not intersect 
 the face of $\Delta_i$ opposite to $v^A_i$. Let $Y \df \nerve(\cA)$
 and let $f : X \to X$ be the
 composition of 
a canonical map  $ X \to Y$ and a map 
$Y\to X$
which is  linear on each simplex of $Y$
and sends the vertex of $Y$ related to $A \in \mathcal A$
to the point $(v_1^A, v_2^A) \in X$. 
Take a point $x \in \partial \Delta_1 \times \Delta_2$. Then 
$p_1(x)$ belongs to a face $\Delta'_1$ of $\Delta_1$ and hence
for every $A \in \mathcal A$ containing $x$ we have that $v^A_1 \in \Delta'_1$.
Thus both $x$ and $f(x)$ belong to $\Delta'_1 \times \Delta_2$.
 Applying the same argument to
 $\Delta_1 \times \partial \Delta_2$ we get that
 the boundary $\partial X$ is invariant under $f$ and $f$ restricted
 to $\partial X$ is homotopic to the identity map of $\partial X$.
 If $\order ( {\mathcal A}) \leq \dim \Delta_1 + \dim \Delta_2$ then
  $\dim Y \leq \dim X -1$ and hence
 there is an interior point $a$ of $X$ not covered by $f(X)$. Take 
 a retraction $r : X \smallsetminus \{ a\} \to \partial X$. Then
 the identity map of $\partial X$ factors up to homotopy through 
 the contractible space $X$ which contradicts the non-triviality
 of the reduced homology of $\partial X$. 
\end{proof}

\bibliographystyle{alpha}
\def\cprime{$'$} \def\cprime{$'$} \def\cprime{$'$}
\begin{bibdiv}
\begin{biblist}

\bib{brenner}{article}{
      author={Brenner, Joel},
      author={Cummings, Larry},
       title={The {H}adamard maximum determinant problem},
        date={1972},
        ISSN={0002-9890},
     journal={Amer. Math. Monthly},
      volume={79},
       pages={626\ndash 630},
      review={\MR{0301030 (46 \#190)}},
}

\bib{BirchSD}{article}{
      author={Birch, B.~J.},
      author={Swinnerton-Dyer, H. P.~F.},
       title={On the inhomogeneous minimum of the product of {$n$} linear
  forms},
        date={1956},
        ISSN={0025-5793},
     journal={Mathematika},
      volume={3},
       pages={25\ndash 39},
      review={\MR{0079049 (18,22a)}},
}

\bib{mathieu2}{unpublished}{
      author={Dutour-Sikiri\'c, Mathieu},
       title={Enumeration of inhomogeneous perfect forms},
        date={2013},
        note={Manuscript in preparation},
}

\bib{mathieu}{article}{
      author={Dutour~Sikiri\'c, Mathieu},
      author={Sch{\"u}rmann, Achill},
      author={Vallentin, Frank},
       title={Inhomogeneous extreme forms},
        date={2012},
     journal={Ann. Inst. Fourier (Grenoble)},
}

\bib{Grayson}{article}{
      author={Grayson, Daniel~R.},
       title={Reduction theory using semistability},
        date={1984},
        ISSN={0010-2571},
     journal={Comment. Math. Helv.},
      volume={59},
      number={4},
       pages={600\ndash 634},
         url={http://dx.doi.org/10.1007/BF02566369},
      review={\MR{780079 (86h:22018)}},
}

\bib{hans-gill1}{article}{
      author={Hans-Gill, R.~J.},
      author={Raka, Madhu},
      author={Sehmi, Ranjeet},
       title={On conjectures of {M}inkowski and {W}oods for {$n=7$}},
        date={2009},
        ISSN={0022-314X},
     journal={J. Number Theory},
      volume={129},
      number={5},
       pages={1011\ndash 1033},
         url={http://dx.doi.org/10.1016/j.jnt.2008.10.020},
      review={\MR{2516969 (2010i:11099)}},
}

\bib{hans-gill2}{article}{
      author={Hans-Gill, R.~J.},
      author={Raka, Madhu},
      author={Sehmi, Ranjeet},
       title={On conjectures of {M}inkowski and {W}oods for {$n=8$}},
        date={2011},
        ISSN={0065-1036},
     journal={Acta Arith.},
      volume={147},
      number={4},
       pages={337\ndash 385},
         url={http://dx.doi.org/10.4064/aa147-4-3},
      review={\MR{2776094 (2012b:11106)}},
}

\bib{Harder}{article}{
      author={Harder, G.},
      author={Narasimhan, M.~S.},
       title={On the cohomology groups of moduli spaces of vector bundles on
  curves},
        date={1974/75},
        ISSN={0025-5831},
     journal={Math. Ann.},
      volume={212},
       pages={215\ndash 248},
      review={\MR{0364254 (51 \#509)}},
}

\bib{McMullenMinkowski}{article}{
      author={McMullen, Curtis~T.},
       title={Minkowski's conjecture, well-rounded lattices and topological
  dimension},
        date={2005},
        ISSN={0894-0347},
     journal={J. Amer. Math. Soc.},
      volume={18},
      number={3},
       pages={711\ndash 734 (electronic)},
         url={http://dx.doi.org/10.1090/S0894-0347-05-00483-2},
      review={\MR{2138142 (2006a:11086)}},
}

\bib{PS}{unpublished}{
      author={Pettet, Alexandra},
      author={Suoto, Juan},
       title={Periodic maximal flats are not peripheral},
        note={preprint},
}

\bib{Raghunathans_book}{book}{
      author={Raghunathan, M.~S.},
       title={Discrete subgroups of {L}ie groups},
   publisher={Springer-Verlag},
     address={New York},
        date={1972},
        note={Ergebnisse der Mathematik und ihrer Grenzgebiete, Band 68},
      review={\MR{0507234 (58 \#22394a)}},
}

\bib{Ramharter_conjecture}{article}{
      author={Ramharter, G.},
       title={On the densities of certain lattice packings by parallelepipeds},
        date={2000},
        ISSN={0236-5294},
     journal={Acta Math. Hungar.},
      volume={88},
      number={4},
       pages={331\ndash 340},
         url={http://dx.doi.org/10.1023/A:1026732208510},
      review={\MR{1789045 (2001k:11122)}},
}

\bib{Ramharter_dim3}{article}{
      author={Ramharter, G.},
       title={On {M}ordell's inverse problem in dimension three},
        date={1996},
        ISSN={0022-314X},
     journal={J. Number Theory},
      volume={58},
      number={2},
       pages={388\ndash 415},
         url={http://dx.doi.org/10.1006/jnth.1996.0083},
      review={\MR{1393622 (97h:11070)}},
}

\bib{leetika}{unpublished}{
      author={Raka, Madhu},
      author={Leetika},
       title={in prepration},
}

\bib{SiegelFormula}{article}{
      author={Siegel, Carl~Ludwig},
       title={A mean value theorem in geometry of numbers},
        date={1945},
        ISSN={0003-486X},
     journal={Ann. of Math. (2)},
      volume={46},
       pages={340\ndash 347},
      review={\MR{0012093 (6,257b)}},
}

\bib{Stuhler}{article}{
      author={Stuhler, Ulrich},
       title={Eine {B}emerkung zur {R}eduktionstheorie quadratischer {F}ormen},
        date={1976},
        ISSN={0003-889X},
     journal={Arch. Math. (Basel)},
      volume={27},
      number={6},
       pages={604\ndash 610},
      review={\MR{0424707 (54 \#12666)}},
}

\bib{gruber}{unpublished}{
      author={Shapira, Uri},
      author={Weiss, Barak},
       title={On the mordell-gruber spectrum},
        note={Preprint, available on arXiv at arxiv.org/pdf/1207.6343},
}

\bib{Thunder}{article}{
      author={Thunder, Jeffrey~Lin},
       title={Higher-dimensional analogs of hermite's constant.},
        date={1998},
     journal={The Michigan Mathematical Journal},
      volume={45},
      number={2},
       pages={301\ndash 314},
}

\bib{TW}{article}{
      author={Tomanov, George},
      author={Weiss, Barak},
       title={Closed orbits for actions of maximal tori on homogeneous spaces},
        date={2003},
        ISSN={0012-7094},
     journal={Duke Math. J.},
      volume={119},
      number={2},
       pages={367\ndash 392},
         url={http://dx.doi.org/10.1215/S0012-7094-03-11926-2},
      review={\MR{1997950 (2004g:22006)}},
}

\bib{Weil}{book}{
      author={Weil, Andr{\'e}},
       title={Adeles and algebraic groups},
      series={Progress in Mathematics},
   publisher={Birkh\"auser Boston},
     address={Mass.},
        date={1982},
      volume={23},
        ISBN={3-7643-3092-9},
        note={With appendices by M. Demazure and Takashi Ono},
      review={\MR{670072 (83m:10032)}},
}

\bib{wiki_hadamard}{unpublished}{
      author={Wikipedia},
       title={Hadamard determinant problem},
}

\bib{Woods_n=4}{article}{
      author={Woods, A.~C.},
       title={The densest double lattice packing of four-spheres},
        date={1965},
        ISSN={0025-5793},
     journal={Mathematika},
      volume={12},
       pages={138\ndash 142},
      review={\MR{0193067 (33 \#1289a)}},
}

\end{biblist}
\end{bibdiv}

\end{document}